\numberwithin{equation}{section}
\theoremstyle{plain}  
\newtheorem{thm}[equation]{Theorem}
\newtheorem{lemma}[equation]{Lemma}
\theoremstyle{definition}  
\newtheorem{defn}[equation]{Definition}
\newtheorem{remark}[equation]{Remark}
\newtheorem{fact}[equation]{Fact}
\newcommand{\ra}{\rightarrow}
\newcommand{\lra}{\longrightarrow}
\newcommand{\R}{\mathbb R}
\newcommand{\PP}{\mathbb P}
\newcommand{\pontr}{\hat{p}_1}
\newcommand{\Z}{\mathbb Z}
\newcommand{\C}{\mathbb C}
\newcommand{\cp}{\C P^\infty}
\newcommand{\Zp}{\Z_{(2)}}
\newcommand{\Zq}{\Z/(2)}
\newcommand{\Zqq}{\Z/(2^q)}
\newcommand{\uhat}{\hat{u}}
\newcommand{\vhat}{\hat{v}}
 \newcommand{\Smash} {\wedge}
\thanks{
The third author would like to thank the Academy of Mathematics and Systems
Science, part of the Chinese Academy of Sciences, for its hospitality and
support during part of the research for this paper.
}
\thanks{
The authors thank the referee for numerous suggestions to improve the exposition.
}
\thanks{
The first author is supported in part by the NSF through grant DMS 1307875.
}
\subjclass[2010]{55N20,55N91,55P20,55T25}
\begin{document}

\title{The $ER(2)$-cohomology of $B\Zqq$ and $\C \PP^n$}

\author{Nitu Kitchloo}
\author{Vitaly Lorman}
\author{W. Stephen Wilson}
\address{Department of Mathematics, Johns Hopkins University, Baltimore, USA}
\email{nitu@math.jhu.edu}
\email{vlorman@math.jhu.edu}
\email{wsw@math.jhu.edu}

\begin{abstract}
The $ER(2)$-cohomology of $B\Zqq$ and $\C \PP^n$ are computed along with
the Atiyah-Hirzebruch spectral sequence for $ER(2)^*(\C \PP^\infty)$.
This, along with other papers in this series, gives 
us the $ER(2)$-cohomology of all Eilenberg-MacLane spaces. 
\end{abstract}

\date{\today}

\maketitle

\section{Introduction}

We will be concerned with only two cohomology theories in this paper.
All our work is at the prime 2.
First, we have the Johnson-Wilson theory, $E(2)^*(-)$, 
introduced in \cite[Remark 5.13]{JW2}, 
with coefficients $E(2)^* $ $=  \Zp [v_1 , v_2^{\pm 1}]$
where the degree of $v_1$ is -2 and the degree of $v_2$ is -6.

Second, 
the Real Johnson-Wilson theory, 
$ER(2)^*(-)$, from 
\cite[Theorem 1.7]{Nitufib} and \cite[Theorem 4.1]{HK},
is
the main theory of interest. 
The theory $E(2)$ is complex orientable and it inherits 
a $\Zq$-action from complex conjugation on $MU$, the spectrum for complex cobordism.
The theory 
$ER(2)$ is
the homotopy fixed points of the spectrum
$E(2)$ under this action and is
just the $n=2$ analog of $ER(1) = KO_{(2)}$.

This paper is part of a series of papers developing the generalized
cohomology theory, $ER(2)^*(-)$, (and often $ER(n)^*(-)$), as a working tool
for algebraic topologists.  Interest in this comes from two 
directions.  First, there is a close connection between $ER(2)$ and
$TMF_0(3)$ (see \cite[Corollary 4.17]{HM}), and second, $ER(2)$ has already 
proven useful in applications,
particularly to non-immersions of real projective spaces, for example 
in \cite[Theorem 1.9]{NituP},
\cite[Theorem 1.4]{NituP2}, and \cite[Theorem 4.1]{Romie}.

A great deal is known about $ER(2)$ already. In particular, we know
the homology of the Omega spectrum for $ER(2)$, \cite[Theorem 1.2 and
Section 2]{NituER2}, and
the homotopy type of the spaces in the Omega spectrum, 
\cite[Theorems 1-4 and 1-6 and related discussion]{Kitch-Wil-split}.
For most $n$, $ER(2)^*(\R\PP^n)$ has been computed,
\cite[Theorems 13.2 and 13.3 for $n$ even]{NituP},
\cite[Theorem 8.2 for $n=16k+1$]{NituP2}, and \cite[Theorem 3.1 for $n=16k+9$]{Romie}.
We also know $ER(n)^*(BO(q))$, 
\cite[Theorem 1.1]{Kitch-Wil-BO}.

It is hard to put the results of this paper into proper context
because the context is constantly expanding.  
This paper is part of a much larger project of developing the
computability and applicability of $ER(n)^*(-)$.  
Computing
\begin{equation*}
ER(n)^*(K(\Zqq,j)) \text{ and }
ER(n)^*(K(\Z,j+1)) 
\end{equation*}
are long term goals.
In \cite[Theorem 1.3]{Kitch-Lor-Wil-ERn}, the $ER(n)$-cohomology is computed
for 
\begin{equation*}
K(\Z,2k+1), K(\Zqq,2k), \text{ and } K(\Zq,2k+1).
\end{equation*}
This paper can be seen as the first attack on the odd $K(\Zqq,2k+1)$
cases, which are considerably more complicated than the even ones.

In the case of $ER(2)$, we know 
\begin{equation*}
ER(2)^*(K(\Zqq,j)) = 0 = ER(2)^*(\Z,j+1) \quad \text{ for } j > 2.
\end{equation*}
In \cite[Theorem 1.3]{Kitch-Lor-Wil-ERn}, we compute
\begin{equation*}
ER(2)^*(K(\Zqq,2)) \quad \text{and} \quad ER(2)^*(\Z,3). 
\end{equation*}
This paper will be devoted to computing 
\begin{equation*}
ER(2)^*(
B \Zqq = K(\Zqq,1) )
\quad
\text{and}
\quad
ER(2)^*(
\C \PP^n). 
\end{equation*}
Together with the second author's computation of 
\[
ER(2)^*(
\C \PP) = ER(2)^*(K(\Z,2)),
\]
this completely solves the problem of computing $ER(2)^*(-)$
for Eilenberg-MacLane spaces.

Numerous other examples are given in \cite{Kitch-Lor-Wil-ERn}.
Somewhere in this mix
is a computation, 
\cite{Kitch-Lor-Wil-BUn},
of 
\begin{equation*}
ER(2)^*(BU(q))
\quad
\text{and} 
\quad
ER(2)^*(\Smash^n \C \PP^\infty).
\end{equation*}

The main tool we use is the
stable cofibration from \cite[Display 1.1]{Nitufib}:
\begin{equation}
\label{stexcp}
\xymatrix{
\Sigma^{17} ER(2)  \ar@{->}[r]^-{x} & ER(2) \ar[r] & E(2)
}
\end{equation}
where $x \in ER(2)^{-17}$, $2x = 0$, and the second map
is the homotopy fixed point inclusion.
This gives rise to a Bockstein spectral sequence (BSS) 
with $E_1 = E(2)^*(X)$
and which collapses after $E_8$ because the self map above 
 has the property that $x^7 = 0$.
Here $x$ generalizes the class $\eta \in \pi_1(BO) = KO^{-1}$
where $\eta^3 = 0 = 2 \eta$.

Because we use a spectral sequence to compute most of our results, 
the answers are often stated in terms of associated graded objects.
In addition to computing the BSS for the spaces of interest, we
describe the Atiyah-Hirzebruch spectral sequence (AHSS) 
for $\C \PP^\infty$ and $\C \PP^n$.
Because there are maps of all of our spaces to $\C \PP^\infty$, the
complete description of $ER(2)^*(\C \PP^\infty)$ in \cite{Vitaly}
maps to our results and solves many of the extensions we leave alone.

The complete description of our results, with all the
various $x^i$-torsion, is somewhat lengthy, and will be presented
in Section \ref{statements}.  However, there are some results that
can be presented in a clean fashion and could be of the most
interest.  We state them here.

The coefficient ring $ER(2)^*$ has two special elements, $\vhat_1$
and $\vhat_2$, that map to $v_1 v_2^{-3}$ of degree 16
and $v_2^{-8}$ of degree 48 respectively 
in $E(2)^*$.  
The element $\vhat_2$ is the periodicity element in $ER(2)^*$ and
so $ER(2)$ is periodic of period 48.  The ring $ER(2)^*$ has a
lot of interesting structure (see appendix, Section \ref{appendix},
and \cite[Proposition 2.1]{NituER2}), 
but if we only look at elements
in degrees multiples of 16, it simplifies dramatically to:
\begin{equation*}
ER(2)^{16*} = \Zp [\vhat_1 , \vhat_2^{\pm 1} ].
\end{equation*}

Since $E(2)$ is complex orientable, we know that $E(2)^*(\C \PP^\infty)
= E(2)^*[[u]]$ with the degree of $u$ equal to 2.  If we define
$\uhat = v_2^3 u$ of degree -16, 
then 
\[
\pontr \lra 
\uhat c(\uhat) 
\]
where
$\uhat c(\uhat) \in 
E(2)^{-32}(\C \PP^\infty)$
and $\pontr \in ER(2)^{-32}(\C \PP^\infty)$, 
where $c$ comes from
complex conjugation and $\pontr$ is a modified first 
Pontryagin class.  
Modulo filtrations we use, $\pontr = - \uhat^2$.

The previously mentioned 
non-immersion results for real projective spaces 
obtained from $ER(2)$ 
came about by looking only at $ER(2)^{8*}(\R \PP^n)$.  There,
higher powers of a generating class existed than in $E(2)^*(-)$.
(\cite[Theorems 1.6 ]{NituP} and
(\cite[Theorems 1.1 ]{NituP2} for $n$ even,
\cite[Theorem 1.3 
]{NituP2}
for $n=16k+1$, and \cite[Theorem 3.2 
]{Romie}
for $n=16k+9$.
Something very similar happens here and can be extracted as
a reasonably presentable theorem.  
For any complex orientable cohomology theory we have
a first 
Pontryagin class  and its $k+1$-st power will be zero in $\C \PP^{2k}$
and $\C \PP^{2k+1}$. 
Because $ER(2)$ is not complex orientable, we do not have this
restriction and are often able to see higher powers of the first
Pontryagin class, which continues to exist for this theory.

\begin{thm}
\label{thmone}
With $ER(2)^{16*} = 
 \Zp [\vhat_1 , \vhat_2^{\pm 1} ]$,
then 
$ER(2)^{16*}(\C \PP^{8k+i}) = $
\[
\begin{array}{lcr}
ER(2)^{16*}[\pontr]
/(\pontr^{4k+1})
& \hspace{.5in} &
i=0 \\
ER(2)^{16*}[\pontr]
/(\pontr^{4k+2},2\pontr^{4k+1})
& \hspace{.5in} &
i=1 \\
ER(2)^{16*}[\pontr]
/(\pontr^{4k+3},2\pontr^{4k+2}, \vhat_1 \pontr^{4k+2})
& \hspace{.5in} &
i=2 \\
ER(2)^{16*}[\pontr]
/(\pontr^{4k+4}, 2\pontr^{4k+2}, 2\pontr^{4k+3}, \vhat_1 \pontr^{4k+2}, \vhat_1 \pontr^{4k+3})
& \hspace{.5in} &
i=3 \\
ER(2)^{16*}[\pontr]
/(\pontr^{4k+4},2\pontr^{4k+3}, \vhat_1 \pontr^{4k+3})
& \hspace{.5in} &
i=4 \\
ER(2)^{16*}[\pontr]
/(\pontr^{4k+4},2\pontr^{4k+3})
& \hspace{.5in} &
i=5 \\
ER(2)^{16*}[\pontr]
/(\pontr^{4k+4})
& \hspace{.5in} &
i=6 \\
ER(2)^{16*}[\pontr]
/(\pontr^{4k+4})
& \hspace{.5in} &
i=7 \\
\end{array}
\]
\[
ER(2)^{16*}(\C \PP^\infty) =  ER(2)^{16*}[[\pontr]]
\]
\end{thm}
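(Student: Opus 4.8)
\section*{Proof proposal}

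The plan is to run the $x$-Bockstein spectral sequence of \eqref{stexcp}, with $E_1 = E(2)^*(X)$, for $X = \C\PP^n$ and $X = \C\PP^\infty$, and to extract the answer only in degrees divisible by $16$, where the coefficients collapse to $ER(2)^{16*} = \Zp[\vhat_1,\vhat_2^{\pm 1}]$. Equivalently, I would work with the long exact sequence attached to \eqref{stexcp},
\[
\cdots \to ER(2)^{*+17}(X) \xrightarrow{\ x\ } ER(2)^{*}(X) \xrightarrow{\ g\ } E(2)^{*}(X) \xrightarrow{\ \partial\ } ER(2)^{*+18}(X) \to \cdots,
\]
recalling $E(2)^*(\C\PP^\infty) = E(2)^*[[u]]$ and $E(2)^*(\C\PP^n) = E(2)^*[u]/(u^{n+1})$. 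Writing $\uhat = v_2^3 u$ of degree $-16$, the monomials $\uhat^j$ are exactly the classes in degrees $\equiv 0 \pmod{16}$ once the invariant coefficient generators $\vhat_1 \mapsto v_1 v_2^{-3}$ and $\vhat_2 \mapsto v_2^{-8}$ are adjoined. Since $c$ fixes $v_1 v_2^{-3}$ and $v_2^{-8}$, the first task is to record $c(\uhat)$ from the formal group law and confirm that $\uhat c(\uhat)$, the image of $\pontr$ (equal to $-\uhat^2$ modulo filtration), generates the conjugation invariants in these degrees.

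For $\C\PP^\infty$ there is no truncation, and I would argue that $g$ is injective in degrees $16*$, i.e. there is no $x$-divisibility (hence no $x$-torsion) there: the classes $\vhat_1^a \vhat_2^b \pontr^c$ map to $\Zp$-linearly independent elements $(v_1 v_2^{-3})^a (v_2^{-8})^b (\uhat c(\uhat))^c$ of $E(2)^{16*}(\C\PP^\infty)$, so $ER(2)^{16*}[[\pontr]]$ injects into $ER(2)^{16*}(\C\PP^\infty)$. That this injection is an isomorphism, with the spectral sequence collapsing and leaving a free power series ring, is exactly the content of the complete computation of $ER(2)^*(\C\PP^\infty)$ in \cite{Vitaly} restricted to $16*$, which I would cite to settle the surjectivity and the multiplicative structure. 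This is the base case and supplies the generator $\pontr$ used for all finite $n$ through the restriction maps $\C\PP^n \hookrightarrow \C\PP^\infty$.

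The heart of the theorem is the finite case $n = 8k+i$. The relation $u^{n+1}=0$ gives $\uhat^{n+1}=0$, and because $\pontr^j \equiv (-1)^j \uhat^{2j}$ modulo filtration, the largest \emph{freely} surviving power is $\pontr^{4k+\lfloor i/2\rfloor}$; this already produces the leading relations, from $\pontr^{4k+1}$ ($i=0$) up to $\pontr^{4k+4}$ ($i=6,7$). The remaining, subtler classes are the extra powers of $\pontr$ that survive only as torsion, which exist precisely because $ER(2)$ is not complex orientable. These arise from the top cells $\uhat^{n}, \uhat^{n-1}, \dots$: classes there which are not conjugation invariant nonetheless produce, via $\partial$ (equivalently, via the higher Bockstein differentials), classes in the image of multiplication by $x$. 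Since $2x = 0$, every such $x$-divisible class is $2$-torsion, giving the relations $2\pontr^{j}=0$; and the precise level of $x$-divisibility at which each extra $\pontr^j$ is detected, combined with the relations tying $\vhat_1$ to powers of $x$ in the coefficient ring (\cite[Proposition 2.1]{NituER2} and the appendix, Section~\ref{appendix}), yields the further relations $\vhat_1 \pontr^{j}=0$. I would carry this out cell-by-cell for each residue $i = 0,\dots,7$, the $8$-fold dependence being the familiar period-$8$ behaviour of the conjugation action on the top cells, the same mechanism underlying the $\R\PP^n$ computations of \cite{NituP,NituP2,Romie}.

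The main obstacle is exactly this last analysis: determining, for each $i$, the exact $x$-filtration of the extra powers of $\pontr$ near the top cell, thereby separating the free classes, the purely $2$-torsion classes (filtration one, as in $i=1,5$), and the simultaneously $2$- and $\vhat_1$-torsion classes (higher filtration, as in $i=2,3,4$), while verifying that collapse at $E_8$ from $x^7=0$ leaves exactly the listed associated graded and no spurious survivors. This bookkeeping on the top handful of cells, together with the conjugation action there, is the delicate part. Finally, to upgrade the associated graded to the stated ring presentations I would resolve the multiplicative extensions by naturality of $\C\PP^n \to \C\PP^\infty$ against the now-known free structure on $\C\PP^\infty$; this pins down that the surviving classes are genuinely the powers $\pontr^j$ and that no hidden extensions disturb the relations.
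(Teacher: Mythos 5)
Your overall frame---run the Bockstein spectral sequence of (\ref{stexcp}), read off the free part of $ER(2)^{16*}(\C\PP^{8k+i})$ from the truncation $\uhat^{8k+i+1}=0$, and obtain the exotic powers of $\pontr$ as $x$-divisible (hence $2$-torsion) classes supported near the top cell---matches the paper's strategy in outline, and your injectivity argument for $\C\PP^\infty$ in degrees $16*$ is sound (though the parenthetical ``hence no $x$-torsion'' is false: $2\pontr$ is killed by $x$; what you need, and what holds, is that nothing in degree $16*$ is $x$-divisible). The problem is that the proposal stops exactly where the content of the theorem begins. You correctly flag ``determining, for each $i$, the exact $x$-filtration of the extra powers of $\pontr$ near the top cell'' as the main obstacle, but you offer no mechanism for resolving it, and naturality from $\C\PP^\infty$ cannot supply one: the relevant top-cell classes support differentials in the spectral sequence for $\C\PP^\infty$ and only become permanent cycles after truncation, so the $d_5$ and $d_7$ on the top cells of $\C\PP^n$ are not inherited.

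The paper's resolution is a specific detour you do not mention: it first computes the Atiyah--Hirzebruch spectral sequence for $\C\PP^\infty$ from the known Bockstein answer and proves (Theorem \ref{powers2}) that $\pontr^{4k+1}$, $\pontr^{4k+2}$, $\pontr^{4k+3}$ are represented by $x^2u^{8k+1}$, $x^4u^{8k+2}$, $x^6u^{8k+3}$, i.e.\ in the filtration of $u^{8k+j}$ rather than of $u^{8k+2j}$. This single fact is what makes the exotic powers survive the truncation to $\C\PP^{8k+i}$ for exactly the right residues $i$, and it simultaneously determines their annihilators: classes divisible by $x^4$ or $x^6$ are killed by $\vhat_1$ because $\vhat_1 x^3=0$ in $ER(2)^*$, whereas the classes in the $i=1,5$ cases are only $x^2$-divisible and $\vhat_1$ acts nontrivially on them. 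That is precisely the difference between the $(\pontr^{j},2\pontr^{j})$ relations and the $(\pontr^{j},2\pontr^{j},\vhat_1\pontr^{j})$ relations in the statement, and it is the content your ``cell-by-cell'' analysis would have to produce. The paper additionally needs the maps $\C\PP^{2j+1}\to S^{4j+2}$ and $\C\PP^{8k+2i}\to S^{16k+4i}$ to force certain top-cell permanent cycles, and the truncated AHSS again to decide between a $d_5$ and a $d_7$ in the ambiguous cases. Some substitute for Theorem \ref{powers2} (the paper notes that stunted spaces $\C\PP^{2j+1}/\C\PP^{2j-1}$ would also work) is indispensable; without it the eight cases cannot be separated, so the proposal has a genuine gap at its central step.
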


The BSS gives the $x^i$-torsion generators precisely, but we use a spectral 
sequence to compute the BSS, so the $x^i$-torsion generators we see are
really for the associated graded object for our 
auxiliary spectral sequence.  
The complete answer
for these spaces gets quite complicated, but to give some insight
here, we'll describe how the elements of $ER(2)^{16*}(-)$ are
related to the $x^i$-torsion.  We know that we can have at
maximum, $x^7$-torsion, but, in fact, our typical case has only
$x$, $x^3$, and $x^7$-torsion.  In two cases, out of eight, for $\C \PP^n$, we
get $x^5$-torsion generators.

\begin{remark}
The exotic higher powers of the first Pontryagin class all
go to zero in $E(2)^*(\C \PP^n)$. 
As such, they
are divisible
by $x$ and thus torsion elements
because of the long exact sequence,
(\ref{stexcp}),
that
gives the exact couple.
In particular, for the $i=2$ and $4$ cases, the elements
$\pontr^{4k+2}$ and $\pontr^{4k+3}$ are $x^4$ times an $x^7$-torsion
generator.  For the $i=3$ case, $\pontr^{4k+2}$ is $x^4$ times
an $x^7$-torsion generator and $\pontr^{4k+3}$ is $x^6$ times an
$x^7$-torsion generator.
For $i=1$ and $5$, the torsion classes, $\pontr^{4k+1}$ and $\pontr^{4k+3}$,
are $x^2$ times $x^5$-torsion generators.  On these classes,
$\vhat_1^j$ is non-zero.  They are all $x^2$ times $x^3$-torsion
generators.
\end{remark}

\begin{thm}
\label{thmtwo}
In the associated graded object used to compute  the reduced
$ER^{16*}(\C \PP^\infty)$,
we have the following $x^i$ torsion generators:
\[
\begin{array}{lcl}
x^1  & \hspace{1in} & (2,\vhat_1) ER(2)^{16*}[\pontr]\{\pontr\} \\
x^3  & \hspace{1in} & \Zq [ \vhat_2^{\pm 1}, \pontr ] \{\pontr^2\} \\
x^7  & \hspace{1in} & \Zq [ \vhat_2^{\pm 1} ] \{\pontr\} \\
\end{array}
\]
\end{thm}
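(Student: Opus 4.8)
The plan is to run the Bockstein spectral sequence (BSS) attached to the exact couple of the cofibration (\ref{stexcp}), whose $E_1$-page is $E(2)^*(\C\PP^\infty) = E(2)^*[[u]]$, and to read off the $x^i$-torsion from the lengths of the differentials: the theory is set up so that a class detected by a $d_r$ is an $x^r$-torsion generator. First I would record the conjugation action on $E_1$. Writing $\uhat = v_2^3 u$, conjugation sends $\uhat$ to $c(\uhat) = v_2^3\,[-1]_F(u)$, determined by the height-two formal group law, and the invariant classes $\uhat\,c(\uhat)$ (the image of the Pontryagin class $\pontr$), $\vhat_1$, and $\vhat_2$ all lie in the image of $ER(2)^*(\C\PP^\infty)$ and are therefore permanent cycles. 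These assemble the torsion-free part $ER(2)^{16*}[[\pontr]]$ of Theorem \ref{thmone}, so everything in the reduced theory beyond this must be $x$-power torsion, and the three lines of the statement organize it.

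Because $x$ has degree $-17$, the differential $d_r$ raises total degree by $1 + 17r$; thus $d_1$, $d_3$, $d_7$ carry a degree-$16*$ class into degrees $\equiv 2, 4, 8 \pmod{16}$, so each listed generator appears as a $16*$-source whose image lies outside $16*$. The first computation is $d_1$, the conjugation differential, which on $E_1$ is governed (up to the degree-shifting unit) by $y \mapsto y - c(y)$; evaluating it on the monomials $\uhat^j$ and passing to $E_2$ isolates the $x^1$-torsion. Combined with the relations in $ER(2)^*$ tying $x$ to $2$ and $\vhat_1$ (from \cite{NituER2} and the appendix), this demotes exactly the $2$-divisible and $\vhat_1$-divisible classes to $x^1$-torsion, giving the family $(2,\vhat_1)\,ER(2)^{16*}[\pontr]\{\pontr\}$.

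For the surviving higher differentials I would use that the BSS is multiplicative over $ER(2)^*$ and bootstrap from the known coefficient case $X = \mathrm{pt}$, propagating by the Leibniz rule from $\pontr$ and $\pontr^2$. The outcome should be a $d_3$ imposing $x^3$-torsion on the $\vhat_1$-free, mod-$2$ classes $\Zq[\vhat_2^{\pm 1},\pontr]\{\pontr^2\}$ and a $d_7$ imposing $x^7$-torsion on $\Zq[\vhat_2^{\pm 1}]\{\pontr\}$; one must also verify that $d_2, d_4, d_5, d_6$ vanish on the relevant classes, so that—unlike the finite $\C\PP^n$—no $x^5$-torsion occurs for $\C\PP^\infty$. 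Reading off $E_\infty$ then produces the three families exactly as stated.

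The hard part will be the determination of $d_3$ and $d_7$ together with the vanishing of the intermediate differentials: unlike $d_1$ these are not formal consequences of conjugation but encode the deeper multiplicative relations among $x$, $2$, $\vhat_1$, $\vhat_2$ in $ER(2)^*$. I expect to pin them down by combining multiplicativity with a comparison to the coefficient BSS and to the AHSS for $\C\PP^\infty$, and by matching the resulting associated graded against the clean answer of Theorem \ref{thmone} and the full computation in \cite{Vitaly}, which constrain the possible extensions and thereby force the differential lengths to be exactly $1$, $3$, and $7$.
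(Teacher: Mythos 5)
Your overall strategy --- the BSS of the cofibration (\ref{stexcp}) with $d_1$ given by conjugation and the higher differentials bootstrapped from the coefficients --- is indeed the paper's strategy, but there is a genuine gap exactly at the step you yourself flag as ``the hard part,'' and the devices you propose for closing it do not work inside this paper's logic. The paper computes each $d_r$ through an auxiliary filtration of $E(2)^*[[\uhat]]$ by powers of $\uhat$, and the crux is an extension problem that this filtration creates: after the second stage $d_{1,2}$ of the filtered $d_1$ kills $\vhat_1\uhat^2$, the element $\vhat_1$ acts as \emph{zero} on the associated graded $E_3=\Zq[v_2^{\pm2},\uhat^2]\{\uhat^2\}$, even though $\vhat_1\pontr\ne 0$ in $ER(2)^*(\C\PP^\infty)$. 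The paper resolves this by expanding $d_1(v_2^3\uhat)$ using the mod $2$ formal group law through $\uhat^4$, which forces the higher-filtration relation $\vhat_1\uhat^2=\vhat_2\uhat^4$ of equation (\ref{rel}). Only with that relation does the Leibniz computation $d_3(v_2^2\uhat^2)=\vhat_1v_2^{-4}\uhat^2=v_2^{-12}\uhat^4$ produce a nonzero, identifiable target, giving the $x^3$-torsion $\Zq[v_2^{\pm4},\uhat^2]\{\uhat^4\}$, i.e.\ your second line in degrees $16*$. Without it, propagating $d_3$ from the coefficient BSS lands on a class that is invisible in the associated graded and you can conclude nothing. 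Your two proposed substitutes are problematic: the AHSS for $\C\PP^\infty$ is computed in Section \ref{AHSS} \emph{from} the BSS, so comparing to it here is circular, and ``matching against \cite{Vitaly}'' imports the full answer rather than proving it.

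Two smaller points. You have source and target reversed: by definition the $x^r$-torsion generators are the \emph{targets} of $d_r$, so the listed degree-$16*$ classes are images of differentials whose sources sit in degrees $\equiv 14,\,12,\,8 \pmod{16}$; they are not $16*$-sources mapping outside $16*$ (a class supporting a nonzero differential dies and represents nothing). On the other hand, once $E_4=\Zq[v_2^{\pm4}]\{\uhat^2\}$ is in hand, the vanishing of $d_4,d_5,d_6$ is a clean degree count modulo $24$, and $d_7(v_2^4\uhat^2)=\vhat_2^2\uhat^2$ follows because $\uhat c(\uhat)$ is a permanent cycle, so that portion of your plan is sound and matches the paper.
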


We recall
$\uhat = u v_2^3$ and use
$\hat{F}$, the modified formal group law defined in the next section.
The well known result for $BP^*(-)$, \cite{Land:Flat}, implies
\begin{equation}
E(2)^*(B\Zqq)= E(2)^*[[\uhat]]/[2^q]_{\hat{F}}(\uhat).
\end{equation}
However, in the case of $q=1$, we also get, \cite[Theorem 3.2]{NituP}:
\begin{equation}
ER(2)^*(B\Zq)= ER(2)^*[[\uhat]]/[2]_{\hat{F}}(\uhat).
\end{equation}
The map $B\Zqq \ra B\Zq$
takes $\uhat$ to the
$[2^{q-1}](\uhat) $ sequence
in $E(2)$-cohomology
and $\uhat$ to $z$ (definition of $z$) in
$ER(2)$-cohomology.

For a ring $S$, the notation $S\{a,b\}$ stands for the free $S$-module
on generators $a$ and $b$.

\begin{thm}
\label{thmtwoo}

\ 

\begin{enumerate}
\item
There is a filtration on
$ER(2)^{16*}(B \Z/(2^q))  $
such that the associated graded object is:
\[
 ER(2)^{16*}[[\pontr]]/(2^q \pontr) \oplus  
ER(2)^{16*}[[\pontr]]/(2) \{2^{q-1}\uhat\}.
\]
where $z$ is represented by $2^{q-1} \uhat$.
\item
The elements $z$ and $\pontr$ generate $ER(2)^{16*}(B\Zqq)$, which
can be written in terms of $z \pontr^i$ and $\pontr^i$.
\item
The map 
$ER(2)^{8*}(B\Zqq) \rightarrow
E(2)^{8*}(B\Zqq)$
is an injection. 
\item
The 
extension problems for $2z$, $z^2$, and $2^q \pontr$, can
be solved in 
$E(2)^{16*}(B\Zqq)$
using the series for 
$[2^q](\uhat)$,
$[2^{q-1}](\uhat)$ 
and $c(\uhat)$.
\end{enumerate}
\end{thm}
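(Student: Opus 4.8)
The filtration in part (1) will be the $x$-adic filtration of the Bockstein spectral sequence (BSS) associated to (\ref{stexcp}), so the whole theorem amounts to running that BSS for $X = B\Z/(2^q)$ and extracting its behavior in degrees $16*$ and $8*$. The input is the Landweber-exact computation $E(2)^*(B\Z/(2^q)) = E(2)^*[[\uhat]]/[2^q]_{\hat{F}}(\uhat)$, regarded as the $E_1$-page, together with the complex-conjugation action $c$, which is what drives the differentials. The key observation is that the class represented by $\uhat\,c(\uhat)$, namely $\pontr$, is conjugation-invariant and hence a permanent cycle, whereas $\uhat$ itself is not invariant. Thus the surviving classes should be exactly the powers $\pontr^i$ together with the ``imaginary'' family $z\,\pontr^i$, where $z$ is represented by $2^{q-1}\uhat$, matching the two summands of the stated associated graded.

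First I would determine the $d_1$-differential from the action of $c$ on the truncated power-series ring, using that a class is a cycle precisely to the extent that it is real. The $2^q$-torsion on the $\pontr$-tower and the $2$-torsion on the $z$-tower should be forced by the interplay of $[2^q]_{\hat{F}}(\uhat)$ with its half $[2^{q-1}]_{\hat{F}}(\uhat)$ under conjugation: the relation $[2^q]_{\hat{F}}(\uhat)=0$ already pins the $2$-power torsion in $E(2)^*(B\Z/(2^q))$, and taking fixed points cuts the torsion on the imaginary part down to order $2$ while leaving order $2^q$ on the $\pontr$-tower. Since $x^7=0$, the BSS collapses by $E_8$, so only finitely many differentials occur and the $E_\infty$-page in degrees $16*$ must be $ER(2)^{16*}[[\pontr]]/(2^q\pontr)\oplus ER(2)^{16*}[[\pontr]]/(2)\{2^{q-1}\uhat\}$, which is part (1). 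Part (2) is then immediate: by convergence of the spectral sequence, generators of the associated graded lift to generators, so $\pontr$ and $z$ generate $ER(2)^{16*}(B\Z/(2^q))$ and every element is a combination of $\pontr^i$ and $z\,\pontr^i$.

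For part (3) I would show that the $E_\infty$-page in degrees $8*$ is concentrated in $x$-filtration zero: the surviving classes $\pontr^i$ and $z\,\pontr^i$ already live in $E(2)$-cohomology, i.e.\ in filtration zero, so no nonzero class of $ER(2)^{8*}(B\Z/(2^q))$ is divisible by $x$, which is precisely the injectivity of the map $j^*\colon ER(2)^{8*}(B\Z/(2^q))\to E(2)^{8*}(B\Z/(2^q))$ induced by (\ref{stexcp}). For the degrees $\equiv 8 \pmod{16}$ not seen in part (1) I would supplement this with the comparison map $B\Z/(2^q)\to B\Z/2$ sending $\uhat$ to $z$, together with the known answer $ER(2)^*(B\Z/2)=ER(2)^*[[\uhat]]/[2]_{\hat{F}}(\uhat)$. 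Granting the injection, part (4) reduces to a computation inside $E(2)^{16*}(B\Z/(2^q))$: each generator lifts to $E(2)$-cohomology and the extension data for $2z$, $z^2$, and $2^q\pontr$ is read off from the explicit series $[2^q](\uhat)$, $[2^{q-1}](\uhat)$, and $c(\uhat)$ modulo the defining relation; for instance $z^2=(2^{q-1}\uhat)^2$ is evaluated modulo $[2^q]_{\hat{F}}(\uhat)$ using $\pontr=-\uhat^2$, and $2^q\pontr$ is extracted from the higher terms of $[2^q]_{\hat{F}}(\uhat)$ beyond its leading coefficient.

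The main obstacle is the precise determination of the BSS differentials, and in particular pinning down $d_1$ from the conjugation action on $E(2)^*[[\uhat]]/[2^q]_{\hat{F}}(\uhat)$: one must track how $c$ moves $\uhat$ through the modified formal group law $\hat{F}$, confirm that the higher differentials are exactly those forced by $x^7=0$, and control the $2$-adic arithmetic so that the torsion orders come out as $2^q$ on the $\pontr$-tower and $2$ on the $z$-tower. Once the differentials are settled, parts (1)--(4) follow as above.
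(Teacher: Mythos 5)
Your overall strategy is the same as the paper's: parts (1) and (2) are read off from the Bockstein spectral sequence computation for $B\Zqq$ (the paper's Theorem \ref{zqq}, proved by filtering $E(2)^*[[\uhat]]/[2^q]_{\hat F}(\uhat)$ by powers of $\uhat$ and running $d_{1,1}$, $d_{1,2}$, $d_3$, $d_7$), and part (4) is the same inductive rewriting of series using $[2^q](\uhat)$, $[2^{q-1}](\uhat)$, and $c(\uhat)$. Note, though, that essentially all of the content lives in the deferred BSS computation, which is not routine: the paper needs the two-stage computation of $d_1$, the relation $\vhat_1\uhat^2=\vhat_2\uhat^4$ to run $d_3$, and the permanence of $[2^{q-1}](\uhat)$ coming from the map $B\Zqq\to B\Zq$.

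The one step where your argument as written would not go through is part (3) in degrees $\equiv 8\pmod{16}$. Your filtration-zero observation only covers degrees $16*$ (the $\pontr^i$ and $z\pontr^i$ towers), and the proposed fix via the comparison map $B\Zqq\to B\Zq$ points the wrong way: a map $ER(2)^*(B\Zq)\to ER(2)^*(B\Zqq)$ cannot by itself certify that the reduction $ER(2)^{8*}(B\Zqq)\to E(2)^{8*}(B\Zqq)$ has trivial kernel, since the kernel consists of multiples of $x$ in $ER(2)^*(B\Zqq)$ that need not come from $B\Zq$. The paper's argument is a degree count: the kernel of reduction is $x\cdot ER(2)^*(B\Zqq)$, whose nonzero elements are detected by $x^j$ ($1\le j\le 6$) times $x^3$- or $x^7$-torsion generators; by Theorem \ref{zqq} all such generators sit in degrees $\equiv 0\pmod 8$, while $\deg(x)=-17\equiv -1\pmod 8$, so no multiple of $x$ lands in degree $8*$. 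You should replace the comparison-map step with this count (or an equivalent filtration argument covering all of $8*$, not just $16*$).
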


The setup for the following all comes from 
the work of the second author in
\cite{Vitaly}. We have a norm map:
\begin{equation}
\xymatrix{
E(2)^{*}(B \Zqq) \ar[rr]^{N_*} && 
ER(2)^{*}(B \Zqq).
}
\end{equation}

\begin{defn}
\label{Nres}
We take a restricted norm, and let $\text{\em im}(N_*^{res})$
be the image of the composition:
\[
\Zp[\vhat_1,v_2^{\pm 2}][[\uhat c(\uhat) ]]\{\uhat,v_2 \uhat, z \uhat, 
v_2 z \uhat\} 
\lra
\]
\[
\xymatrix{
E(2)^{*}(B \Zqq) \ar[r]^{N_*} &
ER(2)^{*}(B \Zqq) 
}
\]
where $z = [2^{q-1}](\uhat)$.
\end{defn}

Under the reduction 
$ER(2)^{*}(B \Zqq)  \rightarrow
E(2)^{*}(B \Zqq)$, 
\begin{equation}
N_*(y) \lra y + c(y). 
\end{equation}
Continuing from \cite[Lemma 10.1]{Vitaly}, the image of $\uhat$ is a power
series in $\pontr$, $N_*(\uhat) = \xi(\pontr)$.
In the statement of the next theorem we use the elements 
$\alpha_i \in ER(2)^{-12i}$ for $0 \le i < 4$, which are introduced in 
the next section.
To simplify notation, let $\alpha_{\{0,1,2,3\}}z$  or $\alpha_{\{0-3\}} z$
denote $\{\alpha_0 z = 2z, \alpha_1 z, \alpha_2 z, \alpha_3 z \}$ and so forth.
Similar to the second author's result for $ER(n)^*(\C \PP^\infty)$, 
\cite[Theorem 1.1]{Vitaly},
we
have:

\begin{thm}
\label{normseq}
There is a short exact sequence of modules over $ER(2)^*$
\[
0 \lra \text{im}(N_*^{res}) \lra 
ER(2)^{*}(B \Zqq)  \lra
\frac{ER(2)^*[[\pontr,z]]}{(J)} \lra 0
\]
where $(J)$ is the ideal generated by 
power series representing
$\xi(\pontr)$, 
$z^2$, $\alpha_{\{0-3\}} z$, and $2^{q-1}\alpha_{\{0-3\}} \pontr$,
all computable by algorithms described in the proof of Theorem \ref{thmtwoo}.
\end{thm}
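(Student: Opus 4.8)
The plan is to present the sequence as the comparison between the image of the restricted norm and the ``new'' $ER(2)$-classes generated by $\pontr$ and $z$, built on top of the module structure already obtained in Theorem~\ref{thmtwoo}. First I would construct the surjection onto the quotient. By Theorem~\ref{thmtwoo}(2) the two classes $z$ and $\pontr$ generate $ER(2)^*(B\Zqq)$ over the coefficient ring, so the assignment $\pontr \mapsto \pontr$, $z \mapsto z$ extends to a surjective $ER(2)^*$-algebra map
\[
\phi \colon ER(2)^*[[\pontr,z]] \twoheadrightarrow ER(2)^*(B\Zqq).
\]
Proving the theorem then reduces to the single identity $\phi^{-1}\bigl(\operatorname{im}(N_*^{res})\bigr) = (J)$: granting it, $\phi$ descends to an isomorphism $ER(2)^*[[\pontr,z]]/(J) \cong ER(2)^*(B\Zqq)/\operatorname{im}(N_*^{res})$, which is exactly the asserted short exact sequence, the left-hand injection being the tautological inclusion of the norm image as a submodule.

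Next I would verify the inclusion $(J) \subseteq \phi^{-1}\bigl(\operatorname{im}(N_*^{res})\bigr)$, i.e.\ that each listed generator maps into the norm image. The series $\xi(\pontr)$ equals $N_*(\uhat)$ by the computation recalled just before the statement (continuing \cite[Lemma 10.1]{Vitaly}), so it lies in the image by the very definition of $N_*^{res}$. For $z^2$, $\alpha_{\{0-3\}}z$, and $2^{q-1}\alpha_{\{0-3\}}\pontr$ I would apply the reduction formula $N_*(y) \mapsto y + c(y)$ together with the explicit power series for $[2^q](\uhat)$, $[2^{q-1}](\uhat)$ and $c(\uhat)$; these are precisely the extension problems resolved in the proof of Theorem~\ref{thmtwoo}(4), and solving them exhibits each of these products as $N_*$ of an element of the restricted domain $\Zp[\vhat_1, v_2^{\pm 2}][[\uhat c(\uhat)]]\{\uhat, v_2\uhat, z\uhat, v_2 z\uhat\}$. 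This already produces a well-defined surjection $ER(2)^*(B\Zqq) \twoheadrightarrow ER(2)^*[[\pontr,z]]/(J)$ whose kernel contains $\operatorname{im}(N_*^{res})$.

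Finally I would upgrade this to an isomorphism by a comparison on associated graded objects. Using the filtration of Theorem~\ref{thmtwoo}(1), whose associated graded is $ER(2)^{16*}[[\pontr]]/(2^q\pontr) \oplus ER(2)^{16*}[[\pontr]]/(2)\{2^{q-1}\uhat\}$, I would compute the associated gradeds of both $\operatorname{im}(N_*^{res})$, directly from the four module generators of the restricted-norm domain as in the model computation \cite[Theorem 1.1]{Vitaly}, and of $ER(2)^*[[\pontr,z]]/(J)$, and check that the proposed three-term sequence is short exact on the associated graded in each degree. Since the filtration is exhaustive and both maps respect it, short exactness on the associated graded lifts to the short exact sequence as stated.

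The hardest step is the reverse inclusion inside the last part: confirming that $(J)$ is the \emph{full} preimage of the norm image, equivalently that $\operatorname{im}(N_*^{res})$ is no larger than $\phi\bigl((J)\bigr)$ and that the quotient has exactly the predicted size. This is the delicate bookkeeping of tracking how the four generators $\uhat, v_2\uhat, z\uhat, v_2 z\uhat$ behave under $N_*$ once the $[2^q]$- and conjugation-relations are imposed, a formal group law manipulation pulled back along the map $B\Zqq \to \C\PP^\infty$ carrying $\uhat$ to the $[2^{q-1}]$-series, but now complicated by the torsion generators $\alpha_i$ and the recurring factor $2^{q-1}$. I expect essentially all of the computational content to live here.
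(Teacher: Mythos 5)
Your overall architecture (a surjection from $ER(2)^*[[\pontr,z]]$ onto the quotient by the norm image, followed by a comparison of associated graded objects) is the same as the paper's, but your first step contains a genuine error that hides most of the real work. You claim that $\phi\colon ER(2)^*[[\pontr,z]] \to ER(2)^*(B\Zqq)$ is surjective by Theorem \ref{thmtwoo}(2); that theorem only asserts that $z$ and $\pontr$ generate $ER(2)^{16*}(B\Zqq)$, the part in degrees divisible by $16$, not all of $ER(2)^*(B\Zqq)$. In fact $\phi$ is \emph{not} surjective: for example the $x^1$-torsion class represented by $2v_2\uhat$ lives in degree $-22$, and a degree count against the table in the appendix shows that $ER(2)^*[[\pontr,z]]$ is zero in that degree. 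The correct statement, and the content of the paper's argument, is that the composite $ER(2)^*[[\pontr,z]] \to ER(2)^*(B\Zqq)/\mathrm{im}(N_*^{res})$ is surjective, and proving this requires going through the full list of $x^1$-, $x^3$-, and $x^7$-torsion generators of Theorem \ref{zqq} and checking that everything \emph{not} expressible in $\pontr$ and $z$ over $ER(2)^*$ (the classes $2v_2\uhat$, $\vhat_1\uhat^2$, $2^{q-1}\vhat_1\uhat^3$, etc.) is absorbed by the reductions $v_2(\uhat - c(\uhat))$, $\uhat + c(\uhat)$, $z(\uhat+c(\uhat))$ of the four generators of the restricted norm domain. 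By assuming surjectivity of $\phi$ you have skipped exactly this step.

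Two further points. First, your claim that solving the extension problems ``exhibits each of these products as $N_*$ of an element of the restricted domain'' misreads what the generators of $(J)$ are: they are the difference power series $z^2 - Q(\pontr,z)$, $\alpha_i z - Q'(\pontr,z)$, etc., where $Q, Q'$ come from the rewriting algorithm. The relations in degrees $8*$ hold on the nose in $ER(2)^*(B\Zqq)$ (so those generators map to zero), while the $\alpha_{\{1,3\}}$ relations hold only modulo $\mathrm{im}(N_*^{res})$ (see Remark \ref{alpharelations}); either way they land in the kernel, but not because the products themselves are norms. Second, your associated graded comparison is proposed with the filtration of Theorem \ref{thmtwoo}(1), which again only sees degrees $16*$; the paper must (and does) run the comparison against the full torsion description of Theorem \ref{zqq}, and the delicate point there is reallocating $\pontr^2$ from apparent $x^7$-torsion to $x^3$-torsion using the leading terms $\vhat_1\pontr + \vhat_2\pontr^2$ of $\xi(\pontr)$. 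None of these gaps is fatal to the strategy, but as written the proof establishes the theorem only in degrees divisible by $16$.
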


Note that the last map is not a ring map.

The paper is organized as follows.  We do some necessary preliminaries
in Section \ref{preliminaries},
state our BSS results in Section \ref{statements}.
Next, in Section \ref{bss1}, we do our BSS computations for $\C \PP^\infty$,
and in Section \ref{bss2}, our BSS computations for
$B \Zqq$.
After that we describe the AHSS for $\C \PP^\infty$ in
Section \ref{AHSS},   followed by our computation of the
BSS for $\C \PP^n$ and the proof of Theorem \ref{thmone} 
in Section \ref{cpn}.
We finish with Theorem \ref{normseq} in Section \ref{secnorm}.
We include an appendix, Section \ref{appendix},
giving a table for $ER(2)^*$ in its $\Z/(48)$-graded form.

\section{Preliminaries}
\label{preliminaries}

There are many ways to describe $ER(2)^*$, but we will stick
mainly with the description given in
\cite[Remark 3.4]{Kitch-Wil-BO}.
See also the appendix, Section \ref{appendix}.

Although not always convenient, we traditionally call $\vhat_1 \in ER(2)^*$,
$\alpha$. It has degree 16 and maps to $v_1 v_2^{-3} \in E(2)^*$.  
We also have elements $\alpha_i$, $0 < i < 4$,
with degree $-12i$.  We often extend this notation to $\alpha_0 = 2$.  
These elements map to $2v_2^{2i} \in E(2)^*$.   
For the last  non-torsion algebra generator,
we have $w$ of degree -8, which
maps to $\vhat_1 v_2^4 = v_1 v_2 \in E(2)^*$.

Torsion is generated by the element $x \in ER(2)^{-17}$.  It has $2x=0$
and $x^7 = 0$.  
Keep in mind that $ER(2)^*$ is 48 periodic with periodicity element
$\vhat_2$ (mapping to $v_2^{-8}$).  
We use, for efficient notation, $x^{3-6} = \{x^3, x^4, x^5, x^6 \}.$

\begin{fact}
\label{ffact}
$ER(2)^*$ is:
\[
 \Zp[\vhat_1,\vhat_2^{\pm 1}] \{ 1, w, \alpha_1, \alpha_2, \alpha_3 \} 
\quad
\text{with}
\quad
2w = \alpha \alpha_2 = \vhat_1 \alpha_2
\]
\[
\Zq[\vhat_1,\vhat_2^{\pm 1}] \{ x^{1-2},  x^{1-2} w \}
\qquad
\Zq[\vhat_2^{\pm 1}] \{ x^{3-6} \}.
\]
\end{fact}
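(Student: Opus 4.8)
The plan is to obtain $ER(2)^*$ by running the $x$-Bockstein spectral sequence (BSS) attached to the exact couple of the cofibration (\ref{stexcp}), as in \cite{Nitufib, Kitch-Wil-BO}. For $X$ a point the $E_1$-page is $E(2)^* = \Zp[v_1, v_2^{\pm 1}]$, the filtration is the $x$-adic filtration $F^s = x^s ER(2)^*$, and since $x^7 = 0$ the spectral sequence collapses at $E_8$, with $E_\infty = \gr ER(2)^*$. Two things must then be pinned down: the differentials, which produce all the torsion, and the reassembly of $\gr ER(2)^*$ into $ER(2)^*$, i.e. the extensions.

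I would first identify the filtration-zero part. The long exact sequence of (\ref{stexcp}) shows $\ker(\rho) = \im(x)$ for the reduction $\rho\colon ER(2)^* \to E(2)^*$, so $\gr^0 = E_\infty^0 \cong \im(\rho) \subseteq E(2)^*$, which is torsion-free. The named generators are exactly the invariant lifts recorded in Section \ref{preliminaries}: $\vhat_1 \mapsto v_1 v_2^{-3}$, $\vhat_2 \mapsto v_2^{-8}$, $w \mapsto v_1 v_2$, and $\alpha_i \mapsto 2 v_2^{2i}$ (with $\alpha_0 = 2$). One checks these are permanent cycles and generate the image, giving $E_\infty^0 \cong \Zp[\vhat_1, \vhat_2^{\pm 1}]\{1, w, \alpha_1, \alpha_2, \alpha_3\}$ subject to $2w = \vhat_1 \alpha_2$ (both mapping to $2 v_1 v_2$). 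The positive-filtration classes are the survivors in the kernel of $\rho$; by the Leibniz rule the whole differential pattern is forced by the first nonzero differential $d_1$ computed in \cite{Kitch-Wil-BO}, and propagating it records which monomials $v_1^a v_2^b$ fail to be invariant and how they are hit. The surviving torsion is $\Zq[\vhat_1, \vhat_2^{\pm 1}]\{x^{1-2}, x^{1-2} w\}$ together with $\Zq[\vhat_2^{\pm 1}]\{x^{3-6}\}$; a feature to confirm is that the coefficients of the $x^{3-6}$ families involve only $\vhat_2^{\pm 1}$, reflecting $\vhat_1 x^3 = 0$ (consistent with the degrees, where e.g. $ER(2)^{-35} = 0$).

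Finally I would confirm the relations in $ER(2)^*$ itself. The only relation among the filtration-zero generators is $2w = \vhat_1 \alpha_2$, which I must check holds in $ER(2)^*$ and not merely modulo $x$: both sides lie in $ER(2)^{-8}$ and reduce to $2 v_1 v_2$ under $\rho$, so their difference lies in $\ker(\rho)$. Since $2x = 0$, every element of $\ker(\rho) = \im(x)$ is $2$-torsion; but the torsion computation above shows $ER(2)^{-8}$ contains no torsion (no $x^i$-monomial has degree $-8$), so $\rho$ is injective in degree $-8$ and $2w = \vhat_1 \alpha_2$ holds on the nose. Assembling $E_\infty^0$, the torsion module, and this relation yields the stated description. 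I expect the genuine obstacle to be the differentials — justifying that $d_1$ together with multiplicativity kills exactly the claimed monomials and forces relations such as $\vhat_1 x^3 = 0$ — rather than the extension step, which is forced by the degree bookkeeping together with $2x = 0$.
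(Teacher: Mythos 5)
Your route is the same as the paper's: run the Bockstein spectral sequence of the cofibration (\ref{stexcp}) for a point, read the torsion off the images of the differentials, and settle the single extension $2w=\vhat_1\alpha_2$ by observing that $\ker(\rho)=\im(x)$ consists of $2$-torsion while $ER(2)^{-8}$ has none. The filtration-zero identification, the list of torsion families, and the extension argument are all correct and match what the paper does (the paper uses the untruncated version of the BSS converging to zero, but as it notes, the two versions carry the same information).

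The one genuine gap is your claim that ``by the Leibniz rule the whole differential pattern is forced by the first nonzero differential $d_1$.'' It is not. After $d_1$ (determined by $d_1(v_2)=2v_2^{-2}$) one has $E_2=E_3=\Zq[\vhat_1,v_2^{\pm 2}]$, and the next differentials, $d_3(v_2^{2})=\vhat_1 v_2^{-4}$ and then $d_7(v_2^{4})=\vhat_2^{2}$, are independent inputs: multiplicativity over the surviving subring only tells you that $d_3$ commutes with $\vhat_1$ and $v_2^{4}$ and that $d_7$ commutes with $v_2^{8}$, and degree counting alone does not pin down $d_3(v_2^2)$ (e.g.\ $\vhat_1^{4}v_2^{4}$ sits in the same degree as $\vhat_1 v_2^{-4}$). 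These higher differentials have to be imported from the known computation of $\pi_*ER(2)$ (\cite{HK}, \cite{Nitufib}, \cite{Kitch-Wil-BO}), which is exactly what the paper does by recording them as given; relations such as $\vhat_1 x^{3}=0$ are then consequences of $d_3$, not of $d_1$. Since the statement is a Fact cited from the literature this is a modest defect, but as written your argument would not reconstruct the $x^{3}$- and $x^{7}$-torsion without that extra input.
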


What makes $ER(2)^*(-)$ computable is the result from
\cite{Nitufib}
that tells us that the fibre of the  fixed point inclusion, 
$ER(2) \lra E(2)$, is just
$\Sigma^{17} ER(2)$ and that the map of $\Sigma^{17}ER(2)$ to 
$ER(2)$ is just $x$ with $x^7 = 0$, i.e. we have the
stable cofibration sequence:
\begin{equation}
\label{exactcouple}
\xymatrix{
\Sigma^{17}ER(2) \ar[rr]^x & &  ER(2) \ar[ddl] \\
 & & \\
 & E(2) \ar[uul]^\partial &
}
\end{equation}

From this, we have an 
exact couple and
a convergent 
Bockstein Spectral Sequence that begins with $E(2)^*(X)$  
and where there can only be differentials $d_1$ through $d_7$.

We have used two versions of this spectral sequence in
the past.  In \cite[Theorem 2.1]{Kitch-Wil-BO} we used the truncated
version that
converges to $ER(2)^*(X)$, but in \cite[Theorem 4.2]{NituP} and \cite{NituP2}
we used the  untruncated
version that converges to zero.  Both versions
contain the same information, but the designated writer for
this paper prefers the one converging to zero because it gets
cleaner as each differential is computed.  The drawback, of course,
is that one must go back to the differentials to reconstruct
$ER(2)^*(X)$.

We give a simplified 
summary of the 
Bockstein Spectral Sequence 
(BSS) for computing $ER(2)^*(X)$ from $E(2)^*(X)$.

\begin{thm}[{\cite{NituP}[Theorem 4.2]}] \ 
\begin{enumerate}
\item
The exact couple, (\ref{exactcouple}), gives a spectral sequence, $E_r$,
of $ER(2)^*$ modules,
starting with
\[
E_1 \simeq E(2)^*(X) \quad \text{and ending with} \quad 
E_{8} = 0.
\]
\item
$
d_1(y) = v_2^{-3}(1-c)(y)$
where
$c(v_i)= -v_i
$
and $c$ comes from complex conjugation.
\item
The degree of $d_r$ is $17r +1$.
\item
The targets of the 
$d_r$ represent the $x^r$-torsion generators of $ER(2)^*(X)$. 
\end{enumerate}
\end{thm}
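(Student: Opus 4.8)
The plan is to derive all four statements from the standard exact-couple machinery attached to the cofibration (\ref{exactcouple}), treating parts (1), (3), and (4) as formal consequences and reserving the genuine work for the identification of $d_1$ in part (2). First I would fix the exact couple with $D = ER(2)^*(X)$ and $E = E(2)^*(X)$, where $i\colon D \to D$ is multiplication by $x$, where $j\colon D \to E$ is the reduction induced by the fixed-point inclusion $ER(2) \to E(2)$, and where $k = \partial\colon E \to D$ is the connecting map. Rotating the cofibration and applying $[X,-]$ gives the long exact sequence
\[
\cdots \to ER(2)^{n+17}(X) \xrightarrow{\;\cdot\, x\;} ER(2)^{n}(X) \xrightarrow{\;j\;} E(2)^{n}(X) \xrightarrow{\;\partial\;} ER(2)^{n+18}(X) \to \cdots,
\]
which identifies $E_1 = E(2)^*(X)$, shows $\partial$ has degree $+18$, and shows $i$ has degree $-17$. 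This settles the identification of $E_1$ in (1).

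For the termination in (1), I would use that the $r$-th derived couple has $D_r = \im(i^{r-1}) = x^{r-1}\,ER(2)^*(X)$. Since $x^{7} = 0$, we get $D_8 = x^{7}\,ER(2)^*(X) = 0$, and then exactness of the eighth couple forces $E_8 = 0$; this is precisely why the untruncated spectral sequence converges to zero. Part (3) is then a bookkeeping computation in the derived couple: heuristically $d_r = j \circ i^{-(r-1)} \circ k$, so its degree is $\deg(k) + (r-1)\bigl(-\deg(i)\bigr) = 18 + 17(r-1) = 17r+1$.

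Part (4) I would obtain by a diagram chase in the exact couple. An element $y \in ER(2)^*(X)$ with $x^{r-1}y \neq 0$ but $x^{r}y = 0$ has $x^{r-1}y \in D_r \cap \ker(\cdot\, x) = D_r \cap \im(\partial)$, using exactness; pushing $y$ through the derived maps shows the associated class in $E_r$ is hit by $d_r$, and conversely every target of $d_r$ lifts under $j$ to such a class. Thus the targets of $d_r$ form a generating set for the $x^{r}$-torsion, modulo the $x^{r-1}$-divisible part already accounted for. This is the usual torsion-detection property of a Bockstein spectral sequence, here with the nilpotent $x$ playing the role of the prime.

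The crux, and the step I expect to be the main obstacle, is part (2): the identification $d_1 = v_2^{-3}(1-c)$. Since $d_1 = j \circ \partial$ is a natural stable operation on $E(2)^*(-)$ of degree $18$, it is represented by a map of spectra $E(2) \to \Sigma^{18}E(2)$, and I would pin it down using the origin of the cofibration (\ref{exactcouple}) in the homotopy fixed points of the complex-conjugation ($\Z/2$) action, following \cite{Nitufib} and \cite{HK}. The first differential in the homotopy fixed point spectral sequence for a $\Z/2$-action is the operator $1-c$, and matching the degree-$18$ shift to the periodicity of $E(2)$ produces the factor $v_2^{-3}$ (which indeed has degree $18$ since $\deg v_2 = -6$). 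To make this rigorous I would evaluate the operation on the universal examples $E(2)^*$ and $E(2)^*(\C\PP^\infty) = E(2)^*[[u]]$, check that it agrees with $v_2^{-3}(1-c)$ using $c(v_i) = -v_i$, and conclude by naturality and the representability of such operations that the two degree-$18$ operations coincide. The delicate point is keeping careful track of the conjugation action on the formal group data so that the precise normalization $v_2^{-3}$, rather than some other unit multiple, appears.
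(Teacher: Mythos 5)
This theorem is not proved in the paper at all: it is imported verbatim from \cite{NituP}, Theorem 4.2, and the paper only restates it as a ``simplified summary,'' so there is no in-paper proof to compare against. Your outline is essentially the standard argument behind the cited result. Parts (1), (3), and (4) are correctly handled: the long exact sequence you write down is the right one, $D_8=\im(x^7)=0$ does force $E_8=0$ by exactness of the eighth derived couple, the degree count $18+17(r-1)=17r+1$ is correct, and your diagram chase for (4) recovers exactly the paper's subsequent definition, since $\im(d_r)=j(\ker x^r)/j(\ker x^{r-1})=M_r/M_{r-1}$.

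The one place your sketch is thin is the crux you yourself flag, part (2). Two cautions. First, the assertion that ``the first differential in the homotopy fixed point spectral sequence for a $\Z/2$-action is $1-c$'' is about a \emph{different} spectral sequence from the Bockstein spectral sequence of the exact couple (\ref{exactcouple}); identifying the two (in the relevant range) is precisely the nontrivial content of \cite{Nitufib}, where the fiber of $ER(2)\to E(2)$ is identified with $\Sigma^{17}ER(2)$ and the attaching map with $x$. You cannot simply transport the $1-c$ formula without invoking that identification. Second, degree counting alone does not force the normalization $v_2^{-3}$: the degree-$18$ part of $E(2)^*$ contains $v_2^{-3}$ but also $v_1^3v_2^{-4}$, $v_1^6v_2^{-5}$, etc., so the operation $j\circ\partial$ could a priori be $(1-c)$ followed by multiplication by a nonunit combination of these; pinning down $v_2^{-3}$ requires the explicit choice of equivalence in \cite{Nitufib}, as you acknowledge. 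For a statement the paper itself only cites, this level of deferral is reasonable, but be aware that part (2) is where all the genuine work resides and your proposal does not actually carry it out.
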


\begin{defn}
Let $K_i$ be the kernel of $x^i$ on $ER(2)^*(X)$ and let
$M_i$ be the image of $K_i$ in $ER(2)^*(X)/(xER(2)^*(X)) \subset E(2)^*(X)$.
We call $M_r/M_{r-1} \simeq$ image $d_r$ the {\bf $x^r$-torsion generators.}
\end{defn}

\begin{remark}
All of our BSSs in this paper have only even degree elements, so we
always have $d_2 = d_4 = d_6 = 0$.
\end{remark}

For our purposes, it is important to know how this works
for the cohomology of a point.  
The differential $d_1$ commutes with $\vhat_1$ and $v_2^2$.
All that matters here is 
$d_1(v_2) = 2 v_2^{-2}$.

The $E_2$ term becomes $\Zq [\vhat_1,v_2^{\pm 2}]$.
We have $d_3$ commutes with
$\vhat_1$ and $v_2^4$,  and $d_3(v_2^2) = \vhat_1 v_2^{-4}$.

This leaves us with only $\Zq[v_2^{\pm 4}]$.
We have $d_7$ commutes with
$v_2^8 = \vhat_2^{-1}$ and $d_7(v_2^4) = \vhat_2 v_2^{-8}= \vhat_2^2 = v_2^{-16}$,
so $E_8 = 0$.

In terms of describing our $ER(2)^*$ using this approach, we see
that the $x$-torsion is generated by $\Zp [ \vhat_1, v_2^{\pm 2} ] \{ 2 \}$,
the $x^3$-torsion by $\Zq[\vhat_1,v_2^{\pm 4}]\{\vhat_1\}$, 
and the $x^7$-torsion
by $\Zq [ \vhat_2^{\pm 1 } ]$.  The previous description of $ER(2)^*$ is easy
to relate to this now.  The $x$-torsion is given by
$\Zp[\vhat_1, \vhat_2^{\pm 1}]$ on the $\alpha_i$, $0 \le i < 4$.   The 
$x^3$-torsion is generated over 
$\Zq[\vhat_1, \vhat_2^{\pm 1}]$ 
on $\vhat_1= \alpha$ and $w$.
Finally, the $x^7$-torsion is given by $\Zq[\vhat_2^{\pm 1}]$.

Combining the $x$, $x^3$, and $x^7$-torsion, we find that
\[
ER(2)^{16*} = \Zp [ \vhat_1, \vhat_2^{\pm 1}].
\]

The theory $E(2)^*(-)$ is a complex orientable theory so 
$E(2)^*(\C \PP^\infty) = E(2)^*[[u]]$ where $u$ is of degree 2.
For reasons that will become apparent later, we want to ``hat''
this like we did $E(2)^*$.  This follows \cite[pages 235--236]{Kitch-Wil-BO} and
\cite[Section 5]{NituP}.
The only adjustment needed here is to define $\uhat = u v_2^3$,
of degree -16.
For the purposes of our BSS, we write $E(2)^*(\C \PP^\infty) =
E(2)^*[[\uhat]]$.  Since $v_2$ is a unit, this is not a problem.

\begin{remark}
The standard formal group law for $E(2)$ is 
\[
F(x,y) = \sum a_{i,j} x^i y^j \qquad a_{i,j} \in E(2)^{-2(i+j-1)}
\]
with the degrees
of $x$ and $y$ equal to two.  The element $F(x,y)$ also has degree two.
For $a \in E(2)^{2i}$, define $\hat{a} = a v_2^{3i}$.
Also, let $\hat{x} = v_2^3 x$, $\hat{y} = v_2^3 y$, and $\uhat = u v_2^3$.  
Special cases are $\vhat_1 = v_1 v_2^{-3}$ and $\vhat_2 = v_2 v_2^{-9} = v_2^{-8}$.
Now, define
\[
\hat{F}(\hat{x},\hat{y}) = \sum \hat{a}_{i,j} \hat{x}^i \hat{y}^j
= \sum a_{i,j} v_2^{-3(i+j-1)} x^i v_2^{3i} y^j v_2^{3j} 
\]
\[
= v_2^3 \sum a_{i,j} x^i y^j
= v_2^3 F(x,y).
\]
\end{remark}

We will need the complex conjugate of $\uhat$, $c(\uhat)$.  It
has the defining property that $\hat{F}(\uhat,c(\uhat)) = 0$.

The formal group law begins with
\[
\hat{F}(\hat{x},\hat{y}) = \hat{x}+\hat{y} + \vhat_1 \hat{x}\hat{y}
\]
so $c(\uhat)$ begins $-\uhat + \vhat_1 \uhat^2$.
When we work mod 2, the formal group law begins
\[
\hat{F}(\hat{x},\hat{y}) = \hat{x}+\hat{y} + \vhat_1 \hat{x}\hat{y} + 
\vhat_1^2 (\hat{x}^2 \hat{y} + \hat{x} \hat{y}^2)+ \vhat_2 \hat{x}^2 \hat{y}^2
\]
and the corresponding computation mod 2 
begins
\[
c(\uhat) =
\uhat + \vhat_1 \uhat^2 + \vhat_1^2 \uhat^3 + \vhat_2 \uhat^4
\]
While we are working with the formal group law, we need another
fact.  Recall that $[2](\hat{x}) = \hat{F}(\hat{x},\hat{x})$ 
and $[2^q](\hat{x})=
[2]([2^{q-1}(\hat{x})])$.  When we set $0 = [2^q] (\hat{x})$, we need to
know that
\[
0 = 2^q \hat{x} + 2^{q-1}\vhat_1 \hat{x}^2 \mod (\hat{x}^3).
\]
This follows from a simple induction on $q$.

We collect the basics we need:

\begin{lemma} 
\label{lemma}
\[
\begin{array}{lcl}
c(\uhat) = -\uhat + \vhat_1 \uhat^2 & \hspace{.5in} & \mod (\uhat^3) \\
c(\uhat) = \uhat + \vhat_1 \uhat^2 + \vhat_1^2 \uhat^3
+\vhat_2 \uhat^4 
& \hspace{.5in}  &
\mod (2, \uhat^5) \\
\text{If we set } 0=[2^q](\uhat), \text{ then }  
& \hspace{.5in}  &
\ \\
\ 0=[2^q](\uhat) = 2^q \uhat + 2^{q-1}\vhat_1 \uhat^2 
& \hspace{.5in}  &
\mod (\uhat^3) \\
\end{array}
\]
There is 
a modified 
Pontryagin class,
$\pontr \in ER(2)^{-32}(\C \PP^\infty)$, which maps to
$\uhat c(\uhat) \in E(2)^{-32}(\C \PP^\infty)$.

Filtering by powers of $\uhat$, 
$ \pontr
= \uhat c(\uhat) =
-\uhat^2 $
in the
associated graded object.
\end{lemma}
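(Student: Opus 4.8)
The plan is to handle the four assertions in turn, observing that the first three are precisely the formal group law computations already carried out in the Remark preceding the statement, so that the proof mainly assembles them, and that the only genuinely new points are the existence of $\pontr$ and the identification of its leading term. For the two expansions of $c(\uhat)$ I would solve the defining equation $\hat{F}(\uhat,c(\uhat))=0$ by successive approximation: write $c(\uhat)=\sum_{k\ge 1}a_k\uhat^k$, substitute the known low-degree terms of $\hat{F}$ (the integral expansion $\hat{x}+\hat{y}+\vhat_1\hat{x}\hat{y}$ for the first line, and the mod-$2$ expansion through the $\vhat_2\hat{x}^2\hat{y}^2$ term for the second), and match coefficients of $\uhat^k$ order by order. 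The leading relation forces $a_1=-1$, after which each coefficient is determined linearly by the earlier ones; reading off the answer through $\uhat^2$ (respectively through $\uhat^4$ modulo $2$) gives the two displayed formulas.

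For the $[2^q]$-expansion I would induct on $q$ via $[2^q](\uhat)=[2]([2^{q-1}](\uhat))$, with base case $[2](\uhat)=2\uhat+\vhat_1\uhat^2 \bmod \uhat^3$. Writing $[2^{q-1}](\uhat)=2^{q-1}\uhat+b_{q-1}\uhat^2 \bmod \uhat^3$ and applying $[2](y)=2y+\vhat_1 y^2 \bmod y^3$ produces the recursion $b_q=2b_{q-1}+2^{2q-2}\vhat_1$, whose solution $b_q=2^{q-1}(2^q-1)\vhat_1$ has $2$-adic valuation exactly $q-1$. Since $2^q-1$ is a unit in $\Zp$, this equals $2^{q-1}\vhat_1$ up to a unit, which is all the relations extracted from it will use; the linear coefficient is visibly $2^q$, giving the stated form $2^q\uhat+2^{q-1}\vhat_1\uhat^2 \bmod \uhat^3$.

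The substantive clause is the existence of the modified Pontryagin class. Here I would follow \cite[pages 235--236]{Kitch-Wil-BO} and \cite[Section 5]{NituP}: take the $ER(2)$-theoretic first Pontryagin class on $BO(2)$ (available from the computation of $ER(2)^*(BO)$) and pull it back along the realification map $\C\PP^\infty=BU(1)\to BO(2)$ to obtain $\pontr\in ER(2)^{-32}(\C\PP^\infty)$, the $v_2$-hatting being the only adjustment needed to land in degree $-32$. To identify its reduction I would use the splitting principle in $E(2)^*(\C\PP^\infty)=E(2)^*[[\uhat]]$: the Pontryagin class of the realified line bundle is the product of the Chern root of $L$ and of its conjugate $\bar L$, namely $\uhat\,c(\uhat)$. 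The main obstacle, and the only point requiring care beyond citation, is that $\uhat c(\uhat)$ must genuinely lift to $ER(2)$ rather than merely survive one differential. Conjugation-invariance, $c(\uhat c(\uhat))=c(\uhat)\uhat=\uhat c(\uhat)$, gives $d_1(\uhat c(\uhat))=0$ through $d_1=v_2^{-3}(1-c)$, but this only exhibits a $d_1$-cycle; the actual lift is supplied by the geometric construction on $BO(2)$, not by invariance alone.

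Finally, the associated-graded claim is immediate from the first expansion: since $c(\uhat)\equiv-\uhat \bmod \uhat^2$, we have $\uhat c(\uhat)=-\uhat^2+\vhat_1\uhat^3+\cdots$, so in the $\uhat$-adic associated graded object the reduction of $\pontr$ is represented by $-\uhat^2$.
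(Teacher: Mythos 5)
Your proposal is correct and follows essentially the same route as the paper: the two expansions of $c(\uhat)$ and the $[2^q]$-series are the formal group law computations from the preceding remark (with the induction on $q$ exactly as you set it up, and your observation that the $\uhat^2$-coefficient is really $2^{q-1}$ times the unit $2^q-1$ is a fair point the paper glosses over), while the existence of $\pontr$ is deferred to the literature and the sign $-\uhat^2$ in the associated graded object is read off from $c(\uhat)\equiv -\uhat \bmod \uhat^2$. The only cosmetic difference is that you sketch a construction of $\pontr$ via realification through $BO(2)$, whereas the paper simply cites \cite[Section 5]{Vitaly} for that clause; either way the substantive content of that step lives in the cited sources.
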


\begin{proof}
The only thing left to prove are the statements about $\pontr$.  We refer
the reader to \cite[Section 5]{Vitaly}.  The sign in the last statement 
follows immediately from the previous statements. 
\end{proof}

\section{Statement of results for the BSS}
\label{statements}

We state all the results for the BSS for $ER(2)^*(-)$ 
for $\C \PP^\infty$, $B \Zqq$, and $\C \PP^n$.
In each case there is a filtration on $ER(2)^*(X)$ such that we
can identify the representatives of the $x^i$-torsion generators
of $ER(2)^*(X)$ in the associated graded object.
This comes about because we use a spectral sequence to compute
the differentials in the BSS.
Keep in mind that the element $\pontr$ is represented by $-\uhat^2$
in the associated graded object.
Also keep in mind that we are working with the entire cohomology here,
not just the degrees $16*$ discussed in the introduction.

\begin{thm}
\label{cp}
There is a filtration on the reduced $ER(2)^*(\C \PP^\infty)$ such that
we can identify the representatives of the $x^i$-torsion
generators for $ER(2)^*(\C \PP^\infty)$ in the associated
graded object as follows:

The $x^1$-torsion generators are:
\[
\Zp[\vhat_1,v_2^{\pm 2},\uhat^2] \{2 v_2 \uhat,
 2  \uhat^2\}
\qquad
\Zq[\vhat_1,v_2^{\pm 2},\uhat^2] \{ \vhat_1 \uhat^2\}.
\]

The $x^3$-torsion generators are
$
\Zq[v_2^{\pm 4}, \uhat^2]\{\uhat^4\}.
$

The $x^7$-torsion generators are
$
\Zq[\vhat_2^{\pm 1} ] \{\uhat^2\}.
$
\end{thm}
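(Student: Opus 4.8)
The plan is to run the Bockstein spectral sequence (BSS) of the exact couple (\ref{exactcouple}) on $E_1 = E(2)^*(\C\PP^\infty) = E(2)^*[[\uhat]]$ (reduced, so $\uhat$-adically positive), where $d_1 = v_2^{-3}(1-c)$ and, since everything sits in even degrees, only $d_1, d_3, d_5, d_7$ can be nonzero; part of the work is to see that $d_5 = 0$ and $E_8 = 0$. The organizing observation is that $\pontr = \uhat\, c(\uhat)$ is $c$-invariant (as $c^2 = 1$), hence a $d_1$-cycle, and that $\pontr \equiv -\uhat^2$ to leading order by Lemma \ref{lemma}; thus $E(2)^*[[\uhat]]$ is free of rank two over $E(2)^*[[\pontr]]$ on $\{1,\uhat\}$, and the whole BSS becomes a module over the BSS of a point (whose differentials $d_1(v_2)=2v_2^{-2}$, $d_3(v_2^2)=\vhat_1 v_2^{-4}$, $d_7(v_2^4)=\vhat_2 v_2^{-8}$ are recorded in the Preliminaries), base-changed along the permanent cycle $\pontr$. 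I would filter $E_1$ by powers of $\uhat$ and compute in the associated graded throughout, which is exactly why the theorem records the $x^i$-torsion generators only in an associated graded object.

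For $d_1$ I would feed in the formal-group-law data of Lemma \ref{lemma}. One has $d_1(\uhat) = v_2^{-3}(\uhat - c(\uhat)) = v_2^{-3}(2\uhat - \vhat_1\uhat^2 + \cdots)$. The leading term $2v_2^{-3}\uhat$, combined with the point computation, is just multiplication by $2$ on the ``even total parity'' monomials and yields the first family of $x^1$-torsion generators $\Zp[\vhat_1, v_2^{\pm 2}, \uhat^2]\{2v_2\uhat, 2\uhat^2\}$. The genuinely new input is the quadratic term: since $(1-c)(v_2\uhat) = v_2(\uhat + c(\uhat)) = v_2\vhat_1\uhat^2 + \cdots$, we get $d_1(v_2\uhat) = v_2^{-2}\vhat_1\uhat^2 + \cdots$, so $d_1$ hits $\vhat_1\uhat^2 = -\vhat_1\pontr$ up to a unit. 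This produces the second, purely $2$-torsion family $\Zq[\vhat_1, v_2^{\pm 2}, \uhat^2]\{\vhat_1\uhat^2\}$ and leaves the associated graded of $E_2$ equal to $\Zq[v_2^{\pm 2}, \uhat^2]\{\uhat^2\}$.

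The main obstacle is $d_3$ (and then $d_7$), because of a hidden-extension phenomenon. Naively, $E_2$ is ``$\vhat_1$-free'' and $\pontr$ is a $d_3$-cycle, so the module formula $d_3(v_2^2\,\pontr^k) = (d_3 v_2^2)\,\pontr^k = \vhat_1 v_2^{-4}\pontr^k$ would vanish. The point is that $\vhat_1\pontr^k$ is zero only in the associated graded: as an honest class it is \emph{not} in the image of $d_1$ (one checks $v_1\pontr \notin \im(1-c)$, since passing to $\vhat_1\pontr \in \im d_1$ would require dividing by $2$, which is illegal over $\Zp$), so it survives as a nonzero class sitting in strictly higher $\uhat$-filtration. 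I would compute that class from the formal group law and identify $d_3(v_2^2\,\pontr^k) = v_2^{-4}\vhat_1\pontr^k$ with the resulting $\uhat^4$-level class; since the point differential $d_3(v_2^2) = \vhat_1 v_2^{-4}$ cuts the $v_2$-periodicity from $2$ to $4$, this yields exactly the $x^3$-torsion generators $\Zq[v_2^{\pm 4}, \uhat^2]\{\uhat^4\}$. Running the same hidden-extension bookkeeping once more, now with $d_7(v_2^4) = \vhat_2 v_2^{-8}$, gives the $x^7$-torsion generators $\Zq[\vhat_2^{\pm 1}]\{\uhat^2\}$; verifying that nothing remains to support a $d_5$ (so $d_5 = 0$) and that $E_8 = 0$ then finishes the proof. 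The delicate part throughout is tracking these formal-group-law corrections and the hidden extensions they create, which is precisely why every statement is phrased in the associated graded.
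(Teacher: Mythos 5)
Your proposal follows essentially the same route as the paper: filter $E(2)^*[[\uhat]]$ by powers of $\uhat$, compute $d_1$ in two stages (leading term giving the $\Zp[\vhat_1,v_2^{\pm2},\uhat^2]\{2v_2\uhat,2\uhat^2\}$ family, quadratic term of $c(\uhat)$ giving $\vhat_1\uhat^2$), use the formal group law to see that $\vhat_1\pontr$ is nonzero and represented by $\vhat_2\uhat^4$ in higher filtration (the paper's relation $0=\vhat_1\uhat^2+\vhat_2\uhat^4$, which feeds $d_3(v_2^2\uhat^2)=v_2^{-12}\uhat^4$), kill $d_4$--$d_6$ by degree mod $24$, and run $d_7$ on the coefficient $v_2^4$. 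One small caveat: $c$-invariance only makes $\pontr$ a $d_1$-cycle, and its permanence (needed for $d_3(\uhat^2)=0$ and for $d_7$ to act only on coefficients; no further hidden-extension bookkeeping is actually required at $E_7$) comes from the lift $\pontr\in ER(2)^{-32}(\C\PP^\infty)$ recorded in Lemma \ref{lemma}, which you cite in any case.
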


\begin{remark}
The reader will note that there is an obvious extension
in the $x^1$-torsion, i.e. 2 times the elements on the right
are in the module on the left.  Honest $x^r$-torsion generators
reduce to elements that are the image of $d_r$.  Since we have
filtered our spectral sequence, none of our $x^r$-torsion generators
are honest because we only see these images in the first filtration
they show up.  In the case of the $x^1$-torsion, we need two differentials
in our filtration in order to compute $d_1$.  The way we describe the
result corresponds to the images of those two differentials.  
\end{remark}

\begin{thm}
\label{zqq}
There is a filtration on the reduced $ER(2)^*(B \Zqq)$ such that
we can identify the representatives of the $x^i$-torsion
generators for $ER(2)^*(B \Zqq)$ in the associated
graded object as follows:

The $x^1$-torsion generators are:
\[
\Z/(2^{q-1})[\vhat_1,v_2^{\pm 2},\uhat^2] \{2v_2 \uhat ,
 2 \uhat^2 \}
\quad
\Zq[\vhat_1,v_2^{\pm 2},\uhat^2] \{ 
\vhat_1 \uhat^2,
2^{q-1} \vhat_1 \uhat^3 \}
\]

The $x^3$-torsion generators are:
\[
\Zq[\vhat_1,v_2^{\pm 4}]\{2^{q-1}\vhat_1 \uhat\}
\qquad
\Zq[v_2^{\pm 4},\uhat^2]\{\uhat^4, 2^{q-1}\uhat^5 \}.
\]

The $x^7$-torsion generators are
$
\Zq[\vhat_2^{\pm 1}]
\{ 2^{q-1} \uhat, \uhat^2 , 2^{q-1}  \uhat^3 \}.
$
\end{thm}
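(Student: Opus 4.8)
The plan is to run the Bockstein spectral sequence summarized above, now with input $E_1 = E(2)^*(B\Zqq) = E(2)^*[[\uhat]]/[2^q](\uhat)$, to filter it by the auxiliary spectral sequence we use to compute the differentials, and to organize the whole computation by comparison with the already-established case of $\C \PP^\infty$ in Theorem \ref{cp}. The inclusion of the $2^q$-th roots of unity $\Zqq \hra S^1$ induces a map $B\Zqq \ra \C \PP^\infty$ sending $\uhat$ to $\uhat$, hence a surjection of $E_1$-pages $E(2)^*[[\uhat]] \fib E(2)^*[[\uhat]]/[2^q](\uhat)$ compatible with every differential. Consequently the differentials are given by the same formulas as for $\C \PP^\infty$: $d_1(y) = v_2^{-3}(1-c)(y)$, so that $d_1(\uhat) \equiv v_2^{-3}(2\uhat - \vhat_1\uhat^2) \mod (\uhat^3)$, while on coefficients $d_3(v_2^2) = \vhat_1 v_2^{-4}$ and $d_7(v_2^4) = \vhat_2^2$, and $\pontr = -\uhat^2$ in the associated graded by Lemma \ref{lemma}.

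The single new ingredient is the relation $[2^q](\uhat)=0$, which by Lemma \ref{lemma} reads $2^q\uhat \equiv -2^{q-1}\vhat_1\uhat^2 \mod (\uhat^3)$, and the heart of the proof is tracking this relation through the spectral sequence. Its characteristic effect is already visible on the $x^1$-torsion: the BSS shows $\vhat_1\uhat^2$ to be a $2$-torsion class, so the relation forces $2^q v_2\uhat = -2^{q-1}v_2\vhat_1\uhat^2$ to vanish for $q \ge 2$, collapsing the free $\Zp[\vhat_1, v_2^{\pm 2}, \uhat^2]$-module on $\{2v_2\uhat, 2\uhat^2\}$ of Theorem \ref{cp} to a $\Z/(2^{q-1})$-module. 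The remaining discrepancies with Theorem \ref{cp} --- the new $x^1$-generator $2^{q-1}\vhat_1\uhat^3$, the new $x^3$-generators $2^{q-1}\vhat_1\uhat$ and $2^{q-1}\uhat^5$, and the new $x^7$-generators $2^{q-1}\uhat$ and $2^{q-1}\uhat^3$ --- all arise by the same mechanism: the relation rewrites $2^q$ times an odd power of $\uhat$ in terms of strictly higher even powers, so that the $2^{q-1}$-multiples of the odd powers, which are invisible for $\C \PP^\infty$, become honest $x^r$-torsion generators.

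Concretely I would carry out the differentials in order. First compute $d_1$ on all of $E_1$, reading the $x^1$-torsion generators off the image and using the relation to decide exactly which $2$-power multiples survive; this is where the $\Zp \to \Z/(2^{q-1})$ reduction and the class $2^{q-1}\vhat_1\uhat^3$ are produced. Passing to $E_2 = \Zq[\vhat_1, v_2^{\pm 2}, \ldots]$, compute $d_3$ to extract the $x^3$-torsion, verify that $d_5 = 0$ so that there is no $x^5$-torsion (consistent with Theorem \ref{cp}), and finally compute $d_7$ to extract the $x^7$-torsion. At each stage the generators are identified in the associated graded of the auxiliary filtration, and the comparison surjection from $\C \PP^\infty$ is used as a check that every old generator maps correctly and that the only additions are the $2^{q-1}$-multiple classes listed above.

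The main obstacle is that the relation $[2^q](\uhat)=0$ mixes filtration degrees --- its leading term $2^q\uhat$ sits in one filtration while the term $2^{q-1}\vhat_1\uhat^2$ that controls the answer sits one filtration higher --- so knowing the $\C \PP^\infty$ spectral sequence does not formally hand over the $B\Zqq$ one, since quotienting a spectral sequence is not exact. The delicate points are therefore to confirm that the new $2^{q-1}$-torsion classes are genuine images of the appropriate $d_r$ rather than filtration artifacts, and to rule out hidden differentials and extensions beyond the single $\Zp \to \Z/(2^{q-1})$ collapse. Once these are settled the remaining work is the routine formal-group bookkeeping provided by Lemma \ref{lemma}.
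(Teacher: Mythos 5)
Your strategy is the paper's strategy: filter $E(2)^*[[\uhat]]/[2^q](\uhat)$ by powers of $\uhat$, inherit the differentials from the surjection $E(2)^*(\C\PP^\infty)\fib E(2)^*(B\Zqq)$, and track the relation $[2^q](\uhat)=0$ (in the form $2^q\uhat+2^{q-1}\vhat_1\uhat^2\equiv 0 \bmod \uhat^3$ from Lemma \ref{lemma}) through the two-stage computation of $d_1$, then $d_3$ and $d_7$. Your accounting of where each new $2^{q-1}$-multiple generator comes from, and of the $\Zp\to\Z/(2^{q-1})$ collapse on $\{2v_2\uhat,2\uhat^2\}$, matches the paper's computation of $d_{1,1}$ and $d_{1,2}$.

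There is, however, one genuine missing ingredient, and it sits exactly at the point you defer to as ``ruling out hidden differentials.'' Naturality from $\C\PP^\infty$ determines $d_3$ and $d_7$ only on classes that survive to the corresponding page for $\C\PP^\infty$; but the new classes $2^{q-1}\uhat^{2k+1}$ (in particular $2^{q-1}\uhat$, $2^{q-1}\uhat^3$, $2^{q-1}\uhat^5$) are already dead in the $\C\PP^\infty$ spectral sequence after $d_{1}$, so the surjection gives no information about $d_3$ or $d_7$ on them, and degree counting alone does not obviously exclude a nonzero $d_3$ out of $2^{q-1}\uhat$. The paper closes this gap with a second comparison, in the opposite direction of the one you use: the map $B\Zqq\ra B\Zq$ sends $\uhat$ to $[2^{q-1}](\uhat)=2^{q-1}\uhat+\cdots$, and since $ER(2)^*(B\Zq)=ER(2)^*[[\uhat]]/[2](\uhat)$ is known, $\uhat$ is a permanent cycle there; hence $2^{q-1}\uhat$, and with it $2^{q-1}\uhat^{2k+1}$, are permanent cycles in the BSS for $B\Zqq$. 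Only then is $d_3$ forced onto $v_2^2$ times these generators (via $d_3(v_2^2)=\vhat_1v_2^{-4}$ and the relation $\vhat_1\uhat^2=\vhat_2\uhat^4$ inherited from \eqref{rel}), producing the stated $x^3$- and $x^7$-torsion. Without some substitute for this input your argument does not pin down the differentials on the odd-power classes, which is precisely where the answer for $B\Zqq$ differs from that for $\C\PP^\infty$.
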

Note that the $x^7$-torsion generators are just $z$, $\pontr$, and $z \pontr$.

The case for $ER(2)^*(\C \PP^n)$ is significantly more
complicated to state.  We break it up into a series of theorems.

\begin{thm}
There is a filtration on the reduced $ER(2)^*(\C \PP^{2j})$ such that
we can identify the representatives of the $x$ and $x^3$-torsion
generators for $ER(2)^*(\C \PP^{2j})$ in the associated
graded object as follows:

The $x^1$-torsion generators are:
\[
\Zp[\vhat_1,v_2^{\pm 2},\uhat^2]/(\uhat^{2j}) \{2 v_2 \uhat,
 2  \uhat^2\}
\qquad
\Zq[\vhat_1,v_2^{\pm 2},\uhat^2]/(\uhat^{2j}) \{ \vhat_1 \uhat^2\}.
\]

The $x^3$-torsion generators are
$
\Zq[v_2^{\pm 4}, \uhat^2]/(\uhat^{2j-2})\{\uhat^4\}.
$
\end{thm}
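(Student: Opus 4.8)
The plan is to deduce the finite case from the infinite one of Theorem~\ref{cp} by means of the inclusion $i\colon \C\PP^{2j}\hookrightarrow \C\PP^\infty$. On $E(2)$-cohomology $i$ induces the quotient
\[
i^*\colon E(2)^*(\C\PP^\infty)=E(2)^*[[\uhat]]\lra E(2)^*(\C\PP^{2j})=E(2)^*[\uhat]/(\uhat^{2j+1}),
\]
surjective with kernel $(\uhat^{2j+1})$. Since the exact couple $(\ref{exactcouple})$ is natural in the space, $i$ induces a morphism of Bockstein spectral sequences $E_r(\C\PP^\infty)\to E_r(\C\PP^{2j})$ which is the surjection $i^*$ on $E_1$ and commutes with every $d_r$. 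As in the $\C\PP^\infty$ computation I would work in the associated graded for the filtration by powers of $\uhat$ of Lemma~\ref{lemma}. There $d_1(\uhat^k)=v_2^{-3}(\uhat^k-c(\uhat)^k)$ has leading term $2v_2^{-3}\uhat^k$ when $k$ is odd, while for $k$ even the classes $\uhat^k$ and $c(\uhat)^k$ agree to leading order and every surviving term of $d_1(\uhat^k)$ has $\uhat$-degree $\ge k+1$. As the remark after Theorem~\ref{cp} records, recovering $d_1$ of the Bockstein spectral sequence uses the first two differentials of this auxiliary filtration, and it is these that produce the two families of $x^1$-torsion generators.

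For the $x^1$-torsion the transport is formal. Because $i^*$ is onto on $E_1$ and commutes with $d_1$,
\[
\im\bigl(d_1^{\C\PP^{2j}}\bigr)=d_1^{\C\PP^{2j}}\bigl(i^*E_1(\C\PP^\infty)\bigr)=i^*\bigl(\im d_1^{\C\PP^\infty}\bigr),
\]
so the $x^1$-torsion generators of $\C\PP^{2j}$ are exactly the images under $i^*$ of those of $\C\PP^\infty$. Applying $i^*$, which annihilates $\uhat^m$ for $m>2j$, to $\Zp[\vhat_1,v_2^{\pm 2},\uhat^2]\{2v_2\uhat,2\uhat^2\}$ and to $\Zq[\vhat_1,v_2^{\pm 2},\uhat^2]\{\vhat_1\uhat^2\}$ keeps precisely the monomials of $\uhat$-degree $\le 2j$, which is the asserted truncation by $(\uhat^{2j})$.

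For the $x^3$-torsion I would pass to the pages $E_2=E_3$ (recall $d_2=0$) and again transport. By Theorem~\ref{cp} the $x^3$-torsion generators of $\C\PP^\infty$ are the even powers $\uhat^4,\uhat^6,\dots$ over $\Zq[v_2^{\pm 4},\uhat^2]$, each occurring as a value of $d_3$ which raises the $\uhat$-degree. The even-power cycle representatives of $E_3(\C\PP^\infty)$ map under $i^*$ to the literal monomials $\uhat^{2m}$ with $2m\le 2j$, and a $d_3$-target $\uhat^{2m}$ persists in $\C\PP^{2j}$ exactly when $2m\le 2j$. This reproduces the $\C\PP^\infty$ list up to and including the top class $\uhat^{2j}$, yielding $\Zq[v_2^{\pm 4},\uhat^2]/(\uhat^{2j-2})\{\uhat^4\}$; for $j=1$ this quotient is zero, matching the absence of $x^3$-torsion on $\C\PP^2$.

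The hard part is the boundary behaviour at the top cell, the only place where $\C\PP^{2j}$ genuinely differs from $\C\PP^\infty$. Truncating at $\uhat^{2j+1}=0$ converts into cycles those classes whose $\C\PP^\infty$ differential lay entirely in the killed range; the leading-term analysis above shows the lowest such even power is $\uhat^{2j}$ itself, and that no power $\uhat^k$ with $k\le 2j$ odd becomes a new cycle. I must therefore confirm that through $E_3$ the spectral sequence of $\C\PP^{2j}$ coincides with the $i^*$-image of that of $\C\PP^\infty$, so that $\uhat^{2j}$ enters exactly as the top $x^3$-torsion generator while the truncation creates no unclaimed $x^3$-class. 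I expect the decisive step to be a degree count showing that any differential off the top classes not already accounted for has length $\ge 5$; all the genuinely new phenomena are thereby postponed to the higher differentials $d_5$ and $d_7$, to be recorded in the companion statements for $\C\PP^n$.
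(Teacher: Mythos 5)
Your proposal is correct and follows essentially the same route as the paper: the paper likewise uses the surjection $E(2)^*(\C\PP^\infty)\to E(2)^*(\C\PP^{2j})=E(2)^*[\uhat]/(\uhat^{2j+1})$ to inherit $d_{1,1}$, $d_{1,2}$ (hence $d_1$) and $d_3$ by naturality, and reads off the truncated $x^1$- and $x^3$-torsion exactly as you do. The boundary issue you flag is resolved just as you anticipate: the only new phenomenon through $E_3$ is that $v_2^2\uhat^{2j}$ loses its $d_3$-target and becomes a permanent class, which the paper then disposes of at $d_7$ (a parity/degree count rules out $d_5$ in the even case), so it contributes no unexpected $x^3$-torsion.
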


\begin{thm}
There is a filtration on the reduced $ER(2)^*(\C \PP^{2j+1})$ such that
we can identify the representatives of the $x$ and $x^3$-torsion
generators for $ER(2)^*(\C \PP^{2j+1})$ in the associated
graded object as follows:

The $x^1$-torsion generators are:
\[
\Zp[\vhat_1,v_2^{\pm 2},\uhat^2]/(\uhat^{2j+1}) \{2 v_2 \uhat\}
\qquad
\Zp[\vhat_1,v_2^{\pm 2},\uhat^2]/(\uhat^{2j})\{ 2  \uhat^2\}
\]
\[
\Zq[\vhat_1,v_2^{\pm 2},\uhat^2]/(\uhat^{2j}) \{ \vhat_1 \uhat^2\}.
\]

The $x^3$-torsion generators are:
\[
\Zq[v_2^{\pm 4}, \uhat^2]/(\uhat^{2j-2 })\{\uhat^4\}
\qquad
\Zq [ \vhat_1, v_2^{\pm 4}]\{ \vhat_1 v_2^{2j+1} \uhat^{2j+1}\}.
\]
\end{thm}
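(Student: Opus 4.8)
The plan is to obtain $\C \PP^{2j+1}$ from the even case, already handled in the preceding theorem, together with a single top cell. I would use the cofibration $\C \PP^{2j} \ra \C \PP^{2j+1} \ra S^{4j+2}$. Since $E(2)^*$ of each space is free, this gives a short exact sequence of $E_1$-pages that is compatible with every Bockstein differential, as all differentials are induced by the functorial exact couple (\ref{exactcouple}); thus we obtain a short exact sequence of Bockstein spectral sequences. The quotient is the BSS of $\C \PP^{2j}$, whose $x^1$- and $x^3$-torsion generators are exactly the ``bulk'' families $\{2v_2\uhat,\,2\uhat^2,\,\vhat_1\uhat^2\}$ (truncated at $\uhat^{2j}$) and $\{\uhat^4\}$ claimed here. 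Everything new must therefore come from the sub-BSS carried by the top cell $E(2)^*\{\uhat^{2j+1}\}$, so the real content is to compute that piece and to understand the gluing.

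On the top cell the formal-group corrections disappear: by Lemma \ref{lemma}, $c(\uhat)^{2j+1} = -\uhat^{2j+1} + (2j+1)\vhat_1\uhat^{2j+2} + \cdots$, and every term past the first lies in $(\uhat^{2j+2})$, which vanishes in $\C \PP^{2j+1}$. Hence $c(\uhat^{2j+1}) = -\uhat^{2j+1}$ \emph{exactly}, and on $E(2)^*\{\uhat^{2j+1}\}$ the first Bockstein becomes $d_1(a\,\uhat^{2j+1}) = v_2^{-3}(1+c)(a)\,\uhat^{2j+1}$, i.e. the point BSS of Section \ref{preliminaries} with $(1-c)$ replaced by $(1+c)$. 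Because $\uhat^{2j+1}$ itself carries an odd power of $v_2$, the $(1+c)$-cycles are precisely the classes of even total $v_2$-degree, exactly as for a point, while $d_1(v_2^{2s}\uhat^{2j+1}) = 2v_2^{2s-3}\uhat^{2j+1}$ yields the $x^1$-torsion generator $2v_2\uhat^{2j+1}$. This is the term that lengthens the $2v_2\uhat$-family to $\uhat^{2j+1}$, while leaving the other two $x^1$-families stuck at $\uhat^{2j}$.

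Past $E_1$ the sign $c = -1$ on $\uhat^{2j+1}$ is invisible mod $2$, so the higher Bocksteins on the top cell agree with those of a point; in effect the top-cell sub-BSS is a shifted copy of the point computation placed on $\uhat^{2j+1}$. Applying $d_3$ with $d_3(v_2^2) = \vhat_1 v_2^{-4}$ to the surviving classes (writing $\uhat = u v_2^3$ to track the $v_2$-content) sends the appropriate even-degree survivor to $\vhat_1 v_2^{2j+1}\uhat^{2j+1}$; a direct check shows the coefficient of this $d_3$ is $4j+5$, which is odd for every $j$, so the differential is nonzero. Letting the $v_2^{\pm 4}$- and $\vhat_1$-multiples vary then sweeps out precisely the extra $x^3$-torsion family $\Zq[\vhat_1,v_2^{\pm 4}]\{\vhat_1 v_2^{2j+1}\uhat^{2j+1}\}$, matching the statement.

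It remains to glue the two pieces through the short exact sequence of spectral sequences. The one crossing term is the connecting differential $d_1(\uhat^{2j}) = 2j\,\vhat_1 v_2^{-3}\uhat^{2j+1}$, which carries the top class of the $\C \PP^{2j}$ bulk into the top cell; its coefficient $2j$ is even, so its image is $2$ times a class already sitting in the $\Zp[\vhat_1,v_2^{\pm 2},\uhat^2]\{2v_2\uhat\}$ family and it vanishes on passing mod $2$. Consequently no new generator is created and the $x^3$-class $\vhat_1 v_2^{2j+1}\uhat^{2j+1}$ is untouched, so at $E_2$ and beyond the top cell and the bulk split and give exactly the asserted $x^1$- and $x^3$-torsion. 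The main obstacle is making this edge bookkeeping airtight: I must confirm that the short exact sequence of spectral sequences hides no crossing differential beyond the $2$-divisible one identified above, that after passing to the associated graded of the auxiliary $\uhat$-filtration the connecting image does not secretly promote $\vhat_1\uhat^{2j+1}$ to an independent $x^1$-generator, and that the $(1+c)$-twisted top-cell BSS produces no extra low $x$-torsion feeding back into the bulk. Keeping the integral $d_1$, its mod $2$ reduction, and the auxiliary filtration mutually consistent right at $\uhat^{2j+1}$ is where the care is needed.
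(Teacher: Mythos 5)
Your proposal is correct and follows essentially the same route as the paper: the bulk families are inherited from $\C \PP^{2j}$ (the paper gets them from the surjection off $E(2)^*(\C \PP^\infty)$), and the two new families $2v_2\uhat^{2j+1}$ and $\Zq[\vhat_1,v_2^{\pm 4}]\{\vhat_1 v_2^{2j+1}\uhat^{2j+1}\}$ come from the top cell, which the paper likewise identifies with a shifted copy of the point BSS via naturality along $\C \PP^{2j+1}\ra S^{4j+2}$ and the permanent cycle $u^{2j+1}=v_2^{2j-3}\uhat^{2j+1}$. The one blemish is your ``coefficient $4j+5$'' for the top-cell $d_3$: once the top cell carries the sphere's BSS, that differential is simply $d_3(v_2^2\, u^{2j+1})=\vhat_1 v_2^{-4}u^{2j+1}$ with no $j$-dependent coefficient to check, and your crossing-term bookkeeping is exactly what the paper's inheritance argument disposes of.
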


\begin{thm}
There is a filtration on $ER(2)^*(\C \PP^{8k+i})$ such that
we can identify the representatives of the $x^5$-torsion
and $x^7$-torsion 
generators for 
the reduced
$ER(2)^*(\C \PP^{8k+i})$ in the associated
graded object as follows:

For all $i$ there are $x^7$-torsion 
generators $\Zq[\vhat_2^{\pm 1}]\{\uhat^2\}$.

The $x^5$-torsion generators are:
\begin{enumerate}
\item[]
For $i=1$:\hspace{.5in}
$\Zq[v_2^{\pm 4}]\{v_2^5 \uhat^{8k+1}\}$
\item[]
For $i=5$:\hspace{.5in}
$\Zq[v_2^{\pm 4}]\{v_2 \uhat^{8k+5}\}$
\end{enumerate}

The rest of the $x^7$-torsion generators are:
\begin{enumerate}
\item[]
For $i=0$:\hspace{.5in}
$\Zq[\vhat_2^{\pm 1}]  \{
v_2^{6} \uhat^{8k}\}
$
\item[]
For $i=2$:\hspace{.5in}
$
\Zq[\vhat_2^{\pm 1}]  \{
v_2^{2} \uhat^{8k+2}\}
$
\item[]
For $i=3$:\hspace{.5in}
$
\Zq[\vhat_2^{\pm 1}] \{v_2^2 \uhat^{8k+2},  v_2^{7} \uhat^{8k+3}\}
$
\item[]
For $i=4$:\hspace{.5in}
$
\Zq [\vhat_2^{\pm 1}] \{ v_2^{2} \uhat^{8k+4}\}
$
\item[]
For $i=6$:\hspace{.5in}
$
\Zq[\vhat_2^{\pm 1}]  \{ v_2^{6} \uhat^{8k+6}\}
$
\item[]
For $i=7$:\hspace{.5in}
$
\Zq[\vhat_2^{\pm 1}] \{v_2^6 \uhat^{8k+6},  v_2^{3} \uhat^{8k+7}\}
$
\end{enumerate}
\end{thm}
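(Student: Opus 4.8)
The plan is to run the Bockstein spectral sequence for $X=\C\PP^n$ and to compare it page by page with the BSS for $\C\PP^\infty$ of Theorem~\ref{cp}, using the restriction map induced by $\C\PP^n\hookrightarrow\C\PP^\infty$. On $E_1$ this map is the surjection $E(2)^*[[\uhat]]\twoheadrightarrow E(2)^*[[\uhat]]/(\uhat^{n+1})$, so the BSS for $\C\PP^n$ is exactly the BSS for $\C\PP^\infty$ with every monomial of $\uhat$-filtration $> n$ set to zero. The organizing principle is that $\pontr$ is an honest $ER(2)$-class with $\pontr=-\uhat^2$ in the associated graded (Lemma~\ref{lemma}), so every differential is $\uhat^2$-linear. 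It therefore suffices to know the differentials on the coefficient classes and on $\uhat$, computed from $d_1(y)=v_2^{-3}(1-c)(y)$ and the expansions of $c(\uhat)$ in Lemma~\ref{lemma}, and then to propagate them by multiplication by $\uhat^2=-\pontr$ up to the truncation.

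Away from the top dimension the two spectral sequences agree, so the $x^1$- and $x^3$-torsion together with the generic $x^7$-torsion generator $\Zq[\vhat_2^{\pm1}]\{\uhat^2\}$ are inherited verbatim from $\C\PP^\infty$; this is the ``for all $i$'' statement. The remaining generators are a boundary effect at the top. Since the corrections to $c(\uhat)$ only add higher powers of $\uhat$, no differential lowers the $\uhat$-filtration, so once $\uhat^{n+1}=0$ the top class $\uhat^n$ can no longer support a differential. Working with the version of the BSS that converges to zero, $\uhat^n$ must then die by being hit, i.e. it is forced to become the target of a later differential; its original $\C\PP^\infty$ partner having been truncated, the class that hits it is a rematched $d_5$ or $d_7$ issuing from a lower-filtration source whose own $\C\PP^\infty$ target has likewise been removed. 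I would organize the entire analysis around the top classes $\uhat^n$ and $\uhat^{n-1}$, tracing these rematchings explicitly.

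The eight cases come from the $v_2$-periodicities of the coefficient differentials: $d_1$ detects the $v_2$-exponent modulo $2$, $d_3$ modulo $4$, and $d_7$ modulo $8$ (recall $v_2^8=\vhat_2^{-1}$ is a permanent cycle), so the differential that finally captures the top class is pinned down by a congruence modulo $8$. Because $\uhat=uv_2^3$ carries three factors of $v_2$, raising the $\uhat$-filtration by one shifts the effective $v_2$-exponent by $3$, and it is the interaction of $n\bmod 8$ with this shift that decides which differential the orphaned boundary class falls into. For each $i$ I would extract the leading term of the governing differential on $v_2^c\uhat^n$, and for $i=3,7$ also on $v_2^{c'}\uhat^{n-1}$, from the mod~$2$ expansion $c(\uhat)=\uhat+\vhat_1\uhat^2+\vhat_1^2\uhat^3+\vhat_2\uhat^4$, check whether its target has been truncated, and name the surviving generator. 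That only $i\equiv1\pmod4$, namely $i=1,5$, yields $x^5$-torsion---recorded over $\Zq[v_2^{\pm4}]$, matching the $v_2^4$-periodicity of $d_5$---while the other residues redistribute into extra $x^7$-torsion over $\Zq[\vhat_2^{\pm1}]$, and that $i=3,7$ produce two generators rather than one, should all fall out of this residue computation.

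The hard part is the precise leading-term bookkeeping at the truncation boundary. One must carry enough terms of the formal-group corrections in $d_r(\uhat)$ to be certain which monomial is the true leading term once $\uhat^{n+1}=0$, and then verify that the orphaned classes rematch exactly as claimed: that each listed $v_2^c\uhat^{\,\bullet}$ is genuinely hit by $d_5$ or $d_7$, that none of them is secretly a boundary of an earlier differential, and that no further generators are produced. Equivalently, one must confirm that the auxiliary $\uhat$-filtration spectral sequence used to evaluate the BSS degenerates in the expected range, so that the associated-graded representatives displayed in the statement are both correct and complete. The eightfold split is unavoidable precisely because the truncation meets the $d_1/d_3/d_7$ pattern differently in each residue class modulo $8$, and it is in this bookkeeping, rather than in the conceptual comparison with $\C\PP^\infty$, that essentially all of the work resides.
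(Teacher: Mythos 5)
Your global strategy --- inherit everything from $\C\PP^\infty$ via the surjection on $E_1$, isolate the boundary effect on the top one or two $\uhat$-filtrations, and let the residue of $n$ mod $8$ decide the outcome --- is exactly the paper's skeleton, and your identification of the generic $x^7$-generator $\Zq[\vhat_2^{\pm1}]\{\uhat^2\}$ and of the eightfold case split is right. But there is a genuine gap in how you propose to pin down the differentials on the orphaned top classes. You say you will ``extract the leading term of the governing differential on $v_2^c\uhat^n$ from the mod $2$ expansion $c(\uhat)=\uhat+\vhat_1\uhat^2+\vhat_1^2\uhat^3+\vhat_2\uhat^4$.'' That expansion computes $d_1$ only (and, via the extension $\vhat_1\uhat^2=\vhat_2\uhat^4$, feeds $d_3$); it gives no access to $d_5$ or $d_7$, which come from lifting through the exact couple and are not formal-group-law data. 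Degree counting narrows the possibilities but does not close them: for $i\equiv 0\pmod 4$ both top classes $v_2^2\uhat^n$ and $v_2^6\uhat^n$ survive to $E_7$ in the \emph{same} $\uhat$-filtration, one must be the source and one the target of a $d_7$, and nothing in your residue bookkeeping decides which (note also that this is a same-filtration differential, not the ``rematched differential issuing from a lower-filtration source'' your picture describes); and for $i\equiv 1\pmod 4$ you must rule out that the permanent cycle $u^n$ is hit by a $d_7$ rather than the claimed $d_5$.

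The paper closes these gaps with two inputs you do not mention. First, naturality for the collapse map $\C\PP^n\to S^{2n}$ shows $u^n=v_2^{-3n}\uhat^n$ is a permanent cycle, hence must be a target; this settles $i=2,6$ outright and produces the sphere generators for odd $i$. Second --- and this is the real engine --- the AHSS for $ER(2)^*(\C\PP^\infty)$ computed in Section \ref{AHSS} is fed back in: the known AHSS differential $d_4(x^2u^{8k+\{-1,3\}})=x^5u^{8k+\{1,5\}}$ forces the boundary differential to be a BSS $d_5$ in the $i=1,5$ cases, and a filtration argument in the AHSS (determining in which filtration $\tfrac12\alpha_1 u^m$ and $\tfrac12\alpha_3 u^m$ can be represented) orients the top-filtration $d_7$ for $i=0,4$. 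Without some substitute for this input (the paper notes $\C\PP^{2j+1}/\C\PP^{2j-1}$ as an alternative), your plan cannot distinguish $d_5$ from $d_7$ or orient the ambiguous $d_7$'s, which is precisely where the content of the theorem lies.
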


\section{
$ER(2)^*(\C \PP^\infty)$
}
\label{bss1}

In this section we give a quick and dirty computation of 
$ER(2)^*(\C \PP^\infty)$.
This is done with complete detail and real finesse in \cite{Vitaly}
for all $ER(n)$, but
what we do here is enough for our purposes.

We filter again and use an auxiliary
spectral sequence to compute the BSS for $\C \PP^\infty$.
As a result, even our BSS computation results are given in terms
of an associated graded object.  We will abuse the notation
and continue to call these terms $E_r$.

\begin{thm}
Filtering $E(2)^*(\C \PP^\infty) = E(2)^*[[\uhat]]$ by powers
of $\uhat$ we give $E_r$ as an associated graded object of
the actual $E_r$ of the BSS for the reduced $ER(2)^*(\C \PP^\infty)$.
\begin{enumerate}
\item[]
$E_1 =   
E(2)^*[\uhat] \{\uhat\}  $
\item[]
$E_2 = E_3 =  
\Zq[v_2^{\pm 2},
\uhat^{2}] 
\{\uhat^{2}\} $
\item[]
$E_4 = E_5 = E_6 = E_7 =  
\Zq[v_2^{\pm 4} ] \{\uhat^2\} $
\item[]
$E_8 = 
0. $
\end{enumerate}
\end{thm}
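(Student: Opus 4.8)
The plan is to run the Bockstein spectral sequence one differential at a time, using the filtration by powers of $\uhat$ exactly as the point computation recalled in Section \ref{preliminaries} does for $E(2)^*$, the new feature being that $c(\uhat) = -\uhat + \vhat_1 \uhat^2 + \cdots$ of Lemma \ref{lemma} makes each nonzero $d_r$ split, in the auxiliary (filtration) spectral sequence, into a filtration-preserving leading part and filtration-raising corrections. First I record $E_1$: the reduced $E(2)^*(\C\PP^\infty)$ is $\uhat E(2)^*[[\uhat]]$, whose associated graded for the $\uhat$-filtration is $E(2)^*[\uhat]\{\uhat\}$. Since every class sits in even total degree and $d_r$ has the odd degree $17r+1$ for $r$ even, we get $d_2 = d_4 = d_6 = 0$ immediately.

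The crux is $d_1 = v_2^{-3}(1-c)$, which the remark following Theorem \ref{cp} warns is computed by \emph{two} differentials in the filtered picture. Writing a monomial as $v_2^m \vhat_1^b \uhat^n$ (so $c(v_2) = -v_2$ while $c(\vhat_1) = \vhat_1$), the leading, filtration-preserving part uses $c(\uhat) \equiv -\uhat$ and reads $v_2^{-3}\bigl(1-(-1)^{m+n}\bigr) v_2^m \vhat_1^b \uhat^n$: it annihilates the $m+n$ even monomials and doubles the $m+n$ odd ones, so this stage leaves the mod $2$ span of the $m+n$ even monomials and produces the first family of $x^1$-torsion generators $\Zp[\vhat_1,v_2^{\pm 2},\uhat^2]\{2v_2\uhat,\, 2\uhat^2\}$. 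Next I extract the filtration-raising part coming from the $\vhat_1 \uhat^2$ term of $c(\uhat)$; on an $m+n$ even monomial it contributes $n\, v_2^{m-3}\vhat_1^{b+1}\uhat^{n+1}$, which is nonzero mod $2$ exactly when $n$ is odd. This second differential kills every surviving monomial carrying a positive power of $\vhat_1$ or an odd power of $\uhat$, its image being the second family $\Zq[\vhat_1,v_2^{\pm 2},\uhat^2]\{\vhat_1\uhat^2\}$, and leaves precisely $E_2 = \Zq[v_2^{\pm 2},\uhat^2]\{\uhat^2\}$.

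Having $E_2 = E_3$ by parity, I treat $d_3$ by importing the point formula $d_3(v_2^2) = \vhat_1 v_2^{-4}$; since $\vhat_1$ no longer survives to $E_3$, the filtration reincarnates it, so that the relevant term on $v_2^2\uhat^{2n}$ raises $\uhat$-filtration to give $v_2^{-4}\uhat^{2n+2}$. Thus $d_3$ is the mod $2$ derivation, linear over $\Zq[v_2^{\pm 4},\uhat^2]$, with $d_3(\uhat^2)=0$ and $d_3(v_2^2) = v_2^{-4}\uhat^2$; a short bookkeeping check gives kernel $\Zq[v_2^{\pm 4},\uhat^2]\{\uhat^2\}$ and image the $x^3$-torsion $\Zq[v_2^{\pm 4},\uhat^2]\{\uhat^4\}$, whence $E_4 = \Zq[v_2^{\pm 4}]\{\uhat^2\}$. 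Then $d_4 = d_6 = 0$ by parity and $d_5 = 0$ by comparison with the point (there is no $x^5$-torsion, and with only $\uhat^2$ left there is no target for a filtration-raising correction), so $E_4 = E_5 = E_6 = E_7$. Finally $d_7$ is the point differential carrying $\uhat^2$ along unchanged: it is $\Zq[v_2^{\pm 8}]$-linear with $d_7(\uhat^2)=0$ and $d_7(v_2^4\uhat^2) = v_2^{-16}\uhat^2$, an isomorphism between the two free rank-one summands, so $E_8 = 0$ with image the $x^7$-torsion $\Zq[\vhat_2^{\pm 1}]\{\uhat^2\}$.

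The main obstacle is precisely the two-stage analysis of $d_1$, together with the analogous $\vhat_1 \leftrightarrow \uhat^2$ trade inside $d_3$: one must honestly track how the higher terms of $c(\uhat)$ feed the filtration-raising differentials and verify that exactly the $\vhat_1$-divisible and odd-$\uhat$-power classes are removed. It is this phenomenon, invisible in the leading-order differential, that distinguishes the $\C\PP^\infty$ answer from a naive tensor product of the point computation with $E(2)^*[\uhat]$.
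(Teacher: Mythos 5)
Your overall strategy is the same as the paper's: filter by powers of $\uhat$, compute $d_1$ in two stages ($d_{1,1}$ using $c(\uhat)\equiv -\uhat$ and $d_{1,2}$ using the $\vhat_1\uhat^2$ term), then dispose of the remaining differentials by comparison with the point and by degree/filtration arguments. The $d_1$ analysis, the parity arguments, and the $d_5$, $d_7$ bookkeeping all match the paper and are essentially correct.

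The gap is at $d_3$, which is the crux of the whole computation. You assert that because $\vhat_1$ dies in the associated graded $E_3$, ``the filtration reincarnates it'' as a higher power of $\uhat$, and you write $d_3(v_2^2)=v_2^{-4}\uhat^2$. First, this formula is in the wrong degree: $\vhat_1$ has degree $16$ while $\uhat^2$ has degree $-32$, so the correct substitution is $\vhat_1\uhat^2 = \vhat_2\uhat^4$ (your version is off by the unit $\vhat_2=v_2^{-8}$; the image module happens to be unchanged since $v_2^{\pm 4}$ is inverted, but the formula as written cannot be right). Second, and more seriously, the reincarnation itself is asserted rather than proved: a priori $\vhat_1$ could simply act as zero on the actual $E_3$, in which case $d_3$ would vanish identically on $\Zq[v_2^{\pm 2},\uhat^2]\{\uhat^2\}$ and the answer would be different. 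The paper resolves this by (a) observing that $\pontr$ lifts to $ER(2)^*(\C\PP^\infty)$ and maps to $\uhat c(\uhat)$, so $\vhat_1\pontr\neq 0$ must be represented in $E_3$, and (b) explicitly computing $d_1(v_2^3\uhat)=\vhat_1\uhat^2+\vhat_1^2\uhat^3+\vhat_2\uhat^4$ (mod $2,\uhat^5$), which must vanish in $E_2$ and, after substituting $\uhat=c(\uhat)+\cdots$, yields the precise relation $0=\vhat_1\uhat^2+\vhat_2\uhat^4$. Without some such argument your computation of the $x^3$-torsion and of $E_4$ is unsupported. Relatedly, you use $d_3(\uhat^2)=0$ and $d_7(\uhat^2)=0$ without justification; neither is forced by degree alone (e.g.\ $v_2^{-14}\uhat^4$ and $v_2^{-20}\uhat^2$ are degree-legal targets), and both require the fact that $\uhat c(\uhat)=\pontr$ is a permanent cycle, i.e.\ Lemma 2.3 of the paper, which you should invoke explicitly.
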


\begin{proof}
We are first going to compute a spectral sequence for computing $d_1$.
This spectral sequence will collapse after the first two differentials,
which we will call $d_{1,1}$ and $d_{1,2}$.
Our spectral sequence for computing $d_1$ comes from
filtering by powers of $\uhat$.  

From the computation of $d_1$ for $ER(2)^*$, we know that
$d_1$ commutes with $v_2^2$ and $\vhat_1$. 
Filtering by the powers of $\uhat$, we have
$c(\uhat) = - \uhat$ (from Lemma \ref{lemma}) 
in the associated graded object, so
\[
d_{1,1}(\uhat) = v_2^{-3}(1-c)\uhat = v_2^{-3}(\uhat - (-1)\uhat)= 2v_2^{-3}\uhat.
\]
\[
d_{1,1}(\uhat^2) = v_2^{-3}(1-c)\uhat^2 = v_2^{-3}(\uhat^2 - (-1)^2\uhat^2)= 0.
\]
\[
d_{1,1}(v_2 \uhat) = v_2^{-3}(1-c)v_2 \uhat 
= v_2^{-3}(v_2 \uhat - (-1)^2 v_2 \uhat)= 0.
\]
\[
d_{1,1}(v_2 \uhat^2) = v_2^{-3}(1-c)v_2 \uhat^2 
= v_2^{-3}(v_2 \uhat^2 - (-1)^3 v_2 \uhat^2)= 2 v_2^{-2} \uhat^2.
\]

We can now read off the first term of the $x$-torsion in
Theorem \ref{cp}.
After taking the homology with respect to $d_{1,1}$, all
we have left is:
\[
\Zq[\vhat_1, v_2^{\pm 2},\uhat^2]\{ v_2 \uhat \}
\quad
\text{and}
\quad
\Zq[\vhat_1,v_2^{\pm 2}, \uhat^2]\{\uhat^{2}\}.
\]

There is one more computation to do to finish off $d_1$.
The second differential, $d_{1,2}$, in the spectral sequence
to compute $d_1$ 
requires the use of not just $\uhat^j$ but $\uhat^{j+1}$.
We are now working mod 2 so
we have $c(\uhat) = \uhat + \vhat_1 \uhat^2$ (from
Lemma \ref{lemma}).  Recall again
that $d_1$ commutes with $\vhat_1$ and $v_2^2$.  
We have our second differential:
\[
d_{1,2}(v_2 \uhat) = v_2^{-3} (1+c)v_2 \uhat = v_2^{-3} 
(v_2 \uhat + c(v_2 \uhat)) 
\qquad
\qquad
\qquad
\]
\[
\qquad
\qquad
\qquad
= 
v_2^{-3}(v_2 \uhat +  v_2 ( \uhat + \vhat_1 \uhat^2))
=
v_2^{-2} ( \vhat_1 \uhat^2).
\]
\[
d_{1,2}( \uhat^2) = v_2^{-3} (1+c) \uhat^2 = v_2^{-3} 
( \uhat^2 + c( \uhat^2)) 
\qquad
\qquad
\qquad
\]
\[
\qquad
\qquad
\qquad
= 
v_2^{-3}( \uhat^2 +   ( \uhat + \vhat_1 \uhat^2)^2)
=
v_2^{-3} ( 2 \uhat^2 + 2 \vhat_1 \uhat^3) = 0.
\]

This gives us our final term of $x$-torsion in Theorem \ref{cp}.
All that remains after taking the homology with respect to $d_{1,2}$ 
is just our stated $E_2$ term.

With the degree of $v_2$ equal to -6 and the degree of $\uhat$
equal to -16, we see that this is all in degrees multiples of 4,
but the differential $d_1$ has degree 18, which equals 2 mod 4,
so there can be no more to the differential $d_1$, so this is 
an associated graded version of $E_2$.

The differential $d_2$ is odd degree, so it is zero and we have
computed $E_3$.
Although we have only computed an associated graded version of $E_3$, 
we know 
that $2x=0$, and all $x$-torsion was detected by $d_1$.  Consequently,
the real $E_3$ is a $\Zq$-vector space.  Any extensions only 
involve $\vhat_1$.

We have $\vhat_1$ acts trivially on our associated graded version of $E_3$,
but it isn't actually zero on $E_3$.  We need
to solve this extension problem.
Because we know that $\pontr \in ER(2)^*(\C \PP^\infty)$ maps to
$\uhat c(\uhat) \in E(2)^*(\C \PP^\infty)$, $\vhat_1 \pontr \ne 0$ and
so must have a representative in our $E_2 = E_3$.

Working mod 2 and mod $\uhat^5$,
recall 
from Lemma \ref{lemma} that
we have
\[
c(\uhat) =
\uhat + \vhat_1 \uhat^2 + \vhat_1^2 \uhat^3 + \vhat_2 \uhat^4.
\]
Now take 
\[
d_1(v_2^3 \uhat) =
v_2^{-3} (v_2^3 \uhat +  v_2^3(
\uhat + \vhat_1 \uhat^2 + \vhat_1^2 \uhat^3 + \vhat_2 \uhat^4))
= 
  \vhat_1 \uhat^2 + \vhat_1^2 \uhat^3 + \vhat_2 \uhat^4.
\]
This has to be zero in $E_2$ (mod 2 and $\uhat^5$).  
Now, in the first term, substitute 
\[
\uhat 
=
c(\uhat) + \vhat_1 \uhat^2 + \vhat_1^2 \uhat^3 
\]
for one of the $\uhat$ to get
\begin{multline*}
d_1(v_2^3 \uhat) =
  \vhat_1 \uhat^2 + 
\vhat_1^2 \uhat^3 
+ \vhat_2 \uhat^4
\\
=
  \vhat_1 \uhat   
(c(\uhat) + \vhat_1 \uhat^2 + \vhat_1^2 \uhat^3 )
+ \vhat_1^2 \uhat^3 + 
\vhat_2 \uhat^4
\\
=
\vhat_1 \uhat c(\uhat) + 
\vhat_2 \uhat^4 +
\vhat_1^3 \uhat^4 .
\end{multline*}
The last term here does not exist in our $E_2$ so must be
represented in a higher filtration.  Our representative
for $\vhat_1 \pontr$ is thus $\vhat_2 \uhat^4$.  Since
$\pontr$ is represented by $\uhat^2$, we have
\begin{equation}
\label{rel}
0 = \vhat_1 \uhat^2 + \vhat_2 \uhat^4.
\end{equation}
We are ready to compute $d_3$ on
$
\Zq[v_2^{\pm 2},
\uhat^{2}]
\{\uhat^{2}\}.
$
We know that $\uhat c(\uhat) = \pontr$ is a permanent cycle from Lemma \ref{lemma}, 
but this
is, mod $\uhat^5$, just
$
\uhat^2 + \vhat_1 \uhat^3 + \vhat_1^2 \uhat^4.
$
The last two terms would have to be represented in higher filtrations
so they are zero 
mod $\uhat^5$.  So, modulo $\uhat^5$, we have that $d_3(\uhat^2) = 0$.

Compute 
\[
d_3(v_2^2 \uhat^2) = d_3(v_2^2) \uhat^2 
=
\vhat_1 v_2^{-4} \uhat^2
=
v_2^{-4} \vhat_2 \uhat^4
=
v_2^{-12} \uhat^4.
\]

This gives the $x^3$-torsion of Theorem \ref{cp}.
Since $d_3$ commutes with $v_2^4$ and $\uhat^2$, the homology gives $E_4$
as stated.

Elements of $E_4$ are spaced out by the degree of $v_2^4$, or, -24.
The differentials $d_4$, $d_5$, and $d_6$ have degrees 69, 86, and 103,
and these are all non-zero mod 24, so these differentials are all
trivial.

We know that $\uhat c(\uhat)$ is a permanent cycle, so our differential
must be on the $v_2^4$ as in the coefficients.
We get
\[
d_7(v_2^4 \uhat^2) =\vhat_2 v_2^{-8}\uhat^2
= v_2^{-16} \uhat^2 = \vhat_2^2 \uhat^2.
\]

This gives our $x^7$-torsion for Theorem \ref{cp}
and $E_8 = 0$.
\end{proof}

This also completes our description of $ER(2)^{16*}(\C \PP^\infty)$ 
in Theorems \ref{thmone} and \ref{thmtwo}.

\section{$ER(2)^*(B\Zqq)$}
\label{bss2}

As mentioned in the introduction,
\[
E(2)^*(B\Zqq)= E(2)^*[[\uhat]]/[2^q]_{\hat{F}}(\uhat)
\] 
and
\[
ER(2)^*(B\Zq)= ER(2)^*[[\uhat]]/[2]_{\hat{F}}(\uhat).
\] 
As we saw above, $\uhat \in E(2)^{-16}(\C \PP^\infty)$ is not a permanent
cycle in the BSS for $\C \PP^\infty$,
but this result for $\R \PP^\infty = B\Zq$ shows that 
$\uhat \in E(2)^*(B\Zq)$ and all its powers must be permanent cycles
in the BSS for $B\Zq$.
The BSS for this is described in \cite[Theorem 8.1]{NituP}
but redone here.

\begin{thm}
Filtering $E(2)^*(B\Zqq ) = E(2)^*[[\uhat]]/([2^q](\uhat))$ by powers
of $\uhat$ we give $E_r$ as an associated graded object of
the actual $E_r$ of the reduced BSS for $ER(2)^*(  B\Zqq)$.
\begin{enumerate}
\item[]
$E_1 =   
\Zqq [ \vhat_1, v_2^{\pm 1}, \uhat ]\{ \uhat \} $
\item[]
$E_2 = E_3 =
\Zq[\vhat_1,v_2^{\pm 2}] \{ 2^{q-1} \uhat \}
\qquad
\Zq[v_2^{\pm 2},\uhat^2] \{ \uhat^2, 2^{q-1}  \uhat^3\} $
\item[]
$E_4 = E_5 = E_6 = E_7 =
\Zq[v_2^{\pm 4} ] \{ 2^{q-1} \uhat, \uhat^2 , 2^{q-1}  \uhat^3 \} $
\item[]
$E_8 = 
0. $
\end{enumerate}
\end{thm}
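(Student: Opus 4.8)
The plan is to mimic, essentially line for line, the auxiliary spectral sequence computation of $ER(2)^*(\C \PP^\infty)$ from the previous section, starting instead from Landweber's $E(2)^*(B\Zqq) = E(2)^*[[\uhat]]/([2^q](\uhat))$ and isolating the two features special to $B\Zqq$: the $\Z/(2^q)$-coefficients and the relation $[2^q](\uhat)=0$. First I would pin down the $E_1$-page. Filtering by powers of $\uhat$, the leading term of the relation is $2^q\uhat$ (Lemma \ref{lemma}), so the coefficient of each positive power of $\uhat$ in the associated graded is $\Z/(2^q)$; rewriting $v_1 = \vhat_1 v_2^3$ and passing to reduced cohomology gives $E_1 = \Zqq[\vhat_1, v_2^{\pm 1}, \uhat]\{\uhat\}$. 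This is the single spot where the torsion-free $\Zp$ of the $\C \PP^\infty$ computation is replaced by $\Z/(2^q)$, and it drives every new $2^{q-1}$-decorated generator.

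Next I would compute $d_1 = v_2^{-3}(1-c)$ by the secondary $\uhat$-filtration, exactly as before. The pieces $\uhat^2$ and $v_2\uhat$ behave identically to the $\C \PP^\infty$ case, with $d_{1,1}(\uhat^2) = 0$, $d_{1,1}(v_2\uhat)=0$, and $d_{1,2}(v_2\uhat) = v_2^{-2}\vhat_1\uhat^2$ pairing $v_2\uhat$ off against $\vhat_1\uhat^2$; this leaves the even part $\Zq[v_2^{\pm 2}, \uhat^2]\{\uhat^2\}$. The change is that on the odd classes $d_{1,1}$ is now multiplication by $2$ on a $\Z/(2^q)$-module, hence has kernel $2^{q-1}\Z/(2^q)$; so $2^{q-1}\uhat$ and $2^{q-1}v_2\uhat^2$ survive $d_{1,1}$ and are the candidate new generators. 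I would then show $d_1(2^{q-1}\uhat) = 0$ in the associated graded: naively $d_1(2^{q-1}\uhat) = 2^{q-1}v_2^{-3}(2\uhat - \vhat_1\uhat^2)$, and substituting the relation $2^q\uhat = -2^{q-1}\vhat_1\uhat^2 \bmod \uhat^3$ turns this into $-2^q v_2^{-3}\vhat_1\uhat^2$, which vanishes in $\gr$. By contrast, the same computation applied to $2^{q-1}v_2\uhat^2$ produces, one filtration higher, the nonzero class $2^{q-1}\vhat_1\uhat^3$. Thus $2^{q-1}v_2\uhat^2$ is killed and, with it, the $\vhat_1$-multiple of $2^{q-1}\uhat^3$; this is exactly why $E_2$ splits as $\Zq[\vhat_1, v_2^{\pm 2}]\{2^{q-1}\uhat\} \oplus \Zq[v_2^{\pm 2}, \uhat^2]\{\uhat^2, 2^{q-1}\uhat^3\}$, with a full $\vhat_1$-tower on $2^{q-1}\uhat$ but none on $2^{q-1}\uhat^3$.

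After that the computation is again parallel to $\C \PP^\infty$. Using $d_3(v_2^2) = \vhat_1 v_2^{-4}$, $d_7(v_2^4) = \vhat_2 v_2^{-8}$, and the relation $\vhat_1\uhat^2 = \vhat_2\uhat^4$ of (\ref{rel}) (together with its consequence $\vhat_1\uhat^3 = \vhat_2\uhat^5$), $d_3$ clears the $\vhat_1$-towers: $d_3(v_2^2\cdot 2^{q-1}\uhat) = 2^{q-1}\vhat_1 v_2^{-4}\uhat$, $d_3(v_2^2\uhat^2) = v_2^{-12}\uhat^4$, and $d_3(v_2^2\cdot 2^{q-1}\uhat^3) = 2^{q-1}v_2^{-12}\uhat^5$, leaving $E_4 = \Zq[v_2^{\pm 4}]\{2^{q-1}\uhat, \uhat^2, 2^{q-1}\uhat^3\}$. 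The degree arguments forcing $d_2 = d_4 = d_5 = d_6 = 0$ are unchanged, and $d_7$ multiplies each generator by $\vhat_2 v_2^{-8} = v_2^{-16}$, giving $E_8 = 0$.

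The main obstacle is honest bookkeeping inside the associated graded in the presence of the hidden extension carried by $[2^q](\uhat)=0$: in $\gr$ the leading relation only records $2^q\uhat = 0$, yet the element $2^q\uhat$ is genuinely nonzero and is detected one filtration up by $-2^{q-1}\vhat_1\uhat^2$. One cannot simply discard $2^q$-multiples; their images under the relation must be carried along, and it is precisely the cancellation between the explicit $c(\uhat)$-term and this hidden-extension term that makes $2^{q-1}\uhat$ a permanent cycle while killing $2^{q-1}v_2\uhat^2$. As for $\C \PP^\infty$, the safe procedure is to establish each differential in the lowest filtration in which both sides are visible, so that every identity is checked in $\gr$ rather than in $ER(2)^*(B\Zqq)$ itself.
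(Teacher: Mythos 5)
Your proposal follows the paper's own proof essentially step for step: the same $E_1$-page, the same two-stage computation of $d_1$ with the hidden extension $2^q\uhat=-2^{q-1}\vhat_1\uhat^2$ explaining why $2^{q-1}\uhat$ survives while $2^{q-1}v_2\uhat^2$ kills the $\vhat_1$-multiples of $2^{q-1}\uhat^3$, the same use of the relation $\vhat_1\uhat^2=\vhat_2\uhat^4$ at $d_3$, and the same degree arguments for $d_2=d_4=d_5=d_6=0$. There is, however, one genuine gap, and it sits at precisely the one point where $B\Zqq$ requires input beyond the $\C\PP^\infty$ computation. You compute $d_3$ (and later $d_7$) only on the $v_2^2$-multiples of the generators via $d_3(v_2^2)=\vhat_1v_2^{-4}$, which tacitly assumes $d_3$ vanishes on $2^{q-1}\uhat$, $\uhat^2$, and $2^{q-1}\uhat^3$ themselves. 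For $\uhat^2$ this is inherited from $\C\PP^\infty$, where $\uhat c(\uhat)$ lifts to $\pontr$ and is a permanent cycle; but $2^{q-1}\uhat$ has no counterpart there, and neither degree considerations nor $d_3^2=0$ rule out, say, $d_3(2^{q-1}\uhat)=\vhat_1v_2^{-6}\cdot 2^{q-1}\uhat$, which lands in the correct degree on your $E_3$-page. Were that the differential, $E_4$ would be generated by $v_2^2\cdot 2^{q-1}\uhat$ (degree $-28$) rather than $2^{q-1}\uhat$ (degree $-16$), so the stated $E_4$ would be wrong, and the identification of $z$ with $2^{q-1}\uhat$ used in the rest of the paper would fail.

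The paper closes this with a naturality argument you are missing. The map $B\Zqq\to B\Zq$ carries $\uhat$ to $[2^{q-1}](\uhat)$, whose leading term is $2^{q-1}\uhat$, and since $ER(2)^*(B\Zq)=ER(2)^*[[\uhat]]/[2]_{\hat F}(\uhat)$ is already known, $\uhat\in E(2)^*(B\Zq)$ and all its powers are permanent cycles in the BSS for $B\Zq$. Hence $2^{q-1}\uhat$ is a permanent cycle in the BSS for $B\Zqq$, $2^{q-1}\uhat^{2k+1}=(2^{q-1}\uhat)\cdot\uhat^{2k}$ is as well, and both $d_3$ and $d_7$ are then forced to act through the coefficients exactly as you assert. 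With that single input supplied, your argument coincides with the paper's.
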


\begin{proof}
Since $[2^q](\uhat) = 2^q \uhat$ mod $\uhat^2$, if we start our BSS
for 
$ER(2)^*(B\Zqq)$
not with $E(2)^*(B\Zqq)$, but by filtering this by powers
of $\uhat$, we get an associated graded version of $E_1$ as above.

We have a surjective map $E(2)^*(\C \PP^\infty) \ra E(2)^*(B\Zqq)$.  In
both cases we start by filtering by powers of $\uhat$, so this gives
a map of associated graded rings.  
We use this filtration to compute $d_1$, and
our first differential is inherited from $\C \PP^\infty$ giving the
first part of the
$x$-torsion of Theorem \ref{zqq}.
Taking the homology with respect to this $d_{1,1}$ differential gives a very
different answer from that for $\C \PP^\infty$.  
We get:
\[
\Zq[\vhat_1,v_2^{\pm 2},\uhat^2] 
\{v_2 \uhat, 2^{q-1}\uhat, \uhat^2, 2^{q-1} v_2 \uhat^2 \}
\]
We now need to compute the second differential in our spectral sequence
for $d_1$, i.e. we need to take into consideration $\uhat^j$ 
and $\uhat^{j+1}$.
We need the solution to the
extension problem on our generators 
given by the $2^q$-series modulo $\uhat^3$ (from Lemma \ref{lemma}):
\[
2(2^{q-1} \uhat) = 2^{q-1} \vhat_1 \uhat^2
\quad
 \text{and} 
\quad
2(2^{q-1} \uhat^2) = 2^{q-1} \vhat_1 \uhat^3.
\]
We compute our $d_{1,2}$ on the generators with the continued
understanding that the differential commutes with $\vhat_1$, $v_2^2$,
and $\uhat^2$.
\[
d_{1,2}(v_2 \uhat) = v_2^{-3}(1+c)v_2 \uhat 
= v_2^{-3}(v_2 \uhat +  v_2 (\uhat + \vhat_1 \uhat^2))= 
v_2^{-2}\vhat_1 \uhat^2.
\]
\[
d_{1,2}(2^{q-1} \uhat) = v_2^{-3}(1+c)2^{q-1} \uhat = 2^{q-1} v_2^{-3}(\uhat + (\uhat+\vhat_1 \uhat^2))
\]
\[
= v_2^{-3}(2^q \uhat + 2^{q-1} \vhat_1 \uhat^2)
= v_2^{-3}(2^{q-1} \vhat_1 \uhat^2 + 2^{q-1} \vhat_1 \uhat^2)
=0.
\]
\[
d_{1,2}(\uhat^2) = 0.
\]
\[
d_{1,2}(2^{q-1} v_2 \uhat^2) = 2^{q-1} \uhat^2 d_1(v_2) = 2^q v_2^{-2} \uhat^2 =
2^{q-1} \vhat_1 v_2^{-2} \uhat^3.
\]

This gives us the last of the $x$-torsion elements in Theorem \ref{zqq}.
The homology after this $d_{1,2}$ is the $E_2$ stated in the theorem.
The degrees of all the elements we have left are divisible by 4
and our $d_1$ is of degree 2 mod 4, so we have finished with
our $d_1$.

The map $B\Zqq \ra B\Zq$ induces the map
$E(2)^*(B\Zq) \ra E(2)^*(B\Zqq)$ taking $\uhat$ to
$[2^{q-1}](\uhat) \in E(2)^*(B\Zqq)$.  
Consequently, this must
be a permanent cycle.  In our associated graded $E_3$, this means that 
$2^{q-1}\uhat$ has no differential.  
We have $\uhat^{2k}$ has no differential and so we get
that
$2^{q-1}\uhat^{2k+1}$ has no differential.  
Consequently, the differential $d_3$ in our filtration is
given by:
\[
d_3(2^{q-1} v_2^2 \uhat^{2k+1}) = 2^{q-1} \vhat_1 v_2^{-4}
\uhat^{2k+1}
\quad
\text{and}
\quad
d_3( v_2^2 \uhat^{2k}) =  \vhat_1 v_2^{-4}
\uhat^{2k}.
\]
In the first case, if $k=0$, the $\vhat_1$ is there.  
If $k > 0$ or we look at the second case, we use the
relation inherited from $\C \PP^\infty$ (from \ref{rel}):
$
\vhat_1 \uhat^{2} = \vhat_2 \uhat^{4}.
$
This gives
\[
2^{q-1} \vhat_1 v_2^{-4}
\uhat^{2k+3} =
2^{q-1} v_2^{-4} \vhat_2 \uhat^{2k+5}  \quad \text{and} \quad
\vhat_1 v_2^{-4} \uhat^{2k} = v_2^{-4}\vhat_2 \uhat^{2k+2}.
\]

We can now read off the $x^3$-torsion in the associated graded object 
from this for Theorem \ref{zqq}.
After our $d_3$, we are left with $E_4$ as stated in the theorem.
All elements are in degrees divisible by 8, and the differentials
$d_4$, $d_5$, and $d_6$ have degrees 5, 6, and 7, mod 8, so are
all zero.

Our 3 generators are known to be cycles in this filtration,
so the differential $d_7$ is determined by what happens on the
coefficients,
\[
d_7(v_2^4)= \vhat_2 v_2^{-8} = v_2^{-16} = \vhat_2^2.
\]
Our $x^7$-torsion is as in Theorem \ref{zqq} and $E_8 =0$.
\end{proof}

Observe that if $q=1$, we have $x^7$-torsion generators
$\uhat^{1-3}$ and as $q$ goes off to infinity, we are just left
with our $\uhat^2$ from $\C \PP^\infty$.


\begin{proof}[Proof of Theorem \ref{thmtwoo}]
We have already proven (1) and (2).  
Recall that $ER(2)^{8*}$ is just $ER(2)^*$ without $x$ or $\alpha_1$ and
$\alpha_3$, as in Fact \ref{ffact}.  These coefficients inject into
the coefficients $E(2)^{8*}$.
To show that 
$ER(2)^{8*}(B\Zqq) \rightarrow
E(2)^{8*}(B\Zqq)$
injects, all we need to do is read off the answer from Theorem \ref{zqq}.
All elements in the kernel must have an $x$.  The $x^1$-torsion never
has an $x$ non-zero on it.  All of the generators of $x^3$ and $x^7$-torsion
are in degree zero mod (8) and the degree of $x$ mod (8) is -1.  
The injection follows.

By the injection we know that $z^2$ must 
lie in the description given by Theorem \ref{thmtwoo}, (2).  
In principle,
we know all about
$E(2)^*(B\Zqq)$.
We know that $z$ goes to $[2^{q-1}](\uhat)$ and $\pontr$ 
goes to $\uhat c(\uhat)$.  We also know that $0 = [2^q](\uhat)$.
The formal group law gives the series for $c(\uhat)$ by
$0 = \hat{F}(\uhat, c(\uhat))$.  

We need an algorithm that allows us to compute $2z$, $z^2$, and
$2^q \pontr$ in terms of $z \pontr^i$ and $\pontr^i$.  
We need to be more specific.  
We want to write our elements in terms of a series using
sums of elements $a z \pontr^i$ and $b \pontr^i$ with
$a,b \in \Zp[\vhat_1,\vhat_2]$ and where 2 does not
divide $a$ and $2^q$ does not divide $b$.
From the injection, we know this is possible and our algorithm works
for all three cases.
We use
the same names, $z$ and $\pontr$, for the images in $E(2)^*(B \Zqq)$.

Let
\[
z = [2^{q-1}](\uhat) = f(\uhat) = \sum_{i \ge 0} f_i \uhat^{i+1}
\quad
\text{ with } \quad f_0 = 2^{q-1}
\]
\[
 [2^{q}](\uhat) = g(\uhat) = \sum_{i \ge 0} g_i \uhat^{i+1}
\quad
\text{ with } \quad g_0 = 2^{q}
\]
\[
c(\uhat) =  h(\uhat) = \sum_{i \ge 0} h_i \uhat^{i+1}
\quad
\text{ with } \quad h_0 = -1
\]
We have
\[
f_i, g_i, h_i \in \Zp[\vhat_1,\vhat_2] \subset E(2)^*
\]
and
\[
2^{q-1} \uhat = z - \sum_{i>0} f_i \uhat^{i+1}
\]
\[
2^{q} \uhat =  - \sum_{i>0} g_i \uhat^{i+1}
\]
\[
- \uhat =c(\uhat)  - \sum_{i>0} h_i \uhat^{i+1}
\]
\[
 \uhat^2 = -\uhat (-\uhat) = -\uhat (c(\uhat)  - \sum_{i>0} h_i \uhat^{i+1})
= - \pontr + \sum_{i > 0} h_i \uhat^{i+2}
\]

The important facts here are that, mod higher powers of $\uhat$, 
$z = 2^{q-1} \uhat$, $0 = 2^{q} \uhat$, and $\pontr = - \uhat^2$.

We can now present our algorithm  for 
computing $2z$, $z^2$, and $2^q \pontr$
using induction.  To start, the
only coefficient of $\uhat$ we can have from our elements of 
interest is $2^{q-1} \uhat$ from $z$ and the injection, 
which we can replace with the formula
above to get $z$ modulo higher powers of $\uhat$.

Let us assume that we have succeeded for powers of $\uhat$ below
$\uhat^j$.  If we have a term here, $a \uhat^j$ with $2^q$ dividing
$a$, then we rewrite as
\[
a \uhat^j = (a/2^q)\uhat^{j-1}( - \sum_{i > 0} g_i \uhat^{i+1})
 = 0 \mod \uhat^{j+1}.
\]
If $2^q$ does not divide $a$, we have two cases.
If $j$ is even, say $j=2k$, then 
\[
a \uhat^{2k} = a ( -\pontr + \sum_{i> 0} h_i \uhat^{i+2})^k 
= a( - \pontr)^k \mod \uhat^{j+1}.
\]
If $j$ is odd, say $j=2k+1$, we must use injectivity to see
that $2^{q-1}$ divides $a$, but not $2^q$ (as we have already
dealt with that). Now we can set
\begin{multline*}
a \uhat^{2k+1} = (a/2^{q-1}) (z - \sum_{i>0} f_i \uhat^{i+1})
( -\pontr + \sum_{i> 0} h_i \uhat^{i+2})^k
\\
= (a/2^{q-1}) z (-\pontr)^k \mod \uhat^{j+1}.
\end{multline*}
This concludes the inductive step.
\end{proof}

\begin{remark}
\label{alpharelations}
We can do more than this.
We can
clearly push the elements $2^{q-1}\alpha_i \pontr$ and $\alpha_i z$
into $E(2)^*(B \Zqq)$ and do the same thing as above to get relations.
For example, $\alpha_3 z $ reduces to a series  
with lead term $2 v_2^6 2^{q-1} \uhat$ and so we
have $2^q \uhat$, which lives in higher filtrations.  
We have not given nice names to elements that live in degree
$4 \mod (8)$, so they are not so easy to describe.
From Theorem \ref{zqq}, we can read off that all $x^i$-torsion
generators
in degrees $4*$ inject by the reduction to $E(2)^*(B \Zqq)$.
However, in the case of elements in degree $4*$, there are 3
elements (setting $\vhat_2 = 1$) in the kernel, namely $\{x^4 z, 
x^4 \pontr, x^4 z \pontr \}$.
At first glance it appears that we cannot solve for these relations
completely in $E(2)^*(B \Zqq)$ because of the kernel.  
However, we know we have injection
for degrees $8*$.  The only new relations not in degree $8*$ 
and not divisible by $x$ are
for $2^{q-1} \alpha_{\{1,3\}}\pontr$ and $ \alpha_{\{1,3\}} z$.
The degrees of these four elements, mod (48), are 4, 28, 20, and 44.
The degrees of the 3 elements above that are divisible by $x$
are  12, 44, and 28, so we could possibly have a problem here.
For example, write the above $\alpha_3 z =  y + a x^4 \pontr$
where no term in the series $y$ is divisible by $x$.  
We would like to show
that $a$ must be zero.  Since $\alpha_3$ is $x^1$-torsion, and
$x^4 \pontr$ is $x^3$-torsion, if $a \ne 0$, then $y$ must
be an $x^3$-torsion generator.  However, a quick look at Theorem
\ref{zqq} shows that there are no $x^3$-torsion generators in
degree equal to 4 mod (8).
Only one other element could have a similar problem but it is solved
in the same way.  The bottom line is that all the relations
that involve the $\alpha_i$ can be solved in $E(2)^*(B \Zqq)$
without any involvement of elements divisible by $x$.
\end{remark}

\section{The Atiyah-Hirzebruch spectral sequence for $ER(2)^*(\C \PP^\infty)$}
\label{AHSS}

There are only 3 differentials in the AHSS for $ER(2)^*(\C \PP^\infty)$, 
$d_2$, $d_4$, and
$d_6$.
To simplify our computations here,
we grade over $\Z/(48)$ by setting $\vhat_2 = v_2^{-8} = 1$,
without any loss of information.

Our goal is to use our results from the BSS to compute the AHSS and
then to identify all our terms from the BSS in the AHSS.  This
immediately gives the AHSS for $ER(2)^*(\C \PP^n)$, and then we can
use it to go back and compute the BSS for $\C \PP^n$.
This is a novel circle of arguments and not necessarily the most
efficient approach to $ER(2)^*(\C \PP^n)$, but the AHSS result is of
some interest in its own right.
In particular, it is from the AHSS that we get the results on the 
extra powers  of $\pontr$
for Theorem \ref{thmone}.

In general, when we talk about the AHSS, we will use $\alpha$ for
$\vhat_1$ and $u$ instead of $\uhat = v_2^3 u$, but stick with
$\vhat_1$ and $\uhat$ with the BSS.

The degree of $u$ is 2, and it is the natural choice
for the AHSS,  since 
\[
H^*(\C \PP^\infty;\Zp)  = \Zp [ u ] \text{ and }
H^*(\C \PP^\infty;\Zq) = \Zq [ u ]. 
\]

We describe the AHSS in a sequence of theorems, keeping in mind
that we have set $\vhat_2 = 1$.
To help keep track of degrees, it might be helpful to the reader
to remember the table for $ER(2)^*$ in the appendix, Section
\ref{appendix}.

\begin{thm}
The $E_2$ term of the (reduced)
AHSS for 
$ER(2)^*(\C \PP^\infty)$ is:
\[
\Zp [ \alpha,  u ] \{u,wu, \alpha_1u, \alpha_2u, \alpha_3u \}
\quad
\text{with}
\quad
2w u = \alpha \alpha_2 u.
\]
\[
\Zq [ \alpha,  u ] \{x^{1-2}u, x^{1-2}wu  \}
\qquad
\Zq [  u ] \{x^{3-6} u  \}
\]
The differential, $d_2$, is determined by the multiplicative
structure and
$
d_2(u)= x\alpha u^2.
$

$E_3 = E_4$ is
\[
\Zp [ \alpha,u^2 ] \{ \alpha_i u  \}
\qquad
\Z/(2) [ \alpha,  u^2 ] \{ x^2 \alpha u, x^2 w u  \}
\qquad
0 \le i < 4
\]
\[
\Zp[\alpha,u^2]  \{   u^2 , w u^2, \alpha_1 u^2, \alpha_2 u^2, \alpha_3 u^2 \}  
\quad
\text{with}
\quad
2w u^2 = \alpha \alpha_2 u^2
\]
\[
\Z/(2) [  u^2]
\{  x^{1-6} u^2, x^{1-2}w u^2, x^{2-6} u  \}.
\]
\end{thm}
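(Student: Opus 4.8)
The plan is to establish the three assertions in order: the $E_2$-page, the differential $d_2$, and the passage to $E_3 = E_4$. The $E_2$-page is immediate. Since $H^*(\C \PP^\infty;\Z)$ is free and concentrated in even degrees, the universal coefficient theorem collapses to $E_2 = H^*(\C \PP^\infty;\Z)\otimes_{\Z} ER(2)^* = ER(2)^*[u]$ as a module over $ER(2)^*$, the torsion-free summands giving $\Zp[u]$-modules and the $\Zq$-torsion summands giving $\Zq[u]$-modules; the generators and the single relation $2wu = \alpha\alpha_2 u$ come directly from Fact \ref{ffact} (writing $\alpha$ for $\vhat_1$). Passing to the reduced theory discards the filtration-zero summand, so every term carries a factor of $u$, and setting $\vhat_2 = 1$ produces exactly the $\Z/(48)$-graded list in the statement.

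For the differential, the coefficients $ER(2)^*$ are permanent cycles and $d_2$ is a derivation, so $d_2$ is determined by $d_2(u)$. A degree count places $d_2(u)$ in $E_2^{4,-1} = H^4(\C \PP^\infty; ER(2)^{-1})$, and with $\vhat_2 = 1$ one checks $ER(2)^{-1} = \Zq\{x\alpha\}$, so $E_2^{4,-1} = \Zq\{x\alpha u^2\}$ and hence $d_2(u)\in\{0,\ x\alpha u^2\}$. The real content is that $d_2(u) = x\alpha u^2$, and here I would follow the strategy announced for the section and read the AHSS off the already-known $ER(2)^*(\C \PP^\infty)$ of Section \ref{bss1}. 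First, $u$ is not a permanent cycle: the Atiyah--Hirzebruch spectral sequence for the complex-orientable theory $E(2)$ collapses, so any permanent cycle representing $u$ in the $ER(2)$-spectral sequence would, by naturality of $ER(2)\to E(2)$, yield a class in $ER(2)^2(\C \PP^\infty)$ whose reduction is $\equiv u$ modulo $u^2$; but the image of $ER(2)^*(\C \PP^\infty)\to E(2)^*(\C \PP^\infty)$ is the set of permanent cycles of the BSS of (\ref{exactcouple}), and since $d_1(\uhat)$ has leading term $2v_2^{-3}\uhat\neq 0$ no permanent cycle is congruent to $u$ modulo $u^2$. Thus $u$ dies. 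Because $\C \PP^\infty$ has cohomology only in even degrees every odd differential vanishes, and $ER(2)^{-5}=0$ kills $d_6(u)$, so the only candidates are $d_2(u) = x\alpha u^2$ and $d_4(u) = x^3u^3$. To decide between them I would match the filtration in which the corresponding class of $\mathrm{gr}\,ER(2)^3(\C \PP^\infty)$ is destroyed against Theorem \ref{cp}: the filtration-four class $x\alpha u^2$ must die (it represents the $x$-multiple $x\alpha\pontr$ of the $x^1$-torsion generator $\vhat_1\pontr$, which is zero), whereas the filtration-six class survives, and this forces the differential onto $d_2$. I expect this step, distinguishing $d_2$ from $d_4$ by convergence to the known answer, to be the main obstacle; everything upstream is formal.

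Finally, $E_3 = H(E_2,d_2)$ is a routine computation. The Leibniz rule gives $d_2(cu^k) = k\,(c\,x\alpha)\,u^{k+1}$ for a coefficient $c$, so every even power of $u$ is a cycle (because $2x\alpha = 0$) and the calculation reduces to evaluating the products $x\alpha\cdot c$ in $ER(2)^*$ via Fact \ref{ffact} and the appendix. The relevant vanishings are $x\alpha\alpha_i = 0$ for $0\le i<4$ (for $i=2$ because $\alpha\alpha_2 = 2w$ and $2x=0$, the others by degree) together with $x^3\alpha = x^3 w = 0$; these make $2u,\ \alpha_1u,\ \alpha_2u,\ \alpha_3u$ as well as $x^{2-6}u,\ x^2\alpha u,\ x^2wu$ into surviving cycles, while $d_2(\alpha^{b}u) = x\alpha^{b+1}u^2$ and its $w$-analogue kill precisely the $\alpha$-multiples of $x^{1-2}u^2$ and $x^{1-2}wu^2$, leaving the $\Zq[u^2]$-towers recorded in the statement. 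That $E_3 = E_4$ then needs no further work: $E_3$ is supported in even cohomological filtration while $d_3$ raises filtration by the odd number $3$, so $d_3 = 0$.
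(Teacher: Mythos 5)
Your $E_2$ identification, the computation of $E_3$ as kernel-on-odd-lines/cokernel-on-even-lines using $2x=0$, $x\alpha_i=0$, $x^3\alpha=0$, and the degree argument for $E_3=E_4$ all agree with the paper. The one substantive input, $d_2(u)=x\alpha u^2$, is where you diverge. The paper restricts to $\C\PP^2$: there $u$ must die (by naturality of the BSS $d_1$), and since $H^*(\C\PP^2)$ vanishes above degree $4$ a $d_4$ has no possible target, so $d_2(u)=x\alpha u^2$ is forced in $\C\PP^2$ and pulled back to $\C\PP^\infty$ by naturality. You instead work in $\C\PP^\infty$ and propose to eliminate the alternative $d_4(u)=x^3u^3$ by comparison with the known answer. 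That route can be made to work, but two of your specific assertions are wrong as written. First, $ER(2)^{-1}$ is not $\Zq\{x\alpha\}$ but $\Zq\{x\alpha^{3k+1}\}$ (see the appendix, degree $47$), so a priori $d_2(u)$ could be $x\alpha^{3k+1}u^2$ for $k>0$; the paper notes this and dismisses it against the BSS, and you need to as well. Second, your tie-breaker is misstated: $x\alpha u^2$ does not represent $x\alpha\pontr$ (in the paper's identifications, Theorem \ref{xtorid}, the class $\vhat_1\pontr$ sits in filtration $2$ as $x^2\alpha u$, not in filtration $4$), and the "filtration-six class" $x^3u^3$ does \emph{not} survive -- it supports $d_4(x^3u^3)=x^6u^5$. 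Indeed one checks from Theorem \ref{cp} that $\widetilde{ER(2)}{}^3(\C\PP^\infty)=0$, so \emph{nothing} in total degree $3$ survives.

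The repaired version of your argument is: if $d_2(u)=0$, then multiplicativity together with $2x=0$ forces $d_2(u^2)=0$, $d_4(x\alpha u^2)=x\alpha\, d_4(u^2)=0$, and $d_6(x\alpha u^2)=0$ (as $ER(2)^{-5}=0$), so $x\alpha u^2$ is a permanent cycle that nothing can hit; it would then contribute a nonzero class to $\widetilde{ER(2)}{}^3(\C\PP^\infty)$, contradicting Theorem \ref{cp}. The same count rules out $d_2(u)=x\alpha^{3k+1}u^2$ with $k>0$, since $x\alpha u^2$ would still escape the image of $d_2$. With those corrections your approach is sound; it trades the paper's clean "no room for $d_4$ in $\C\PP^2$" argument for a convergence count in $\C\PP^\infty$, which costs a little more bookkeeping but avoids invoking the finite skeleton. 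Your argument that $u$ dies also needs a small adjustment: the relevant leading term for $u=v_2^{-3}\uhat$ is governed by $d_{1,2}(v_2\uhat)=v_2^{-2}\vhat_1\uhat^2$ (since $d_{1,1}(v_2\uhat)=0$), not by $d_{1,1}(\uhat)=2v_2^{-3}\uhat$; the conclusion that the entire $\uhat^1$-line dies under $d_1$, hence no class congruent to $u$ modulo $u^2$ is in the image of $ER(2)^*\to E(2)^*$, is what the paper's BSS computation actually provides.
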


The reader is spared the complete description of $E_5= E_6$.
We can do this because we know
$ER(2)^*(\C \PP^\infty)$ already.
We just identify 
$ER(2)^*(\C \PP^\infty)$ 
in the AHSS and we can ignore those elements because they cannot
be either source or target for differentials.
We will not identify all elements just yet though.

\begin{thm} 
\label{xtorid}
In $E_4$ of
the AHSS for $ER(2)^*(\C \PP^\infty)$
we identify 
the $x^1$-torsion generators from the BSS
on the left with the AHSS elements on the right.
\[
\Zp[\vhat_1,v_2^{\pm 2},\uhat^2] \{2 v_2 \uhat, 2  \uhat^2\}
=
\Zp [ \alpha,u^2 ] \{ \alpha_i u, \alpha_i u^2  \}
\qquad
0 \le i < 4
\]
\[
\Zq[\vhat_1,v_2^{\pm 4},\uhat^4] \{ \vhat_1 v_2^2 \uhat^2,
\vhat_1 \uhat^4 \}
=
\Zq[\alpha,u^2]\{\alpha u^2, w u^2 \}
\]
\[
\Zq[\vhat_1,v_2^{\pm 4},\uhat^4] \{ \vhat_1  \uhat^2,
\vhat_1 v_2^2 \uhat^4 \}
=
\Zq[\alpha,u^2]\{x^2 \alpha u, x^2 w u \}
\]
\end{thm}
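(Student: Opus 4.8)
The plan is to treat Theorem \ref{cp} and the preceding computation of the AHSS $E_4$ as two descriptions of the same associated graded of $ER(2)^*(\C \PP^\infty)$, and to pass from the first to the second by a change of variables. The dictionary is $\uhat = v_2^3 u$, $\alpha = \vhat_1$, and $\vhat_2 = v_2^{-8} = 1$, together with the identifications $\alpha_i = 2 v_2^{2i}$ and $w = \vhat_1 v_2^4$ coming from Fact \ref{ffact}. Since the AHSS is graded over $\Z/(48)$ with $\vhat_2 = 1$, I would first impose $v_2^8 = 1$ on the right-hand modules of Theorem \ref{cp} and then rewrite each BSS generator in the variables $\alpha, u, x, \alpha_i, w$, so that the claimed equalities become isomorphisms of graded $ER(2)^*$-modules inside $E_4$. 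The relation (\ref{rel}), which with $\vhat_2 = 1$ reads $\vhat_1 \uhat^2 = \uhat^4$, would be used throughout to move a class between representatives living in different cellular filtrations.

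For the $\Zp$-part the translation is immediate. The coefficient $v_2^{\pm 2}$ acting freely on $2 v_2 \uhat = 2 v_2^4 u$ and on $2 \uhat^2 = 2 v_2^6 u^2$ produces, once $v_2^8 = 1$ is imposed, exactly the elements $2 v_2^{2i} u = \alpha_i u$ and $2 v_2^{2i} u^2 = \alpha_i u^2$ for $0 \le i < 4$, because the four residues of $v_2^{2i}$ modulo $v_2^8 = 1$ are precisely the four $\alpha_i$. This identifies $\Zp[\vhat_1, v_2^{\pm 2}, \uhat^2]\{2 v_2\uhat, 2\uhat^2\}$ with $\Zp[\alpha, u^2]\{\alpha_i u, \alpha_i u^2\}$ and disposes of the first displayed equality.

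For the $\Zq$-part I would rewrite the single BSS generator $\vhat_1 \uhat^2$ over the subring $\Zq[\vhat_1, v_2^{\pm 4}, \uhat^4]$, which splits it into the four generators $\vhat_1 \uhat^2$, $\vhat_1 v_2^2 \uhat^2$, $\vhat_1 \uhat^4$, $\vhat_1 v_2^2 \uhat^4$. Imposing $v_2^8 = 1$ gives $\vhat_1 v_2^2 \uhat^2 = \alpha u^2$ directly, and using $w = \vhat_1 v_2^4$ together with (\ref{rel}) matches the companion generator to $w u^2$; these two live in the filtration of the $u^2$-cell and account for the second equality. The remaining two generators must be located one cell lower, and this is where the factor $x^2$ enters: because $x^3 \alpha = x^3 w = 0$ in $ER(2)^*$ (Fact \ref{ffact}), the AHSS differential $d_2(u) = x \alpha u^2$ kills $x \cdot (x^2 \alpha u)$ and $x\cdot(x^2 w u)$, so $x^2 \alpha u$ and $x^2 w u$ are $d_2$-cycles surviving to $E_4$; moreover they are genuinely $x^1$-torsion since $x\cdot x^2 \alpha u = x^3 \alpha u = 0$ and similarly for $w$. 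Matching these to $\vhat_1 \uhat^2$ and $\vhat_1 v_2^2 \uhat^4$ yields the third equality.

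The main obstacle is precisely this last step: explaining why half of the $\Zq$-part $x^1$-torsion drops onto the bottom cell and acquires an explicit factor of $x^2$, while the other half stays on the $u^2$-cell with no $x$. I expect the real work to be tracking (\ref{rel}) against $d_2(u) = x\alpha u^2$ carefully enough to verify that the naive $u^2$-representative of these two classes is carried into higher filtration, forcing the surviving representative onto the $u$-cell, and to confirm that $x^2 \alpha u$ and $x^2 w u$ are nonzero in $E_4$ and not themselves boundaries. Once that filtration-drop is pinned down, the $\Zp$-part and the first two $\Zq$-generators reduce to routine module- and degree-bookkeeping under the dictionary above.
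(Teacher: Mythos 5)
Your overall strategy --- rewrite everything via $\uhat = v_2^3 u$, $\alpha_i = 2v_2^{2i}$, $w = \vhat_1 v_2^4$, $\vhat_2 = v_2^{-8} = 1$, handle the first two lines as module bookkeeping, and locate the last two generators one cell lower with an explicit $x^2$ --- is exactly the paper's. The first line is fine. For the second line you do not need the relation (\ref{rel}) at all: $\vhat_1\uhat^4$ becomes $wu^4$ rather than $wu^2$, but $wu^2 = v_2^4\cdot(\vhat_1 v_2^2\uhat^2)$ already lies in the module generated by the first generator, so the two modules coincide even though the listed generators do not match up one-to-one; that is all that is needed there, and invoking (\ref{rel}) to ``match the companion generator to $wu^2$'' is a red herring.

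The genuine gap is the third line, and you flag it yourself: you verify that $x^2\alpha u$ and $x^2 wu$ are $d_2$-cycles surviving to $E_4$ and are killed by $x$, but you never prove that they are the representatives of the BSS classes $\vhat_1\uhat^2$ and $\vhat_1 v_2^2\uhat^4$; you only say you ``expect the real work'' to consist of tracking (\ref{rel}) against $d_2$. That is not how the identification goes, and the mechanism you describe (the naive $u^2$-representative being ``carried into higher filtration, forcing the surviving representative onto the $u$-cell'') is backwards. The actual argument is a forced-choice count: under the dictionary these BSS classes map to $\alpha\cdot\tfrac12\alpha_3\cdot u^{2}$ and $\alpha\cdot\tfrac12\alpha_1\cdot u^{2}$ (up to units and powers of $u^2$), and since $\tfrac12\alpha_1$ and $\tfrac12\alpha_3$ do not exist in $ER(2)^*$, no element of the $u^2$-column of $E_4$ can carry them; being nonzero, they must be detected in the only lower reduced cellular filtration, the $u$-column, and a degree check shows the only classes available there in the correct total degrees are $x^2\alpha u$ and $x^2 wu$ together with their $\alpha^j u^{2k}$-multiples. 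Without that existence-plus-uniqueness count your proposal establishes that $x^2\alpha u$ and $x^2 wu$ are plausible candidates, not that they effect the identification the theorem asserts.
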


\begin{thm}
After removing the $x^1$-torsion from $E_4$ for the AHSS for
$ER(2)^*(\C \PP^\infty)$, what remains is:
\[
\Zq[u^2]  \{  x^{0-6} u^2  \}
\qquad
\Z/(2) [  u^2]
\{  x^{1-2}w u^2, x^{2-6} u  \}.
\]
The differential, $d_4$, is determined by the multiplicative
structure and 
$d_4(u^2) = x^3 u^4$.

$E_5 = E_6$ is, after removing all of the $x^1$-torsion:
\[
\Z/(2) [  u^2]
\{  x^{1-2}w u^2   \}
\qquad
\Z/(2) 
\{   x^{2-6} u  \}
\qquad
\Zq[u^4]  \{  x^{4-6} u^2  \}
\]
\[
\Zq[u^4]  \{  x^{0-2} u^4  \}
\qquad
\Z/(2) [  u^4 ] \{   x^{4-6} u^3  \}
\qquad
\Z/(2) [  u^4 ] \{   x^{2-4} u^5  \}.
\]
\end{thm}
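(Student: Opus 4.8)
The plan is to carry out three tasks: first, subtract the $x^1$-torsion identified in Theorem \ref{xtorid} from the $E_4$ computed above by direct bookkeeping; second, pin down the differential $d_4$ using the already-computed BSS for $\C \PP^\infty$ (Theorem \ref{cp}) together with $ER(2)^*$-linearity; and third, take homology with respect to $d_4$ to read off $E_5 = E_6$, noting that $d_5 = 0$ for parity reasons.

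For the first task I would match the $E_4$ generators against the $x^1$-torsion list of Theorem \ref{xtorid}. The pieces $\Zp[\alpha, u^2]\{\alpha_i u\}$ and $\Zq[\alpha, u^2]\{x^2 \alpha u, x^2 wu\}$ are entirely $x^1$-torsion and disappear. The only delicate spot is the summand $\Zp[\alpha, u^2]\{u^2, wu^2, \alpha_1 u^2, \alpha_2 u^2, \alpha_3 u^2\}$ carrying the relation $2wu^2 = \alpha\alpha_2 u^2$: the removed torsion includes $\alpha_0 u^2 = 2u^2$ and $\alpha u^2$ alongside $wu^2, \alpha_1 u^2, \alpha_2 u^2, \alpha_3 u^2$, so the free generator $u^2$ survives only after $\Zp[\alpha, u^2]\{u^2\}$ is quotiented by the ideal $(2, \alpha)$, i.e. as $\Zq[u^2]\{u^2\}$. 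Splicing this $x^0 u^2$ onto the surviving $x^{1-6} u^2$ of the last $E_4$ summand yields the tower $\Zq[u^2]\{x^{0-6} u^2\}$, while $\Zq[u^2]\{x^{1-2} wu^2, x^{2-6} u\}$ survives untouched.

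For the differential, $d_4(u^2) = x^3 u^4$ is forced: the BSS exhibits $\uhat^4$ (equivalently $u^4$) as the unique $x^3$-torsion generator for $\C \PP^\infty$, and since $d_3 = 0$ only $d_4$ can manufacture it; the source in filtration four is $u^2$ and the total degree pins the target to $x^3 u^4$. Extending $ER(2)^*$-linearly, $d_4(wu^2) = x^3 w u^4 = 0$ because $x^3 w = 0$ in $ER(2)^*$ (Fact \ref{ffact}), so the entire $wu^2$-tower consists of permanent cycles. The delicate point, and the step I expect to be the main obstacle, is $d_4(x^2 u) = 0$: in characteristic two $u$ itself has already died at $E_2$, so the value of $d_4$ on the independent odd generator $x^2 u$ is not forced by Leibniz, and its a priori candidate target $x^5 u^3$ is nonzero in $E_4$. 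I would rule it out by convergence — the BSS produces no $x^5$-torsion for $\C \PP^\infty$, so a nonzero $d_4(x^2 u)$ would leave an $E_\infty$ incompatible with the known $ER(2)^*(\C \PP^\infty)$.

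With $d_4$ fixed I would compute the homology tower by tower, tracking the finite $x$-lengths ($x^7 = 0$ on the $u^2$-tower, $x^3 w = 0$, and $\vhat_1 x^3 = 0$). On $\Zq[u^2]\{x^{0-6} u^2\}$ one has $d_4(x^i u^{4k+2}) = x^{i+3} u^{4k+4}$ and $d_4(x^i u^{4k}) = 0$, with homology $\Zq[u^4]\{x^{0-2} u^4\} \oplus \Zq[u^4]\{x^{4-6} u^2\}$; the $wu^2$-tower survives whole; and on $\Zq[u^2]\{x^{2-6} u\}$ the differential vanishes on the $u^1$ level but satisfies $d_4(x^i u^{4k+3}) = x^{i+3} u^{4k+5}$, giving homology $\Zq\{x^{2-6} u\} \oplus \Zq[u^4]\{x^{2-4} u^5\} \oplus \Zq[u^4]\{x^{4-6} u^3\}$. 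Assembling these contributions reproduces exactly the stated $E_5 = E_6$, and $d_5 = 0$ since every surviving class lies in even filtration.
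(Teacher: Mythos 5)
Your proposal is correct and follows the same route as the paper: strip out the $x^1$-torsion of Theorem \ref{xtorid} (your bookkeeping of the $u^2$-generator surviving as $\Zq[u^2]\{u^2\}$ after quotienting by $(2,\alpha)$ is exactly right), force $d_4(u^2)=x^3u^4$ from the $x^3$-torsion class $\uhat^4$ in the BSS, show $x^2u$ is a permanent cycle, and take homology. Two points need tightening. First, when you assert that ``only $d_4$ can manufacture'' $x^3u^4$, you must also exclude $d_6$; the paper does this by observing that a $d_6$ hitting $x^3u^4$ would require a source $z_2u$ with $\deg z_2 = 2$, and no such element exists. Second, your justification of $d_4(x^2u)=0$ via ``the BSS produces no $x^5$-torsion'' is a non-sequitur as stated --- AHSS differentials do not translate directly into BSS torsion orders. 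The convergence idea is sound, but the clean version is the paper's: $\pontr$, mapping to $\uhat c(\uhat)$, is a known nonzero class in $ER(2)^{-32}(\C\PP^\infty)$ whose naive representative $-\tfrac{1}{2}\alpha_3u^2$ does not exist in filtration $4$, so $\pontr$ must be represented by $x^2u$ in filtration $2$, forcing $x^2u$ to be a permanent cycle. With those repairs, your tower-by-tower homology computation --- including the two families $d_4(x^{0-3}u^{4k+2})=x^{3-6}u^{4k+4}$ and $d_4(x^{2-3}u^{4k+3})=x^{5-6}u^{4k+5}$ obtained from multiplicativity with $x^2u$, and the vanishing $d_4(wu^2)=x^3wu^4=0$ --- reproduces the stated $E_5=E_6$ exactly.
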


We can now describe, and eliminate, the $x^3$-torsion elements
from $E_6$ before we compute $d_6$.

\begin{thm} 
\label{x3torid}
In 
$E_6$ of
the AHSS for $ER(2)^*(\C \PP^\infty)$
we identify the elements 
involved with $x^3$-torsion from the BSS
on the left with the AHSS elements on the right.
\[
\Zq[\uhat^8]\{x^{0-2} \uhat^8, x^{0-2} v_2^4 \uhat^4 \} 
= \Zq[u^4]\{x^{0-2} u^4 \}
\]
\[
\Zq[\uhat^8]\{x^{0-2} \uhat^{10}, x^{0-2} v_2^4 \uhat^6 \} 
= \Zq[u^4]\{x^{2-4} u^5 \}
\]
\[
\Zq[\uhat^8]\{x^{0-2} \uhat^{4}, x^{0-2} v_2^4 \uhat^8 \} 
= \Zq[u^4]\{x^{4-6} u^2 \}
\]
\[
\Zq[\uhat^8]\{ \uhat^{6},  v_2^4 \uhat^{10} \} 
= \Zq[u^4]\{x^{6} u^3 \}
\]
\[
\Zq[\uhat^8]\{x^{1-2} \uhat^{6}, x^{1-2} v_2^4 \uhat^{10} \} 
= \Zq[u^4]\{x^{1-2}w u^4 \}
\]
\end{thm}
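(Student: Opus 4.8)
I would read Theorem~\ref{x3torid} as a comparison of the two spectral sequences, both of which abut to $ER(2)^*(\C \PP^\infty)$ and whose relevant pages are already in hand. The right-hand sides are exactly the pieces of the AHSS $E_6$-term isolated just above, and the left-hand sides are reassembled from the BSS $x^3$-torsion generators, which Theorem~\ref{cp} gives as $\Zq[v_2^{\pm 4},\uhat^2]\{\uhat^4\}$. So the content is that these two families detect the same subquotient of $ER(2)^*(\C \PP^\infty)$, and the proof is a term-by-term matching organized by internal degree (taken mod $48$, since we have set $\vhat_2=1$) and by order of $x$-torsion.

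\textbf{The dictionary.} First I would fix the translation between the two variable systems. The AHSS is written over $ER(2)^*$ in $u$, $\alpha=\vhat_1$, $w$, and $x$, while the BSS is written in $\uhat=v_2^3u$ with $v_2$ invertible. The conversions I need are $\uhat=v_2^3u$, the periodicity $v_2^8=\vhat_2^{-1}=1$, the identity $w=v_1v_2=\vhat_1v_2^4$ from Section~\ref{preliminaries}, and the relation (\ref{rel}), which mod $2$ and with $\vhat_2=1$ reads $\vhat_1\uhat^2=\uhat^4$. Using only $v_2^8=1$ one gets the clean identifications $v_2^4\uhat^4=v_2^{16}u^4=u^4$ and $\uhat^8=v_2^{24}u^8=u^8$, which settle the first line once one notes that $\Zq[\uhat^8]=\Zq[u^8]$ on the two generators $u^4,u^8$ reassembles into $\Zq[u^4]$ on the single generator $u^4$. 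The $x$-exponents carry over unchanged there because $u^4$ is already a permanent cycle carrying honest $x^3$-torsion in both pictures.

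\textbf{The hard part.} The genuine difficulty is that the BSS filtration (by powers of $\uhat$) and the AHSS filtration (by powers of $u$) are different filtrations, so the $x$-decorations do not transport naively: a BSS class is detected by its \emph{lowest}-filtration nonzero AHSS term, and multiplying by $x$ can jump filtration whenever that leading term is itself $x$-torsion. The cleanest instance is $\uhat^6$: its leading term is $x^6u^3$ (filtration $6$), but $x\cdot x^6u^3=x^7u^3=0$ by $x^7=0$, so $x\uhat^6$ must be detected one filtration higher, at $xwu^4$ — which is exactly why the same generators $\{\uhat^6,v_2^4\uhat^{10}\}$ appear undecorated in the fourth line against $x^6u^3$ but with $x^{1-2}$ in the fifth line against $x^{1-2}wu^4$. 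Checking that $\uhat^6$ really is $x^3$-torsion in $ER(2)^*(\C\PP^\infty)$ then uses the $w$-correction together with the coefficient relations $x^3w=0$ and $x^2w\neq 0$ from Fact~\ref{ffact}. The shift sizes in the remaining lines are forced the same way, by the differentials $d_2(u)=x\alpha u^2$ and $d_4(u^2)=x^3u^4$ against $\vhat_1x^3=0$ and $x^7=0$: for example $x^ju^2$ is a $d_4$-cycle iff $x^{j+3}u^4=0$, i.e.\ $j\ge4$, giving the $x^{4-6}$ range of the third line, and $x^ju^5$ becomes a $d_2$-cycle once $\vhat_1x^{j+1}u^6=0$, i.e.\ $j\ge2$, giving the $x^{2-4}$ range of the second line.

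\textbf{Verification and obstacle.} With the leading terms and shift thresholds pinned down, each of the five lines is then a bookkeeping check: I would list the internal degrees mod $48$ in every $x$-column on both sides, confirm they agree as graded $\Zq$-vector spaces, and confirm that the base ring $\Zq[\uhat^8]=\Zq[u^8]$ on two generators differing by the factor $u^4$ reassembles into $\Zq[u^4]$ on one generator. The main obstacle throughout is precisely this reconciliation of the two filtrations — correctly extracting the lowest-filtration AHSS term for each left-hand generator and its $x$-tower in the presence of filtration jumps — and the reason the bare equality signs in the statement are legitimate is that the degree-mod-$48$ datum together with the order of $x$-torsion singles out a unique AHSS column for each generator.
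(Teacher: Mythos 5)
Your proposal is correct and follows essentially the same route as the paper: translate the BSS generators via $\uhat=v_2^3u$ and $v_2^8=1$, then pin each one to the unique surviving $E_6$ element of the right degree mod $48$, handling the $x$-multiples of $\uhat^6$ by the filtration jump forced by $x^7=0$ onto $x^{1-2}wu^4$. The paper carries out the same degree-counting identifications case by case (e.g.\ $v_2^6u^6=\tfrac12\alpha_3u^6$ forcing $x^2u^5$) and settles the fifth line by the same "no available differential" argument you give.
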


\begin{thm}
After removing the $x$ and  $x^3$-torsion from $E_6$ for the AHSS for
$ER(2)^*(\C \PP^\infty)$, what remains is:
\[
\Zq[u^4]\{x^{1-2} w u^2 \}
\qquad
\Zq \{x^{2-6} u\}
\qquad
\Zq[u^4]\{x^{4-5} u^3 \}
\]

The differential, $d_6$, is determined by the multiplicative
structure and 
$d_6(x^4 u^3) = x w u^6.$

$E_7 = E_\infty$ is, after removing all of the $x^1$ and $x^3$ torsion:
\[
\Zq \{x^{2-6} u\}
\qquad
\Zq \{x^{1-2}w u^2\}
\]
\end{thm}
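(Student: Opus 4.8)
The plan is to pin down $d_6$ on the reduced $E_6$ and then pass to homology. Part (1) is essentially bookkeeping: the reduced $E_6$ is obtained from the $E_5 = E_6$ of the preceding theorem by deleting the $x^3$-torsion classes catalogued in Theorem \ref{x3torid}. This deletion is legitimate because those classes are permanent cycles representing honest elements of $ER(2)^*(\C\PP^\infty)$, and a permanent cycle can be neither a boundary nor a source for a later differential; hence $d_6$ restricts to a differential on the reduced $E_6$, and the deleted summands may be ignored.

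For part (2) I would use that the AHSS is multiplicative, so $d_6$ is an $ER(2)^*$-linear derivation (the coefficient classes $x$, $w$, $\alpha$ lie in filtration zero and are permanent cycles) that moreover commutes with multiplication by $u^4$, since $u^4$ is itself a permanent cycle (it is the AHSS reduction of a genuine class, namely a power of $\pontr$ up to $v_2$-powers). Thus $d_6$ is determined by its value on $x^4u^3$. First I would show this value is nonzero by comparison with the BSS: $x^4u^3$ is not among the surviving $x^i$-torsion generators for $\C\PP^\infty$, so it cannot persist to $E_\infty$, and having already survived $d_2$ and $d_4$ it must support $d_6$. A degree count (mod $48$, with $\vhat_2 = 1$) then forces the target uniquely: $x^4u^3$ sits in total degree $\equiv 34$, its $d_6$-image must lie in filtration $+6$ and total degree $\equiv 35$, hence be a $u^6$-class, and among the candidates $x^jwu^6$ and $x^ju^6$ the resulting congruence together with $x^7 = 0$ leaves only $j=1$, giving $d_6(x^4u^3) = xwu^6$. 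Multiplicativity then yields $d_6(x^5u^3) = x\,d_6(x^4u^3) = x^2wu^6$, and commuting with $u^4$ gives $d_6(x^4u^{3+4j}) = xwu^{6+4j}$ and $d_6(x^5u^{3+4j}) = x^2wu^{6+4j}$ for all $j \ge 0$. The same degree count shows the remaining generators $x^{2-6}u$ and $x^{1-2}wu^2$ have no available target in the reduced $E_6$ (there are no $u^4$- or $u^5$-classes there), so they are $d_6$-cycles.

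For part (3) I would read off the homology. The maps $x^4u^{3+4j} \mapsto xwu^{6+4j}$ and $x^5u^{3+4j} \mapsto x^2wu^{6+4j}$ are injective over $\Zq$ with image exactly the positive-$u^4$-degree part of $\Zq[u^4]\{xwu^2, x^2wu^2\}$; hence every class of $\Zq[u^4]\{x^{4-5}u^3\}$ dies as a source, and all of $\Zq[u^4]\{x^{1-2}wu^2\}$ dies as a boundary except the two bottom classes $xwu^2$ and $x^2wu^2$. Nothing maps into the $\Zq\{x^{2-6}u\}$ summand, so it survives untouched. Thus $E_7 = \Zq\{x^{2-6}u\} \oplus \Zq\{x^{1-2}wu^2\}$, and since $d_2$, $d_4$, $d_6$ are the only nonzero differentials in this AHSS, we conclude $E_7 = E_\infty$.

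The main obstacle is the nonvanishing of $d_6(x^4u^3)$ in part (2): once the differential is known to be nonzero the target is pinned down by degree, but establishing nonvanishing is where the content lies. I would extract it from the BSS computation of $ER(2)^*(\C\PP^\infty)$, comparing the size of $E_\infty$ forced by the known answer against what would remain if $d_6$ vanished. Concretely, the relation $0 = \vhat_1\uhat^2 + \vhat_2\uhat^4$ of (\ref{rel}), rewritten in AHSS coordinates, is precisely what produces this differential, and I would use it to confirm both the nonvanishing and the unit coefficient.
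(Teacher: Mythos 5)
Your argument is correct and reaches the same $d_6$ by essentially the same mechanism the paper uses: compare against the known BSS answer for $ER(2)^*(\C \PP^\infty)$ to see that $x^4u^3$ cannot survive, let the degree count mod $48$ (with $\vhat_2=1$) pin its target to $xwu^6$, and propagate by $x$- and $u^4$-linearity; your homology computation then matches the paper's $E_7=E_\infty$. The one place your route genuinely diverges is part (1). You treat the removal of the $x^3$-torsion as pure bookkeeping from Theorem \ref{x3torid}, but the paper explicitly cannot justify the fifth line of that identification, $\Zq[u^4]\{x^{1-2}wu^4\}$, before this stage: it is a multiplicative extension, $x^{1-2}\cdot(x^6u^3)=x^{1-2}wu^4$, and the paper resolves it (together with $x^{5-6}(x^2u)=x^{1-2}wu^2$) only at this point, by checking that no differential in what remains of $E_6$ has the right degree to account for these classes otherwise. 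So the paper carries the extra summand $\Zq[u^4]\{x^{1-2}wu^4\}$ into this proof and disposes of it by that extension argument, whereas you discharge it by citing \ref{x3torid}. That is legitimate if \ref{x3torid} is taken as an input, but since the paper's proofs of these theorems are interleaved, a fully self-contained version of your part (1) should include the two short extension arguments. Finally, your closing suggestion that relation (\ref{rel}) is what "produces" the $d_6$ is not how the paper obtains it and I would not lean on it; the counting argument you give first is the one that carries the proof.
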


\begin{thm} 
In 
$E_\infty$ of
the AHSS for $ER(2)^*(\C \PP^\infty)$
we identify the elements 
involved with $x^7$-torsion from the BSS
on the left with the AHSS elements on the right.
\[
\Zq \{x^{0-4} \uhat^2 \} = 
\Zq \{x^{2-6} u\}
\qquad
\Zq \{x^{5-6} \uhat^2 \} = 
\Zq \{x^{1-2}w u^2\}
\]
\end{thm}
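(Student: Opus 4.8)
The plan is to exploit the fact that the BSS and the AHSS both converge to $ER(2)^*(\C \PP^\infty)$, so the $x^7$-torsion detected by the BSS must be visible among the surviving classes of the AHSS. By Theorem \ref{cp} the only torsion present in $ER(2)^*(\C \PP^\infty)$ is of type $x^1$, $x^3$, and $x^7$, and with $\vhat_2=1$ the $x^7$-torsion is the $\Zq$-span of $\{x^i\uhat^2 : 0\le i\le 6\}$, where $\uhat^2=-\pontr$ in the associated graded. Theorems \ref{xtorid} and \ref{x3torid} have already located every $x^1$- and $x^3$-torsion class in the AHSS, so the classes remaining in $E_7=E_\infty$, namely $\Zq\{x^{2-6}u\}$ and $\Zq\{x^{1-2}wu^2\}$, are forced to account for the $x^7$-torsion. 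Since there are exactly seven classes on each side, the only content is to produce the correct bijection between the two seven-element sets.

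First I would pin the bijection down by degree. As $\vhat_2=1$, everything is graded over $\Z/(48)$, and using $\deg x=-17$, $\deg \uhat^2 = -32$, $\deg u = 2$, and $\deg wu^2=-4$, the class $x^i\uhat^2$ has degree $-32-17i$, giving residues $16,47,30,13,44,27,10$ for $i=0,\dots,6$. On the AHSS side $x^j u$ has degree $2-17j$, giving $16,47,30,13,44$ for $j=2,\dots,6$, while $x^j wu^2$ has degree $-4-17j$, giving $27,10$ for $j=1,2$. These residues are pairwise distinct and the two lists coincide as sets, so each $x^i\uhat^2$ must equal the unique surviving $E_\infty$ generator of the same degree. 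This yields $x^i\uhat^2 = x^{i+2}u$ for $0\le i\le 4$ and $x^i\uhat^2 = x^{i-4}wu^2$ for $i=5,6$, which is exactly the asserted identification.

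The one step deserving care, and the feature that makes the statement interesting, is the jump in AHSS (skeletal) filtration: the classes $\pontr,x\pontr,\dots,x^4\pontr$ are detected on the $u$-line in filtration $2$, whereas $x^5\pontr$ and $x^6\pontr$ appear on the $u^2$-line in filtration $4$. I would argue that this jump is forced rather than accidental. Multiplying the filtration-$2$ representative $x^6u$ of $x^4\uhat^2$ by $x$ gives $x^7u=0$ in the associated graded, since $x^7=0$ in $ER(2)^*$; hence $x^5\uhat^2 = x\cdot x^4\uhat^2$ cannot survive on the $u$-line and must be detected in strictly higher filtration, where the unique class of matching degree is $xwu^2$. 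The same computation, applied once more, accounts for $x^6\uhat^2 = x^2wu^2$. Beyond verifying this consistency of the $x$-action across the filtration jump, the argument is pure degree bookkeeping, so I expect no genuine obstacle.
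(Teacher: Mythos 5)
Your proof is correct, but it is organized differently from the paper's. The paper does not deduce the identification from the final $E_\infty$ answer; it builds it during the computation. Specifically, it first pins down $x^2u$ as the representative of $\pontr=\uhat c(\uhat)$ (because $\uhat c(\uhat)=-\tfrac12\alpha_3u^2$ has no representative in the $u^2$-filtration, forcing it into the $u$-line), gets $x^{0-4}\uhat^2=x^{2-6}u$ by multiplying by $x$, and then handles the filtration jump by the dichotomy ``either $x^5(x^2u)=xwu^2$ or $xwu^2$ supports/receives a differential,'' ruling out the latter by checking no class of the required degree exists in $E_6$. Crucially, the paper needs these extensions \emph{before} it can determine $d_6$ and hence $E_7=E_\infty$: the surviving classes $\Zq\{x^{1-2}wu^2\}$ are only known to survive because they have been shown to be $x^{5-6}\pontr$. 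Your argument inverts this order -- you take the preceding theorem's $E_\infty$ computation as given and then force the bijection by pure degree bookkeeping (your residues mod $48$ are all correct, and the seven-versus-seven count with pairwise distinct degrees does make the matching unique). As a proof of the isolated statement, granting the earlier theorems, this is valid and arguably cleaner; but be aware that it leans on a result whose own proof in the paper runs through exactly the extension argument you relegate to a ``consistency check.'' In other words, your closing paragraph (that $x\cdot x^6u=0$ in the associated graded forces $x^5\uhat^2$ into higher filtration, where $xwu^2$ is the only candidate) is not merely confirmatory -- it is the paper's actual mechanism, and it is what one must use if one wants the argument to also produce $d_6(x^4u^3)=xwu^6$ rather than presuppose it.
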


\begin{thm}
\label{powers2}
Let $\pontr $ be the element constructed in  
$ER(2)^*(\C \PP^\infty)$ 
that maps to $\uhat c(\uhat)$, which in the associated graded
object is $\pontr = - \uhat^2$.  Then, in the AHSS, the elements
are represented as follows:
\[
\pontr = x^2 u
\qquad
\pontr^2 = x^4 u^2
\qquad
\pontr^3 = x^6 u^3
\qquad
\pontr^4 =  u^8
\]
\[
\pontr^{4k+1} = x^2 u^{8k+1}
\qquad
\pontr^{4k+2} = x^4 u^{8k+2}
\]
\[
\pontr^{4k+3} = x^6 u^{8k+3}
\qquad
\pontr^{4k+4} =  u^{8k+8}
\]
\end{thm}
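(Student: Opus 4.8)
The plan is to propagate the single identification $\pontr = x^2 u$ through the multiplicative structure of the AHSS, using the relation $x^7 = 0$ to locate the one filtration jump and the natural map $ER(2) \to E(2)$ to pin it down. The starting point is the preceding identification of the $x^7$-torsion generators, in which the leading term $\uhat^2$ (which by Lemma \ref{lemma} represents $-\pontr$) corresponds to $x^2 u$; since these $E_\infty$ classes are $2$-torsion, the sign is irrelevant and we read off $\pontr = x^2 u$.

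Next I would handle the low powers by multiplicativity. In a multiplicative spectral sequence the product of classes detected in filtrations $p$ and $q$ is detected by the product of their representatives in filtration $p+q$, provided that product is nonzero in $E_\infty$. Thus $\pontr^2$ and $\pontr^3$ have candidate leading terms $x^4 u^2$ and $x^6 u^3$, and these are indeed nonzero permanent cycles, being (unit multiples of) the surviving classes already exhibited in the earlier $E_\infty$ computations. Hence $\pontr^2 = x^4 u^2$ and $\pontr^3 = x^6 u^3$.

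The crux is $\pontr^4$. Here multiplicativity only gives the leading term $x^6 u^3 \cdot x^2 u = x^8 u^4$, which \emph{vanishes} because $x^7 = 0$; consequently $\pontr^4$ lies in strictly higher filtration, and the theorem asserts it jumps all the way to $u^8$. To identify it I would compare with the AHSS for $E(2)$ via the homotopy-fixed-point map of (\ref{stexcp}). That AHSS collapses, since $E(2)$ is complex orientable and concentrated in even degrees, and there $\pontr$ maps to $\uhat c(\uhat)$ (Lemma \ref{lemma}), so $\pontr^4$ maps to $(\uhat c(\uhat))^4$, whose leading term is a unit multiple of $\uhat^8$, detected purely in filtration $8$ by $u^8$. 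Since the AHSS map is filtration preserving, the filtration of $\pontr^4$ is at most $8$; and because the non-torsion part of $ER(2)^*$ injects into $E(2)^*$, the (non-torsion) leading term of $\pontr^4$ cannot map to zero, so it must map onto $u^8$ in filtration $8$. This simultaneously rules out the intermediate filtrations $5,6,7$ and forces $\pontr^4 = u^8$. I expect this filtration jump to be the main obstacle: the naive multiplicative bookkeeping stops working exactly here, and one genuinely needs the comparison map together with the injectivity of the non-torsion summand to certify both the upper bound and the non-vanishing.

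Finally I would extend to all exponents by multiplicativity. Taking powers, $\pontr^{4k} = (u^8)^k = u^{8k}$ (nonzero and non-torsion, again confirmed by the $E(2)$-comparison), and then $\pontr^{4k+i} = \pontr^{4k}\,\pontr^{i} = u^{8k}\cdot x^{2i}u^{i} = x^{2i} u^{8k+i}$ for $i = 1,2,3$. Each of these products is nonzero in $E_\infty$ because it is a unit multiple of one of the $x^3$-, $x^5$-, or $x^7$-torsion generators identified in the earlier AHSS theorems; non-vanishing here cannot come from the $E(2)$-comparison (these classes are $x$-torsion and die in $E(2)$), so it must be imported from the completed AHSS computation. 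Throughout, the fact that every $\pontr^{m}$ is nonzero to begin with is guaranteed by Theorem \ref{thmone}, where $\pontr$ appears as a genuine polynomial generator.
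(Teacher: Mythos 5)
Your identifications of $\pontr$, $\pontr^2$, $\pontr^3$ by multiplicativity, and the propagation from $\pontr^{4k}$ to $\pontr^{4k+i}$, are fine (aside from the stray reference to $x^5$-torsion, which does not occur for $\C\PP^\infty$). The gap is at the step you yourself flag as the crux, $\pontr^4 = u^8$. The comparison with the collapsing AHSS for $E(2)$ correctly bounds the filtration above (column $\le 8$), and multiplicativity with $x^7=0$ bounds it below (column $\ge 5$), but your exclusion of columns $5$--$7$ rests on the claim that the detecting class of the non-torsion element $\pontr^4$ must itself be non-torsion and therefore cannot die under $ER(2)\to E(2)$. That premise is false in exactly the way this theorem illustrates: $\pontr$, $\pontr^2$, $\pontr^3$ are non-torsion elements of $ER(2)^*(\C\PP^\infty)$ detected by the $2$-torsion classes $x^2u$, $x^4u^2$, $x^6u^3$. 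If $\pontr^4$ were detected by a $2$-torsion class in column $5$, $6$, or $7$, that class would map to zero in the $E(2)$-AHSS and the image $(\uhat c(\uhat))^4$ would simply be detected in higher filtration --- no contradiction. And there is a concrete candidate your argument does not exclude: $x^2wu^5$ lies in column $5$, survives to $E_\infty$ (it sits inside the module $\Zq[\alpha,u^2]\{x^2\alpha u, x^2wu\}$ of Theorem \ref{xtorid}), and has total degree $\deg(x^2w)+10\equiv 16\equiv\deg(\pontr^4)\pmod{48}$.

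The paper closes this hole differently: it never argues locally about $\pontr^4$. Having computed the full $E_\infty$ of the AHSS and, independently, the full BSS answer, it matches the two module by module (Theorems \ref{xtorid}, \ref{x3torid}, and the $x^7$-torsion identification), so that every class in the low columns is already assigned to some other BSS generator --- in particular $x^2wu^5$ is accounted for as ($\alpha^k$ times) an $x^1$-torsion generator --- while $\uhat^8 = v_2^{24}u^8 = u^8$ is the ``easy case'' in which the substitution $\uhat = v_2^3u$ (with $\vhat_2=1$) lands on a class that genuinely exists in $E_\infty$, so no filtration shift is needed. Theorem \ref{powers2} is then read off from that global accounting. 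To repair your argument you would need to import exactly this accounting to certify that no class in columns $5$--$7$ of the relevant degree is available to detect $\pontr^4$; at that point you are reproducing the paper's proof rather than giving an independent one.
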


The rest of this section consists of the proofs for all of
the theorems stated in this section.
Before we embark on that trip, we offer a small visual guide
at the request of the referee.  Keep in mind that we have
made our theory $ER(2)^*(-)$ 48-periodic, and we only offer
degrees zero through minus 10 for the coefficients here.  We
also truncate as if this was $\C \PP^4$.  Also, many terms have
arbitrarily high powers of $\alpha $ on them.  If a term has an
$x$ in it, is represents a $\Zq$.  If not, it represents a
$\Zp$.  The differential, $d_6$, is not in the range of our
picture.  Neither is $\pontr$ or $\pontr^2$, but $\pontr^3$
is represented here as $x^6 u^3$, in filtration 6 as opposed
to the expected filtration 12.  Note also that the $d_4$ on
$x^3 u^3$ goes off our scale.  That means that if we were really
computing $\C \PP^4$, the element $x^3 u^3$ would have to be a
permanent cycle.
As it stands, all of the elements in the part of the AHSS
pictured below left after $d_2$ and $d_4$ are permanent cycles.

\[
\xymatrix@R5pt{
\    & 2 &  4  & 6  & 8 \\
0      & u \ar[dr]^-{d_2}  & u^2 \ar[rrddd]^(.2){d_4} & u^3  \ar[dr]^-{d_2} & u^4 \\
-1    & x\alpha u \ar[dr]^-{d_2} & x \alpha u^2 & x\alpha u^3 \ar[dr]^-{d_2} & x \alpha u^4 \\
-2    & x^{2}\alpha^{2} u & x^{2} \alpha^{2} u^2 & x^{2}\alpha^{2} u^3 & x^{2} \alpha^{2} u^4 \\
-3    & x^{3} u & x^{3}  u^2  \ar[rrddd]^(.2){d_4}& x^{3} u^3 \ar[rdd]^{d_4} & x^{3}  u^4 \\
-4      & \alpha^2 \alpha_3 u  & \alpha^2 \alpha_3 u^2 & \alpha^2 \alpha_3 u^3  & \alpha^2 \alpha_3 u^4 \\
-5 &&&& \\
-6    & x^{6} u & x^{6}  u^2 & x^{6} u^3 & x^{6}  u^4 \\
-7 &&&& \\
-8      & w u  \ar[dr]^-{d_2} & w u^2 & w u^3  \ar[dr]^-{d_2} & w u^4 \\
-9    & x\alpha w u \ar[dr]^-{d_2} & x \alpha w u^2 & x\alpha w u^3 \ar[dr]^-{d_2} & x \alpha w u^4 \\
-10    & x^{2}\alpha^{2}  w u & x^{2} \alpha^{2} w u^2 & x^{2}\alpha^{2} w u^3 & x^{2} \alpha^{2} w u^4 \\
}
\]

\begin{proof}[Proofs of all the theorems]
We take a short side trip to think about the BSS for $\C \PP^2$.
It
starts with $E(2)^*$ free on $\uhat$ and $\uhat^2$.  The computation
of $d_1$ is identical to that for $\C \PP^\infty$
by naturality
and we have $d_1(v_2 \uhat)$ is non-zero,
but $v_2 \uhat= v_2 (v_2^3 u) = v_2^4 u$ and $d_1$ commutes with $v_2^2$, so
$d_1$ is non-zero on $u$, so $u$ does not exist in $E_\infty$ for the AHSS.

In the AHSS for $\C \PP^2$, the only way $u$ can go away is if
$d_2(u) = x \alpha u^2$, since this is the only
element in the degree of the image.  
Technically, we could have
$d_2(u) = x \alpha^{3k+1} u^2$, but this would immediately conflict
with the answer we get from the BSS.
So, 
$d_2(u) = x \alpha u^2$
is what must happen, and by
naturality, this happens in the AHSS for $\C \PP^\infty$ as well.

Because $2x = 0$, $d_2(u^2) = 0$.  All we need to do to compute
$E_3$ is find the kernel of $d_2$ on $u^{2k+1}$ and the cokernel
on $u^{2k}$.
We know that $x \alpha_i = 0$, where we include $\alpha_0 = 2$.
We also know that $x^3 \alpha = 0$.
The computation of $E_3$ is a straightforward and as stated.

For degree reasons, there is no $d_3$ (or any $d_r$, $r$ odd), 
so we have $E_4 = E_3$.

Before we compute our $d_4$, we want to identify
all of the $x$-torsion generators and eliminate them from
consideration, that is, make the identifications in
Theorem \ref{xtorid}.  We begin with the first term of
the $x$-torsion from Theorem \ref{cp}.  Making the substitution
$\uhat = v_2^3 u$, and working over $\Zp[v_2^{\pm 2}]$, this
turns into 
\[
\Zp [ \vhat_1, v_2^{\pm 2}, u^2 ] \{ 2u, 2u^2 \}.
\]
Making the identification, $2 v_2^{2i} = \alpha_i$, we find we
have the first line of Theorem \ref{xtorid}.

Now break up the second term of the $x$-torsion in Theorem \ref{cp}
into the parts remaining on the left of Theorem \ref{xtorid}.
With the usual substitutions $\uhat = v_2^3 u$ and 
$w = \vhat_1 v_2^4$,
the second line of Theorem \ref{xtorid} becomes obvious, although
it must be kept in mind that the generators don't correspond, 
but
it
is the whole module that is the same.  

The final line of Theorem \ref{xtorid} is 
somewhat more of a challenge.  Here we recall, looking at
the image in $E(2)^*$,
that 
$v_2^2  $ and $v_2^6 $ are 
$\frac{1}{2} \alpha_1 $
and
$\frac{1}{2} \alpha_3 $
respectively.  
To find a representative for 
$\alpha \frac{1}{2} \alpha_1 u^2 $
and
$\alpha \frac{1}{2} \alpha_3  u^2$,
the only elements available are $x^2 w u$ and $x^2 \alpha u $
respectively. 
This, together with our usual substitution of $\uhat = v_2^3 u$,
is enough to give us our last line in the identification.

After taking all of the $x$-torsion from the BSS out of
$E_4$ of the AHSS, we get the stated remaining terms in $E_4$.

Recall that $\uhat = v_2^3 u$ so $u^4 = \uhat^4 v_2^{-12}$ 
and that this is in the image
of $d_3$ in the BSS, so it is both a permanent cycle and $x^3$-torsion.
So, $x^3 u^4$ must be the target of a differential.  The differential
$d_2$ did not hit it.  
The only possible $d_4$ is $d_4(u^2) = x^3 u^4 $.
In principle, a $d_6$ could hit it, but this would require 
$d_6(z_2 u) = x^3 u^4$  for some element
$z_2$ 
with the 
degree of $z_2$ 
equal to 2, and
there is no such element.
We conclude that we must have 
$d_4(u^2) = x^3 u^4 $.

Other than $u^2$, we have one other 
generator (over $ER(2)^*$) in what remains of $E_4$
that $d_4$ could be non-zero on.  
It is $x^2 u$.

The element $x^2 u$ is the most interesting.  We know that
$\uhat c(\uhat)$ exists.  In the AHSS this would be
$- u^2 v_2^6 = - \frac{1}{2} \alpha_3 u^2$.  This doesn't
exist in the filtration associated with $u^2$ in the AHSS,
but $\alpha_3 u^2$ does.  In order to divide it by 2, we
have to go to the previous filtration, where we find $x^2 u$,
which must represent $\uhat c(\uhat)$, and so cannot have
a differential on it.

Now we are free to compute $d_4$ from 
$d_4(u^2) = x^3 u^4 $.
We get the obvious 
\[
d_4(x^{0-3} u^{4k+2}) = x^{3-6} u^{4k+4}.
\]
In addition, we know that multiplication by $x^2 u$ commutes
with $d_4$, so we get another family:
\[
d_4(x^{2-3} u^{4k+3}) = x^{5-6} u^{4k+5}.
\]

There is so little left
of $E_4$ without the $x$-torsion that $d_4$ is easy to compute
and gives the stated result for $E_5 = E_6$ without the
$x$-torsion.

Before we continue on to $d_6$, we want to identify
the $x^3$-torsion from the BSS computation.
Rewrite the $x^3$-torsion from the BSS of Theorem \ref{cp}
into the terms on the left of 
Theorem \ref{x3torid}. 

After that, make the substitution $\uhat = v_2^3 u$, keeping in
mind our ever present $v_2^8 = 1$.  
With this, we have $\uhat^2 = v_2^6 u^2$, $\uhat^4 = v_2^4 u^4$,
$\uhat^6 = v_2^2 u^6$, and $\uhat^8 = u^8$.
The terms on the left
of Theorem \ref{x3torid} reduce to, in order, 
\[
\Zq [u^4]\{x^{0-2} u^4,x^{0-2} v_2^6 u^6, x^{0-2} v_2^4 u^4 , 
x^{0-2} v_2^2 u^6 \}
\]

The one easy case now is the first one, and it gives the first
line of Theorem \ref{x3torid}.

For the second line, we look at
$v_2^6 u^6 = \frac{1}{2}\alpha_3 u^6$.
This must be some element $z_i u^i$ in our $E_6$, 
with $0 < i < 6$, and
our $z_i$ must have
degree $-24 -2i$.  The only possibility is $ x^2 u^5$.
This takes care of the second line.

Next we need to consider
$v_2^4 u^4 = \frac{1}{2}\alpha_2 u^4$.
For this element, we want
some $z_i u^i$, with $0 < i < 4$, and
the degree of $z_i $ to be $-16 -2i$.  
The only possible element here is $x^4 u^2$.  
This completes the third line.

The fourth line is for
$v_2^2 u^6 = \frac{1}{2}\alpha_1 u^6$.
This must be some element $z_i u^i$, with $0 < i < 6$, and
the degree of $z_i $ equal to $-2i$.  The only element that meets
this criteria is $ x^6 u^3$, so this must be it.  Unlike the
others, this one does not have $x^{1-2}$ times it non-zero
in the AHSS.  So, we are missing the fifth line.

The unfortunate consequence of this is that we can not remove all the
$x^3$-torsion from $E_6$ before we compute $d_6$, but
we have to leave the
\[
\Zq[u^4]\{ x^{1-2} w u^4 \}
\]
in $E_6$.  So, after taking out all $x$-torsion and most of the
$x^3$-torsion, all we have left in $E_6$ is
\[
\Zq[u^4]\{x^{1-2} w u^2 \}
\qquad
\Zq \{x^{2-6} u\}
\]
\[
\Zq[u^4]\{x^{4-5} u^3 \}
\qquad
\Zq[u^4]\{ x^{1-2} w u^4 \}
\]
Now comes a tricky part. We have identified our $x^7$-torsion
generator, $\pontr = x^2 u$, already.  The second term here can
be eliminated as $x^{0-4} \pontr$.  We are now missing 3 bits of
information.  We do not know $x^{5-6} (x^2 u)$, $x^{1-2} (x^6u^3)$,
and any differentials that we might have.

Either we have $x^5 (x^2 u) = x w u^2$, or there must be a differential
on $x w u^2$, i.e. $d_{2i}(xw u^2) = z u^{2+i}$ where the degree of $z$
must be
the degree of $xw$ minus $2i-1$ (mod 48).  There is no such $z$
left in our $E_6$.  Consequently, we have $x^5 (x^2 u)= x w u^2$.

Next, either $x (x^6 u^3) = x w u^4$, or there must be a differential
on the last term.  A similar argument to the above shows there is
no such element.  With these extension problems solved, all that
is left of $E_6$ is:
\[
\Zq[u^4]\{x^{1-2} w u^6 \}
\qquad
\Zq[u^4]\{x^{4-5} u^3 \}
\]
The only way they can go away is to have $d_6(x^4 u^3) = x w u^6$.

Theorem \ref{powers2} is now just a matter of inspection.

This completes the proof of all the theorems in this section.
\end{proof}

\begin{remark}
We don't need this, but it is an interesting observation so we
comment on it.  We just showed that in the AHSS, the solutions
to some extension problems are now clear.  We have
\[
2(x^2 u) = \alpha_3 u^2
\qquad
2(x^2 u)^2 = 2 x^4 u^2 =  \alpha_2 u^4
\]
\[
2(x^2 u)^3 = 2 x^6 u^3 =  \alpha_1 u^6
\]
Although the powers of $\pontr$, represented by $x^2 u$, are not
in the expected filtrations, 2 times them are.
\end{remark}

\section{$ER(2)^*(\C \PP^n)$}
\label{cpn}

We can first
note that we have already computed the AHSS for $ER(2)^*(\C \PP^n)$
because the $E_2$ term from the AHSS for $\C \PP^\infty$ surjects.
That means that the only differences are given by the differential
$d_2$ on the $u^n$ filtration, $d_4$ on the $u^{n,n-1}$ filtrations,
and $d_6$ on the $u^{n,n-1,n-2}$ filtrations.
Because there are sometimes no targets, elements survive that
are not in the image from $ER(2)^*(\C \PP^\infty)$.
We will use these facts to compute the BSS for $ER(2)^*(\C \PP^n)$.
As usual for our evenly graded BSSs, we have $d_{2,4,6} = 0$.

To be able to combine calculations we need a notational
convention.  
\[
\text{by} \quad
a^{\{b,c\}} 
d^{\{e,f\}} 
\quad
\text{we mean}
\quad
a^b d^e
\quad
\text{and}
\quad
a^c d^f.
\]
We have stated the description of $ER(2)^*(\C \PP^n)$ already in
Section \ref{statements} and Theorem \ref{thmone}.  Here we
describe the BSS and prove everything.
Our computations are complicated due to having 8 distinct cases.

\begin{thm}
Filtering $E(2)^*(\C \PP^{2j}) = E(2)^*[\uhat]/(\uhat^{2j+1})$ by powers
of $\uhat$ we give $E_r$ as an associated graded object of
the actual $E_r$ of the BSS for 
the reduced
$ER(2)^*(\C \PP^{2j})$.
\begin{enumerate}
\item[]
$E_1 = 
E(2)^*[\uhat]/(\uhat^{2j})
 \{\uhat\} $
\item[]
$E_2 = E_3 =
\Zq[v_2^{\pm 2}, \uhat^{2}]/(\uhat^{2j}) \{\uhat^{2}\}$
\item[]
$E_4 = E_5 = E_6 = E_7 =
\Zq[v_2^{\pm 4} ] \{\uhat^2, v_2^2 \uhat^{2j}\}$
\end{enumerate}
\end{thm}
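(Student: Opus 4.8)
The plan is to obtain the entire computation by naturality from the already-established BSS for $\C \PP^\infty$ (Theorem \ref{cp} and its proof), treating the truncation at the top power $\uhat^{2j}$ as the only source of new phenomena. First I would record the setup: the inclusion $\C \PP^{2j} \hookrightarrow \C \PP^\infty$ induces a surjection $E(2)^*(\C \PP^\infty) \twoheadrightarrow E(2)^*(\C \PP^{2j})$ with kernel $(\uhat^{2j+1})$, which is compatible with the filtration by powers of $\uhat$. This gives a surjection of filtered $E_1$-terms and, by naturality of the exact couple (\ref{exactcouple}), a map of the associated spectral sequences. On reduced cohomology the associated graded $E_1$ is visibly $E(2)^*[\uhat]/(\uhat^{2j})\{\uhat\}$.

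Next I would compute $d_1$ to get $E_2 = E_3$. Since $d_1(y) = v_2^{-3}(1-c)(y)$ is natural, the two sub-differentials $d_{1,1}$ and $d_{1,2}$ are the literal restrictions of those for $\C \PP^\infty$. The only thing to verify is that truncation loses no source at this stage: $d_{1,1}$ preserves the $\uhat$-degree, and the highest $d_{1,2}$-source $v_2 \uhat^{2j-1}$ still has its target $\vhat_1 \uhat^{2j}$ present. Hence taking homology yields the clean truncation $\Zq[v_2^{\pm 2},\uhat^2]/(\uhat^{2j})\{\uhat^2\}$, and $d_2 = 0$ for parity reasons, so $E_2 = E_3$.

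The main point is the $d_3$ computation, and the truncation bookkeeping there is the hard part. Away from the top, $d_3$ is inherited from $\C \PP^\infty$: $d_3(\uhat^2)=0$ and, using the relation (\ref{rel}) $\vhat_1 \uhat^2 = \vhat_2 \uhat^4$, one has $d_3(v_2^{4m+2}\uhat^{2b}) = v_2^{4m-12}\uhat^{2b+2}$ for $b \le j-1$. The new phenomenon is that for $b=j$ the target $\uhat^{2j+2}$ has been truncated away, so the classes $v_2^{4m+2}\uhat^{2j}$, which were $d_3$-sources in $\C \PP^\infty$, become permanent cycles; they are not boundaries either, since all $d_3$-boundaries lie in $v_2$-degree $\equiv 0 \pmod 4$. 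These contribute exactly the extra summand $\Zq[v_2^{\pm 4}]\{v_2^2 \uhat^{2j}\}$, which together with the usual bottom survivor $\Zq[v_2^{\pm 4}]\{\uhat^2\}$ gives $E_4 = \Zq[v_2^{\pm 4}]\{\uhat^2, v_2^2\uhat^{2j}\}$.

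Finally I would show $E_4 = E_5 = E_6 = E_7$ by a degree argument. Both families are $v_2^4$-periodic (period $-24$), and $\deg d_r = 17r+1$. Within either family a nonzero differential forces $17r+1 \equiv 0 \pmod{24}$, i.e.\ $r \equiv 7$, so $d_4,d_5,d_6$ cannot act within a family. For a cross-family differential one compares $17r+1$ against the gap $\deg(v_2^2\uhat^{2j}) - \deg(\uhat^2) \equiv 20 - 8j \pmod{24}$; solving the resulting congruence shows $r \in \{4,5,6\}$ is impossible for every $j$ (inspection of $j \bmod 3$). Hence $d_4 = d_5 = d_6 = 0$ and $E_4 = E_5 = E_6 = E_7$. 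I would note that the genuinely nonzero $d_7$ (and the value of $E_8$, including the $x^7$-torsion survivors coming from $v_2^2\uhat^{2j}$) is exactly what the later theorems on $\C \PP^{8k+i}$ handle, which is why this statement stops at $E_7$.
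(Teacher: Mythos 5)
Your proposal is correct and follows essentially the same route as the paper: inherit $d_1$ (via $d_{1,1}$, $d_{1,2}$) and $d_3$ by naturality from the surjection $E(2)^*(\C\PP^\infty)\twoheadrightarrow E(2)^*(\C\PP^{2j})$, observe that the only new feature is the surviving class $v_2^2\uhat^{2j}$ whose $d_3$-target is truncated away, and rule out $d_4$, $d_5$, $d_6$ by degree considerations. Your mod-$24$ congruence analysis simply makes explicit the paper's assertion that ``for degree reasons, we can only have $d_7$, not $d_5$.''
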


\begin{thm}
Filtering $E(2)^*(\C \PP^{2j+1}) = E(2)^*[\uhat]/(\uhat^{2j+2})$ by powers
of $\uhat$ we give $E_r$ as an associated graded object of
the actual $E_r$ of the BSS for 
 the reduced
$ER(2)^*(\C \PP^{2j+1})$.
\begin{enumerate}
\item[]
$E_1 = 
E(2)^*[\uhat]/(\uhat^{2j+1})
 \{\uhat\} $
\item[]
$E_2 = E_3 =
\Zq[v_2^{\pm 2}, \uhat^{2}]/(\uhat^{2j})
\{\uhat^{2}\}
\qquad
\Zq[\vhat_1, v_2^{\pm 2}]\{ v_2 \uhat^{2j+1} \}$
\item[]
$E_4 = E_5 = 
\Zq[v_2^{\pm 4}]\{\uhat^2,
v_2^2 \uhat^{2j},
  v_2^{2j+1} \uhat^{2j+1} \} $
\item[]
$E_5 = E_6 = E_7 
\quad 
\text{for } j=4k+1 \text{ and } 4k+3  $
\item[]
$E_6 = E_7 =
\Zq[v_2^{\pm 4}]\{\uhat^2\}
\quad
\text{ for } j=4k \text{ and } 4k+2. $
\end{enumerate}
\end{thm}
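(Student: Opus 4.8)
The plan is to run the Bockstein spectral sequence exactly as in the computation of $ER(2)^*(\cp)$ in Section \ref{bss1}, using the surjection $E(2)^*(\cp) \fib E(2)^*(\C \PP^{2j+1})$ to inherit the bulk of the differentials by naturality, and then to track the finitely many new phenomena forced by the truncation $\uhat^{2j+2}=0$. Starting from $E(2)^*(\C \PP^{2j+1}) = E(2)^*[\uhat]/(\uhat^{2j+2})$ and filtering by powers of $\uhat$ gives the stated $E_1$. I would compute $d_1$ through the same auxiliary differentials $d_{1,1}$ and $d_{1,2}$ as for $\cp$, since every formula is inherited from the filtered quotient of $\cp$. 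The one new feature is that the class $v_2\uhat^{2j+1}$, whose $d_{1,2}$-target in $\cp$ was (a multiple of) $\vhat_1\uhat^{2j+2}$, now has that target equal to zero, so the whole module $\Zq[\vhat_1,v_2^{\pm 2}]\{v_2\uhat^{2j+1}\}$ survives. As $d_2=0$ because all elements are even, this yields $E_2 = E_3$.

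For $d_3$ I would again import the $\cp$-computation on the first summand. The relation (\ref{rel}), $\vhat_1\uhat^2 = \vhat_2\uhat^4$, and the formula $d_3(v_2^2\uhat^{2k}) = v_2^{-12}\uhat^{2k+2}$ carry over verbatim, except that at the top the target $v_2^{-12}\uhat^{2j+2}$ vanishes, so $v_2^2\uhat^{2j}$ becomes a surviving cycle. On the isolated odd summand the monomial $\uhat^{2j+1}$ cannot be raised, so $d_3$ acts purely on the coefficient $\Zq[\vhat_1,v_2^{\pm 2}]$ through $d_3(v_2^2)=\vhat_1 v_2^{-4}$; this is identical to the cohomology-of-a-point computation recalled in Section \ref{preliminaries} and collapses $\Zq[\vhat_1,v_2^{\pm 2}]$ to $\Zq[v_2^{\pm 4}]$. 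Combining this with relation (\ref{rel}) to rewrite the surviving representative is what pins it down as $v_2^{2j+1}\uhat^{2j+1}$. With $d_4=0$ this gives the three-tower $E_4 = E_5$.

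The final step is $d_5$ together with $d_7$, and this is where the case split on $j \bmod 4$ enters and is the main obstacle. The even differentials $d_4,d_6$ vanish, and $\uhat^2$ is a permanent cycle since it represents $\pontr$, so the only candidate $d_5$ joins the two new towers $v_2^2\uhat^{2j}$ and $v_2^{2j+1}\uhat^{2j+1}$. A degree count decides exactly when this is allowed: writing a potential source $v_2^{4a+2}\uhat^{2j}$ and target $v_2^{4b+2j+1}\uhat^{2j+1}$, the requirement that their degree difference equal $\deg d_5 = 86$ reduces to $a-b = 4 + j/2$, which has an integer solution precisely when $j$ is even. Thus for $j=4k$ and $j=4k+2$ the differential $d_5$ is forced to be an isomorphism of the two top towers (these are exactly the $x^5$-torsion generators, matching $2j+1\equiv 1,5 \pmod 8$), leaving $E_6 = E_7 = \Zq[v_2^{\pm 4}]\{\uhat^2\}$; while for $j=4k+1$ and $j=4k+3$ the degree obstruction forces $d_5=0$, so all three towers persist and $E_5 = E_6 = E_7$.

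The delicate points, which I expect to require the most care, are verifying that $d_5$ is genuinely nonzero (not merely degree-admissible) in the even case --- most cleanly checked against the $x^5$-torsion predicted by the Atiyah--Hirzebruch computation of Section \ref{AHSS}, via the circle of arguments advertised there --- and confirming the exact exponent $v_2^{2j+1}$ on the surviving odd class by bookkeeping with relation (\ref{rel}). Everything else is a direct transcription of the $\cp$ and point computations through the truncation.
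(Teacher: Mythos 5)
Your overall route is the paper's: inherit $d_1$ and $d_3$ through the surjection from $E(2)^*(\C\PP^\infty)$, treat the top cell separately, and use the AHSS $x^5$-torsion to force the $d_5$ in the even-$j$ cases. The genuine gap is in your treatment of $d_3$ on the top-cell summand $\Zq[\vhat_1,v_2^{\pm 2}]\{v_2\uhat^{2j+1}\}$. Saying that ``$d_3$ acts purely on the coefficient'' does not determine the answer, because the identification of the top filtration with a shifted copy of the coefficient spectral sequence depends on the choice of generator, and different choices give genuinely different answers: running the point's $d_3$ on the coefficient of $v_2\uhat^{2j+1}$ leaves $\Zq[v_2^{\pm4}]\{v_2\uhat^{2j+1}\}$, whereas the correct survivor is $\Zq[v_2^{\pm4}]\{v_2^{2j+1}\uhat^{2j+1}\}$; these agree only for $j$ even (for $j$ odd they differ by $v_2^2$, i.e., by $12$ in degree modulo the $v_2^{\pm 4}$-periodicity). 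Relation (\ref{rel}) cannot repair this: it trades $\vhat_1\uhat^2$ for $\vhat_2\uhat^4$, i.e., it raises the power of $\uhat$, which is impossible in the top filtration. The paper's mechanism, which you never invoke, is the collapse map $\C\PP^{2j+1}\to S^{4j+2}$: the sphere generator maps to $u^{2j+1}=v_2^{2j-3}\uhat^{2j+1}$, which is therefore a permanent cycle, and naturality from the sphere gives $d_3(v_2^2u^{2j+1})=\vhat_1v_2^{-4}u^{2j+1}$, pinning the survivor to $\Zq[v_2^{\pm4}]\{u^{2j+1}\}=\Zq[v_2^{\pm4}]\{v_2^{2j+1}\uhat^{2j+1}\}$. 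This exponent is load-bearing: your own $d_5$ degree count, which is what produces the dichotomy between $j$ even and $j$ odd, comes out to ``$a-b=4$, always solvable'' if the generator is $v_2\uhat^{2j+1}$, and the case split collapses.

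A smaller issue: for $j$ odd, your statement that ``the degree obstruction forces $d_5=0$'' only rules out a $d_5$ between the two new towers. A $d_5$ from the $\uhat^2$ tower into the odd tower is degree-admissible for some residues of $j$ (e.g.\ $j\equiv 1 \bmod 6$), and ``$\uhat^2$ is a permanent cycle'' only disposes of the generator, not of $v_2^{4c}\uhat^2$ for $c\neq 0$. You need naturality here too: the whole $\uhat^2$ tower is in the image from $\C\PP^\infty$, where $E_4=E_7$, so $d_5$ vanishes on it, and the odd tower is in the image from the sphere, where $d_5=0$ as well. With the top-cell identification and this naturality argument supplied, the rest of your proposal is correct and coincides with the paper's proof.
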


The rest of this section is dedicated to the computations proving 
the various theorems about $ER(2)^*(\C \PP^n)$.
Up to $E_5$, the computations pretty much follow from our work
with $\cp$.  After that, technically there are 8 cases.  We present
here a reference guide (at the request of the referee) for $d_5$
and $d_7$ in these 8 cases.  We will ignore the $\uhat^2$ and
$v_2^4 \uhat^2$ terms because the $d_7$ there comes from $\cp$.
We can get all 8 cases from just 3 diagrams.
\[
\xymatrix@R1pt{
&&v_2^2 \uhat^{4k} \ar[r]^-{d_5} & v_2 \uhat^{4k+1}&& \\
 \C \PP^{4k+1} && && & \\
&&v_2^6 \uhat^{4k} \ar[r]^-{d_5} & v_2^5 \uhat^{4k+1}& &\\
}
\]
\[
\xymatrix@R1pt{
&&v_2^2 \uhat^{8k+2}  & v_2^3 \uhat^{8k+3} \ar[dd]^-{d_7}   &&  \\
 \C \PP^{8k+3} && && & \\
&&v_2^6 \uhat^{8k+2} \ar[uu]^-{d_7} & v_2^7 \uhat^{8k+3}& &\\
}
\]
\[
\xymatrix@R1pt{
&&v_2^2 \uhat^{8k+6}\ar[dd]^-{d_7}  & v_2^3 \uhat^{8k+7}    &&  \\
 \C \PP^{8k+7} && && & \\
&&v_2^6 \uhat^{8k+2}  & v_2^7 \uhat^{8k+7}\ar[uu]^-{d_7}& &\\
}
\]
This displays the results for the odd spaces, but we can restrict
$8k+3$ and $8k+7$ to $8k+2$ and $8k+6$ respectively to get the
appropriate $d_7$ for those spaces.  Then, $8k$ and $8k+6$ are the
same and $8k+2$ and $8k+4$ are the same.  This should provide
a guide for when the proofs get confusing.


\begin{proof}[Proofs]

For even spaces,
our BSS $E_1$ is 
$
E(2)^*(\C \PP^{2j}) = E(2)^*[\uhat]/(\uhat^{2j+1}) $.
Since $E(2)^*(\C \PP^\infty)$ surjects to this, we inherit 
our computation of $d_{1,1}$, $d_{1,2}$ and, consequently,
the entirety of $d_1$.
The $x$-torsion and $E_2$ are as stated.

We also inherit
$d_3$ from $\C \PP^\infty$ and can read off the $x^3$-torsion
and $E_4$ directly.
Note that, at this stage, the 
only difference between $\C \PP^{2j}$ and $\C \PP^\infty$
is the lack of a $d_3$ on $v_2^2 \uhat^{2j}$.

For degree reasons, we can only have $d_7$, not $d_5$, so $E_4 = E_7$.
On the first term, $d_7$ is inherited from $\C \PP^\infty$,
but on the last term, there are several cases.

We consider the map:
\[
\C \PP^{8k+2i} \lra S^{16k+4i}.
\]
In $E(2)^*(-)$, the generator for the sphere, $s_{16k+4i}$ maps
to $u^{8k+2i}$.  The generator for the sphere must
be a permanent cycle, so must $u^{8k+2i} \in E(2)^*(\C \PP^{8k+2i})$.
We have $\uhat = v_2^3 u$, so we know that the following is a permanent cycle 
\[
u^{8k+2i} = (v_2^{-3} \uhat)^{8k+2i} 
= v_2^{-24k-6i} \uhat^{8k+2i}
= v_2^{-6i} \uhat^{8k+2i}
= v_2^{2i} \uhat^{8k+2i}.
\]
Unfortunately, this only exists in $E_7$ for $i=\{1,3\}$.
So, we have
\[
d_7(v_2^4 (v_2^{2i} \uhat^{8k+2i})) = 
v_2^{2i} \uhat^{8k+2i}
\qquad
i = \{1,3\}.
\]

This gives the $x^7$-torsion as stated for $8k+2$ and $8k+6$.

We now have to deal with $
8k$ and $8k+4$
where we cannot use
the sphere and naturality.  We'll try to do both at the
same time even though they have different outcomes.
We have two elements in each case, 
$v_2^{\{2,6\}} \uhat^{8k}$  and
$v_2^{\{2,6\}} \uhat^{8k+4}$. 

In each case, one
must be the
source and one the target.  If a target, it has to exist
in the AHSS, and would be represented by one of
$
v_2^{\{2,6\}} \uhat^{8k}  =
v_2^{\{2,6\}} u^{8k}$, and, respectively,
$v_2^{\{2,6\}} \uhat^{8k+4} =
v_2^{\{6,2\}} u^{8k+4}$.
It is important to 
note how in the $8k+4$ case, the powers of $v_2$ got 
switched around in the transition from $\uhat$ to $u$.  

The element we are searching for (the target)
cannot be in the image from
$\C \PP^\infty$ because there is no $x^7$-torsion element there
that could come and hit it.  
These elements
cannot be represented in the filtration occupied by $u^{8k+\{0,4\}}$
because 
$v_2^2 u^{8k+\{0,4\}} =  \frac{1}{2}\alpha_1 u^{8k+\{0,4\}}$ and
$v_2^6 u^{8k+\{0,4\}} =
 \frac{1}{2}\alpha_3 u^{8k+\{0,4\}}$ do not exist there. 
Because they cannot be in the image from $\C \PP^\infty$,
they must be a source for
an AHSS differential in $\C \PP^\infty$ but not in $\C \PP^n$. 
So, the class we search for must be in the filtration of
$u^{8k+\{-1,3\}}$. 
(There is no $d_6$ on the filtration for
$u^{8k+\{-2,2\}}$ and so everything in that filtration is in the
image from $\C \PP^\infty$ and not what we are looking for.)  
The $v_2^2$ case 
would have to be $z_1 u^{8k+\{-1,3\}}$ with
degree of $z_1$ equal to -10 = 38, and the $v_2^6$ case would have to
be
$z_2 u^{8k+\{-1,3\}}$ with
degree of $z_2$ equal to -34=14. The $v_2^2$ candidate would
have to be $z_1 = x^2 w \alpha^{3i+2}$, but we have already shown
that this is a permanent cycle representing an element.  On the
other hand, a good choice for $z_2$ is $x^2 $.  Since we know
that for $\C \PP^\infty$ there is an AHSS $d_4$ on $x^2 u^{8k+\{-1,3\}}$, this
must be our choice.  We could track down the $x^7$-torsion elements
in the AHSS, but it isn't necessary.  We have, in the BSS,
\[
d_7(v_2^2 \uhat^{8k}) =
v_2^6 \uhat^{8k}
\qquad
d_7(v_2^6 \uhat^{8k+4}) =
v_2^2 \uhat^{8k+4}
\]
and our $x^7$ torsion is generated as stated.

For odd $n$,
our BSS $E_1$ is 
$
E(2)^*(\C \PP^{2j+1}) = E(2)^*[\uhat]/(\uhat^{2j+2}).
$
Since $E(2)^*(\C \PP^\infty)$ surjects to this, we inherit 
our computation of $d_{1,1}$, $d_{1,2}$ and, consequently,
the entirety of $d_1$.
We can read off the $x$-torsion and $E_2$ as stated.

We also inherit $d_3$ on the first part of the stated $E_3$
and can read off the $x^3$-torsion it gives as well as
its contribution to $E_4$.
The last part of $E_3$ is different.
In order to determine this, we use the map
\[
\C \PP^{2j+1} \lra S^{4j+2}.
\]
In $E(2)^*(-)$, the generator for the sphere, $s_{4j+2}$ maps
to $u^{2j+1}$.  Since we know the generator for the sphere must
be a permanent cycle, so must $u^{2j+1} \in E(2)^*(\C \PP^{2j+1})$.
We have $\uhat = v_2^3 u$, so the permanent cycle is
\[
u^{2j+1} = (v_2^{-3} \uhat)^{2j+1} 
= v_2^{-6j-3} \uhat^{2j+1}
= v_2^{2j-3} \uhat^{2j+1}.
\]
These elements are all there in $E_3$ so we know $d_3$
by naturality from the sphere, which is just like
the coefficients. It is just:
\[
d_3(v_2^2  (v_2^{2j-3} \uhat^{2j+1}))
= \vhat_1 v_2^{-4} v_2^{2j-3} \uhat^{2j+1}
= \vhat_1  v_2^{2j+1} \uhat^{2j+1}.
\]
As a result, the $x^3$-torsion from this part is as stated and
likewise for the contribution to $E_4$.

We only have $d_5$ and $d_7$
left.  The first term is dealt with by a $d_7$ from $\C \PP^\infty$.

We know that $v_2^{2j-3} \uhat^{2j+1}$ is a permanent cycle.  That means
it must be in the image of either $d_5$ or $d_7$.

There are two ways to go about determining this.  We could
compute $ER(2)^*(-)$ for $\C \PP^{2j+1}/\C \PP^{2j-1}$ and look
at the map of BSSs from this to $\C \PP^{2j+1}$.  This is 
similar to what 
the first and third author
did with $\R \PP^{2n}$ in \cite{NituP}.
Or, 
the way we do it here, is to
look at the AHSS to determine which differential it is.

For degree reasons, the only possible candidates for 
the BSS
$d_5$ are:
\[
d_5(v_2^{\{2,6\}}\uhat^{8k+\{0,4\}}) = v_2^{\{1,5\}} \uhat^{8k+\{1,5\}}
\]
and, since $d_5$ commutes with $v_2^4$ (from the coefficients),
\[
d_5(v_2^{\{6,2\}}\uhat^{8k+\{0,4\}}) = v_2^{\{5,1\}} \uhat^{8k+\{1,5\}}
\]
Recall that $v_2^{2j-3} \uhat^{2j+1} = u^{2j+1}$ is a 
BSS
permanent
cycle, so $v_2^5 \uhat^{8k+1} = u^{8k+1}$ and $v_2 \uhat^{8k+5}
= u^{8k+5}$ are 
both BSS and AHSS
permanent cycles
for $n=8k+1$ and $n=8k+5$, respectively.
In the AHSS,
we know that
$d_4(x^2 u^{8k+\{-1,3\}}) = x^5 u^{8k+\{1,5\}}. $ 
Thus, we must have a BSS $d_5$ because $u^{8k+\{1,5\}}$ is $x^5$-torsion.
We get, in the BSS, for purely dimensional
reasons,
that the candidate $d_5$ above is the correct differential.

Our $x^5$-torsion is as stated as well as the $E_6 = E_7$.
The $E_7$
inherits $d_7$ from $\C \PP^\infty$ so our $x^7$ torsion is
is as stated.

We have seen that the only possibilities for $d_5$ occur, so
for $8k+\{3,7\}$, we are left with only $d_7$.
Recall $E_7$ is:
\[
\Zq[v_2^{\pm 4}]\{\uhat^2,
v_2^2 \uhat^{8k+\{2,6\}},
  v_2^{2j+1} \uhat^{8k+\{3,7\}} \} 
\]
The first term is dealt with by a $d_7$ from $\C \PP^\infty$.
The degrees aren't right for $d_7$ to go from the
second term of $E_7$ to the third, so each term must disappear on its
own from a $d_7$.

The third term is easy because it comes directly from the sphere
so we can read off our $x^7$-torsion generators from it.

The second term is easy as well since we can map to the space
$\C \PP^{8k+\{2,6\}}$, where we know the differential on these terms.
We can read off the new $x^7$-torsion from the middle term.
And, of course,  there is the $x^7$-torsion from
$
\Zq  \{\uhat^2\}.
$

All that remains is to prove Theorem \ref{thmone}.
We can just read off all the elements in degrees $16*$
from our theorems and then use Theorem \ref{powers2}
to identify them with powers of $\pontr$.
\end{proof}

\section{The Norm}
\label{secnorm}

This section is devoted to a proof of Theorem \ref{normseq}.
The analogous result for $ER(n)^*(\C \PP^\infty)$ in \cite{Vitaly}
requires significant theory, mainly because of the arbitrary $n$.
However, in our case, with $n=2$, we have explicitly written down
all of the elements in 
$ER(2)^*(B \Zqq)$ 
in Theorem \ref{zqq}, so
our job here is just a matter of chasing these elements around.

We refer the reader to \cite{Vitaly} for all necessary background
on the norm, $N_*$, and its behavior for $\C \PP^\infty$.  
In particular, because $z$ is an element of $ER(2)^*(B \Zqq)$, the
norm commutes with it.  
Following \cite{Vitaly}, Section 7, 
and our Definition \ref{Nres}, the reduction of 
$\text{im}(N_*^{res})$ 
to $E(2)^*(B \Zqq )$ is generated by
\[
\Zp[\vhat_1,v_2^{\pm 2}][[\uhat c(\uhat)]]\{
\uhat + c(\uhat), v_2 (\uhat - c(\uhat),
z(\uhat + c(\uhat)), v_2 z (\uhat - c(\uhat))
\}.
\]
In \cite[Lemma 10.3]{Vitaly},
it is also shown that 
$\text{im}(N_*^{res})$ 
is $x$-torsion.  
This is enough background to get us started with our proof.

We outline our proof as it is somewhat technical.  From
Theorem \ref{zqq}, we know the $x^1$-torsion and 
$\text{im}(N_*^{res})$ 
lies in this $x^1$-torsion.  We first identify this image.
After that, we take what is left of Theorem \ref{zqq}
and rewrite it in terms of $ER(2)^*$, $z$, and $\pontr$.
From there it is easy to see a surjective map of
$ER(2)^*[[\pontr,z]]/(J)$ 
to $ER(2)^*(B \Zqq)/( \text{im}(N_*^{res})$.  Then, all that
remains is to identify these two.

\begin{proof}[Proof of Theorem \ref{normseq}]
We start by computing the associated graded object for
\[
ER(2)^*(B \Zqq)/( \text{im}(N_*^{res})).
\]
Recall that we
have $ER(2)^*(B \Zqq)$ from Theorem \ref{zqq}:

The $x^1$-torsion generators in (the associated graded
object for) $ER(2)^*(B \Zqq) $ are:
\[
\Z/(2^{q-1})[\vhat_1,v_2^{\pm 2},\uhat^2] \{2v_2 \uhat ,
 2 \uhat^2 \}
\quad
\Zq[\vhat_1,v_2^{\pm 2},\uhat^2] \{ 
\vhat_1 \uhat^2,
2^{q-1} \vhat_1 \uhat^3 \}
\]
The $x^3$-torsion generators are:
\[
\Zq[\vhat_1,v_2^{\pm 4}]\{2^{q-1}\vhat_1 \uhat\}
\qquad
\Zq[v_2^{\pm 4},\uhat^2]\{\uhat^4, 2^{q-1}\uhat^5 \}.
\]
The $x^7$-torsion generators are:
$
\Zq[\vhat_2^{\pm 1}]
\{ 2^{q-1} \uhat, \uhat^2 , 2^{q-1}  \uhat^3 \}.
$

We now remove the elements coming from
$\text{im}(N_*^{res})$. 
Modulo powers of $\uhat^2$, we have that
\[
\vhat_1^i v_2^{2j} \uhat^{2k}  v_2 (\uhat - c(\uhat)) \lra 
\vhat_1^i v_2^{2j}  \uhat^{2k} 2 v_2 \uhat.
\]
This is the very first term of the $x^1$-torsion above.

Our next concern is 
\[
\uhat \lra N_*(\uhat) = \xi(\pontr) \lra \uhat + c(\uhat).
\]
By Lemma \ref{lemma}, we have
\[
\uhat + c(\uhat) = \vhat_1 \uhat^2 + \vhat_2 \uhat^4 
\qquad 
\mod (2, \vhat_1^2, \uhat^5).
\]

We have eliminated the first term in our $x^1$-torsion and
now we use the first term above to set $\vhat_1 \uhat^2$ and
$z \vhat_1 \uhat^2 = 2^{q-1} \vhat_1 \uhat^3$ equal to $ 0$.
That reduces our remaining $x^1$-torsion to only:
\[
\Z/(2^{q-1})[v_2^{\pm 2},\uhat^2] \{ 2 \uhat^2 \}
\]
The $x^3$ and $x^7$-torsion remain unaffected since
$\text{im}(N_*^{res})$ is $x^1$-torsion. 
We did not use the term $z v_2 (\uhat - c(\uhat))$, but there is
nothing left in this degree to hit.

To consolidate, the $x^i$-torsion generators of
\[ER(2)^*(B \Zqq)/( \text{im}(N_*^{res}))\]
are described as:
\begin{enumerate}
\item[]
$x^1\mbox{-torsion}\quad 
\Z/(2^{q-1})[v_2^{\pm 2},\uhat^2] \{ 2 \uhat^2 \} $
\item[]
$x^3\text{-torsion}\quad 
\Zq[\vhat_1,v_2^{\pm 4}]\{2^{q-1}\vhat_1 \uhat\}
\quad
\Zq[v_2^{\pm 4},\uhat^2]\{\uhat^4, 2^{q-1}\uhat^5 \} $
\item[]
$x^7\text{-torsion}\quad 
\Zq[\vhat_2^{\pm 1}]
\{ 2^{q-1} \uhat, \uhat^2 , 2^{q-1}  \uhat^3 $\}.
\end{enumerate}

We set $\vhat_2 = 1 = v_2^{-8}$ to simplify our computations.

Having done this, we want to rewrite our answer in terms
of the elements this description represents, namely, in
our associated graded object:
$2v_2^{2i} = \alpha_i$, $\uhat^2 = -\pontr$, $v_2^4 \uhat^4
= \vhat_1 v_2^4 \uhat^2 = w\pontr$, 
$\vhat_1 v_2^4 = w$,
and $z = 2^{q-1} \uhat$.

Now we rewrite 
$ER(2)^*(B \Zqq)/( \text{im}(N_*^{res}))$ as:
\begin{enumerate}
\item[]
$x^1\mbox{-torsion}  \quad 
\Z/(2^{q-1})[\pontr]\{\alpha_i \pontr \} 
\quad 0 \le i < 4$
\item[]
$x^3\text{-torsion}  \quad 
\Zq[\vhat_1]\{\vhat_1 z , w z \}
\quad
\Zq [\pontr]\{\pontr^2, w \pontr, z \pontr^2, z w \pontr \} $
\item[]
$x^7\text{-torsion}  \quad 
\Zq
\{ z , \pontr , z \pontr \}.$
\end{enumerate}

From this description we see that there is a map 
\[
ER(2)^*[[\pontr,z]]
\lra
ER(2)^*(B \Zqq)/( \text{im}(N_*^{res})).
\]
Furthermore, this map must map $(J)$ to zero.
The above analysis 
tells us that the unusual elements 
with the
$\alpha_i $ 
that
must be expressed in higher filtrations can be done so, modulo
the image of $N_*^{res}$, in terms of $ER(2)^*$, $\pontr$, and $z$.
We already knew that this could be done for $2z = \alpha_0 z$, $z^2$, and
$2^q \pontr$. 
This allows us to assert that relations, mod image of $N_*$,
exist like this for $\alpha_{\{1,2,3\}}z$ and $2^{q-1} 
\alpha_{\{1,2,3\}}\pontr$.
These are the relations we use in our Theorem \ref{normseq}.

Taken altogether, we get our map, which is now already obviously surjective:
\[
\frac{ER(2)^*[[\pontr,z]]}{(J)} \lra 
\frac{ER(2)^{*}(B \Zqq)  }{
( \text{im}(N_*^{res}))}  .
\]

All that remains is to show this map is an isomorphism.
We do this by proving an isomorphism on the associated graded objects.
To do that, we analyze the source side of this map.

We begin with $ER(2)^*$ from Fact \ref{ffact}.  We rewrite it as:
\begin{enumerate}
\item[]
$x^1\mbox{-torsion}  \quad 
\Z[\vhat_1]\{\alpha_i\} 
\quad 0 \le i < 4$
\item[]
$x^3\text{-torsion}  \quad 
\Zq[\vhat_1]\{\vhat_1, w \} $
\item[]
$x^7\text{-torsion}  \quad 
\Zq\{1 \}$
\end{enumerate}

We continue with our filtration and associated graded object.
First, we make everything free over this on $\pontr^i$ and
$\pontr^i z$.  We can make $  z^2 = 0$ using our filtration.
Likewise, $2^{q-1} \alpha_i \pontr = 0 = \alpha_i z$.  Writing
this down, we will have used every relation except $\xi(\pontr)=0$.
What we have at this stage is:
\begin{enumerate}
\item[]
$x^1\mbox{-torsion}  \quad 
\Z/(2^{q-1})[\vhat_1, \pontr]\{\alpha_i \pontr\}
\quad 0 \le i < 4$
\item[]
$x^3\text{-torsion}  \quad 
\Zq[\vhat_1,\pontr]\{\vhat_1 \pontr, w  \pontr, \vhat_1 z , w z\} $
\item[]
$x^7\text{-torsion}  \quad 
\Zq[\pontr] \{\pontr, z \}$
\end{enumerate}

Next we take out the first term of $\xi(\pontr)$, i.e. $\vhat_1 \pontr$.
This is a bit different from before
when the   
$\text{im}(N_*^{res})$
was a submodule of $ER(2)^*(B \Zqq)$.
In $(J)$, we are taking out $\xi(\pontr)$ as part of an ideal.
We end up with:
\begin{enumerate}
\item[]
$x^1\mbox{-torsion}  \quad 
\Z/(2^{q-1})[ \pontr]\{\alpha_i \pontr\}
\quad 0 \le i < 4$
\item[]
$x^3\text{-torsion}  \quad 
\Zq[\vhat_1]\{ \vhat_1 z , w z\} 
\quad
\Zq[\pontr]\{ w  \pontr,  w z \pontr\} $
\item[]
$x^7\text{-torsion}  \quad 
\Zq[\pontr] \{\pontr, z \}$
\end{enumerate}

To get this in the same form
as $ER(2)^*(B \Zqq)/ ( \text{im}(N_*^{res})),  $
there is just one last step.
The element $\pontr^2$ that seems
to be $x^7$-torsion is the same, mod higher filtrations, as
$\vhat_1 \pontr$  
(recall $\xi(\pontr)$ starts off as
$\vhat_1 \pontr + \vhat_2 \pontr^2$ mod $(2)$).
This is 
$x^3$-torsion, so we should only
have $\Zq \{z, \pontr, z \pontr \}$ left as $x^7$-torsion and we should
have $\Zq [\pontr]\{\pontr^2, z\pontr^2 \}$ as $x^3$-torsion.  

This shows we have an isomorphism of associated graded objects
and completes the proof.

\end{proof}

\section{Appendix}
\label{appendix}

Because we use this table all the time, it should be available for
general reference.  Here is $ER(2)^*$, written in its 48-periodic
form, with $k \ge 0$ and $\alpha^0 = 1$.

\[
\begin{array}{llllllllll}
15 & x \alpha^{3k+2} &  31 &  x \alpha^{3k}    & 47 & x \alpha^{3k+1}  \\
14 & x^2 \alpha^{3k} &  30 &  x^2 \alpha^{3k+1}    & 46 & x^2 \alpha^{3k+2}  \\
13 &  &  29 &    & 45 & x^3   \\
12 & \alpha_3 \alpha^{3k}  &  28 & x^4, \   \alpha_3 \alpha^{3k+1}  & 44 & 
\alpha_3 \alpha^{3k+2}   \\
11 & x^5 &  27 &     & 43 &    \\
10 &  &  26 &     & 42 & x^6   \\
9 &  &  25 &     & 41 &    \\
8 & w \alpha^{3k+1} &   
24 & \alpha_2, \   \ w\alpha^{3k+2}   &   
40 & w \alpha^{3k}    \\
  &  2w\alpha^{3k+1} = \alpha_2 \alpha^{3k+2}&  
&     2w \alpha^{3k+2} =  \alpha_2 \alpha^{3k+3}  & 
&   2w\alpha^{3k} = \alpha_2 \alpha^{3k+1}  \\
7 & xw \alpha^{3k+2} &  23 & xw \alpha^{3k} & 39 & xw \alpha^{3k+1}  \\
6 & x^2 w \alpha^{3k} &  22 & x^2 w \alpha^{3k+1} & 38 & x^2 w \alpha^{3k+2} \\
5 & &  21 &  & 37 & \\
4 & \alpha_1 \alpha^{3k+1}  &  20 & \alpha_1 \alpha^{3k+2}  & 36 & 
\alpha_1 \alpha^{3k} \\
3 & &  19 &  & 35 &  \\
2 & &  18 &  & 34 &  \\
1 & &  17 &  & 33 &  \\
0 & \alpha^{3k} &  16 & \alpha^{3k+1}  & 32 & \alpha^{3k+2}  \\
\end{array}
\]


\end{document}